\newtheorem{fact}[theorem]{Fact}
\newcommand{\prob}{\mathbb{P}}
\newcommand{\expect}{\mathbb{E}}
\begin{document}

\title{
A sharp threshold for a modified bootstrap percolation with recovery}

\titlerunning{Bootstrap percolation with recovery}

\author{Tom Coker \and
	Karen Gunderson}
	
\authorrunning{T. Coker and K. Gunderson}

\institute{
	T. Coker \at
	Department of Pure Mathematics and Mathematical Statistics, 
	University of Cambridge, UK
	 \and 
	 K. Gunderson \at
	 Department of Mathematical Sciences,
	University of Memphis, Memphis, TN 38152, USA\\
	  \emph{Present address: }
	 Heilbronn Institute for Mathematical Research, School of Mathematics, University of Bristol, Bristol BS8 1TW, UK.\\
	 \email{karen.gunderson@bristol.ac.uk}}
	 
\date{Received: date / Accepted: date}
% The correct dates will be entered by the editor

\maketitle

\begin{abstract}
Bootstrap percolation is a type of cellular automaton on graphs, introduced as a simple model of the dynamics of ferromagnetism.  Vertices in a graph can be in one of two states: `healthy' or `infected' and from an initial configuration of states, healthy vertices become infected by local rules. While the usual bootstrap processes are monotone in the sets of infected vertices, in this paper, a modification is examined in which infected vertices can return to a healthy state.  Vertices are initially infected independently at random and the central question is whether all vertices eventually become infected.  The model examined here is such a process on a square grid for which healthy vertices with at least two infected neighbours become infected and infected vertices with no infected neighbours become healthy.  Sharp thresholds are given for the critical probability of initial infections for all vertices eventually to become infected.
\keywords{cellular automaton \and bootstrap percolation \and sharp threshold}
% \PACS{PACS code1 \and PACS code2 \and more}
 \subclass{60K35 \and 68Q80}
\end{abstract}

\section{Introduction}\label{S:intro}

Bootstrap percolation is a type of cellular automata on graphs in which vertices, often called `sites', can be in one of two possible states: `infected' or `uninfected' and the states are updated according to a local rule depending on their neighbourhoods.  Starting from a random configuration of infected and healthy vertices, the states of vertices are updated repeatedly in discrete time steps and one would like to know, in terms of the density of the initial infection, whether is it likely or unlikely that every vertex is eventually infected.  A common type of bootstrap process is the $r$-neighbour bootstrap process, where infected vertices remain infected forever and healthy vertices with at least $r$ infected neighbours become infected.  Bootstrap percolation was introduced by Chalupa, Leath, and Reich \cite{CLR79} as a monotone model for certain physical processes.  In this paper, a non-monotone modification of the $r$-neighbour bootstrap process is considered.  In particular, a sharp threshold for total infection is given for the process on a square grid where healthy vertices with at least $2$ infected neighbours become themselves infected and infected vertices with no infected neighbours become healthy, though they are susceptible to being re-infected at a later time.

%%% Monotone bootstrap
For either the $r$-neighbour bootstrap rule or its modification described here, a set of initially infected vertices is said to \emph{percolate} if, in the sequence of state updates, every vertex in the graph is eventually infected and stays infected.  As configurations of infected states often seem difficult to describe, a typical case is considered in which vertices are infected at random, independently with some fixed probability.  The question of interest is, for which values for the initial infection probability percolation is more likely than not.  For the $r$-neighbour bootstrap process on a graph $G$, given initially infected sets, $X_0$, chosen independently at random with probability $p$, the \emph{critical probability} is defined as
\[
p_c(G, r) = \inf\{p \mid \mathbb{P}_p(X_0 \text{ percolates in $r$-nbr boots.}) \geq 1/2\}.
\]

The behaviour of the $r$-neighbour bootstrap processes has been well-studied for integer lattices in various dimensions, as well as finite subgraphs of these.  For the infinite integer lattices, $\mathbb{Z}^d$, it was shown by van Enter \cite{avE87} and Schonmann \cite{rS92} that the critical probabilities are all either $0$ or $1$.  For finite grids in $2$ dimensions, $[1,n]\times [1,n]$, Aizenman and Lebowitz \cite{AL88} first noted metastability effects of $2$-neighbour bootstrap percolation and gave bounds on the critical probability. A sharp bound for the critical probability was given by Holroyd \cite{aH03} who proved that for $n \in \mathbb{N}$, $p_c([n]^2,2) = (\pi^2/18 +o(1))/\log n$.  Subsequently, an even sharper result was given by Gravner, Holroyd and Morris \cite{GHM10}.  For grids in higher dimensions, sharp bounds on the critical probability were given by Balogh, Bollob\'{a}s and Morris \cite{BBM09b} and by Balogh, Bollob\'{a}s, Duminil-Copin and Morris \cite{BBDCM12}.

%%%% Recovery update rule

A useful feature in the analysis of $r$-neighbour bootstrap percolation is that once a site becomes infected, it remains infected.  There are, however, a number of closely related models where the set of `infected' vertices is not monotone, with respect to containment, over time.  Schonmann \cite{rH90} gave bounds on the critical probability for the biased majority rule on finite integer lattices, where at each time step, vertices change their state to match the majority of their neighbours, with the tie-break biased towards infection. Balister, Bollob\'{a}s, Johnson and Walters \cite{BBJW10} considered a random majority percolation which combined a majority  update rule together with a random perturbation, using techniques of the monotone bootstrap processes.  Fontes, Schonmann and Sidoravicius \cite{FSS02} and later Morris \cite{rM09} used bootstrap percolation to give bounds on critical probabilities related to the low-temperature Glauber dynamics on $\mathbb{Z}^d$.

Here, an update rule is considered which has the property, as with the majority-type rules, that vertices with few infected neighbours will become uninfected.  While some vertices can repeatedly become infected and uninfected in this process, many of the techniques used to produce sharp thresholds for the probability of percolation on finite grids can be adapted to give sharp thresholds for this model also.

\begin{definition}
Let $n \in \mathbb{N}$ and $X \subseteq [n]^2$.  Define the \emph{recovery update rule}  as follows,
\begin{equation}\label{E:modrule}
\mathcal{R}(X) = \{\mathbf{x} \in X:\ |(N(\mathbf{x})\cup \{\mathbf{x}\}) \cap X| \geq 2\}.
\end{equation}
It is sometimes helpful to compare the effect of the process $\mathcal{R}$ to that of $2$-neighbour bootstrap.  Define, for the $2$-neighbour bootstrap update and any $X \subseteq [n]^2$,
\[
\mathcal{B}(X) = X \cup \{\mathbf{x} \mid |N(\mathbf{x}) \cap X| \geq 2\}.
\]
\end{definition}

From a configuration of initially infected sites, those healthy vertices with at least $2$ infected neighbours become infected under $\mathcal{R}$, but in contrast to usual bootstrap percolation, infected sites with no infected neighbours become un-infected, or `recover'.  This occurs simultaneously for all sites and the process is repeated.

Given $X_0 \subseteq [n]^2$, recursively define, for each $t \geq 0$, $X_{t+1} =\mathcal{R}(X_t)$.  As in the $2$-neighbour bootstrap process, the set $X_0$ is said to \emph{percolate with respect to $\mathcal{R}$} if there is $t_{0}$ such that $\mathcal{R}^{(t_0)}(X_0) =X_{t_0}= [n]^2$.  Unless otherwise specified, in this paper, a set will be said to percolate if it percolates in the process $\mathcal{R}$.  

For any set $A \subseteq [n]^2$, the notation $X \sim \operatorname{Bin}(A,p)$ will be used to denote a random configuration $X \subseteq A$ with each element of $A$ included in $X$ independently with probability $p$.  Define the critical probability function for $\mathcal{R}$ by
\[
p_c([n]^2, \mathcal{R}) = \inf\{p \mid \prob_p(X_0 \text{ percolates}) \geq 1/2\}.
\]

Unlike the usual $2$-neighbour bootstrap update rule, the sequence of configurations $X_0, \mathcal{R}(X_0), \mathcal{R}^{(2)}(X_0), \ldots$ is not, in general, monotone with respect to set containment. For example, if the grid is initially infected with a checkerboard pattern, the the sets $(X_t)_{t\geq 0}$ alternate between $X_0$ and $[n]^2 \setminus X_0$.  For this reason, it might not make sense to talk about the span of a set of infected sites in the recovery process, $\mathcal{R}$.  In the context of the recovery bootstrap process, the span of a set of vertices in $\mathcal{B}$ will occasionally be considered.

If an infected site $\mathbf{x} \in X_0$ has a neighbour in $X_0$, then both $\mathbf{x}$ and its neighbour remain infected in all subsequent sets $X_t$.  However, if $\mathbf{x}$ shares a corner with an infected site, then for every $t\geq 0$, $\mathbf{x} \in X_{2t}$, but it might be the case that $\mathbf{x} \notin X_{2t+1}$.  

%\begin{figure}[h]
%	\begin{center}
%	\begin{tikzpicture}[line width=1.5pt]
%	\draw[step=0.8cm] (-1.2,-1.2) grid (1.2,1.2);
%	\filldraw[fill=gray] (-0.8,-0.8) rectangle (0,0)
%		rectangle (0.8,0.8);
%	\draw (-0.4,-0.4) node {$\mathbf{x}$};
%	\draw (0, -1.6) node {$X_{2t}$};
%	
%	\draw[step=0.8cm] (2,-1.2) grid (4.4,1.2);
%	\filldraw[fill=gray] (2.4,0.8) rectangle (3.2,0)
%					rectangle (4,-0.8);
%	\draw (2.8, -0.4) node {$\mathbf{x}$};
%	\draw (3.2, -1.6) node {$X_{2t+1}$};
%	
%\end{tikzpicture}
%
%	\end{center}
%		\caption{Sites whose infection status alternates}
%		\label{F:flashing}
%\end{figure}

In general, the configurations of infected sites that percolate with respect to the usual $2$-neighbour bootstrap process need not percolate with respect to the recovery update rule $\mathcal{R}$.  In fact, there are configurations of infected sites which percolate with respect to $\mathcal{B}$ but for which all vertices become uninfected in the recovery process $\mathcal{R}$.

In this paper, sharp bounds on the critical probability for the recovery update rule $\mathcal{R}$ are determined, together with estimates on the probability of percolation.

These results are proved by considering a closely related process where isolated vertices, which disappear after one time step in $\mathcal{R}$, do not occur to begin with.  Given a fixed $p$ and $n$, consider infecting each pair of sites in $[1,n]^2$ that either share an edge or a corner, as in Figure \ref{F:2tiles}, with probability $p^2$.  Such pairs of sites are called \emph{$2$-tiles} and the probability of overlapping $2$-tiles when sites are infected independently at random is of order $p^3$, which is much smaller than $p^2$ for the values of $p$ considered here.  These $2$-tiles are well-behaved in the $\mathcal{R}$ process since an infected vertex in a tile that becomes uninfected will become re-infected at the following time-step.  By adapting the methods used for analyzing critical probabilities, for example by Balogh and Pete \cite{BP98}, one could show that the critical probability for such a process is $\Theta((\log n)^{-1/2 +o(1)})$.  In this paper, we in fact prove sharp results about the behaviour of $\mathcal{R}$ on $2$-tiles.

Using an approach similar to that used for $r$-neighbour bootstrap processes (see, e.g. \cite{aH03,BBM09b,BBDCM12}), the probability of the growth of the process is described in terms of an implicitly defined function arising from a careful recursion on the probability that there is a pair of adjacent columns in a rectangle with no site infected.  For each $u \in (0,1)$, $\beta(u)$ is defined to be the largest real root of the function $F(u, x) = x^3 - (1-u^4)x^2-u^4(10u^4)x - u^8(1-u^3)$.  Define the constant
\[
\lambda_{\mathcal{R}} = \int_{0}^{\infty} -\log(\beta(e^{-x}))\ dx.
\]
The critical probability for percolation of a random configuration of infected $2$-tiles is given by the following theorem.

\begin{theorem}\label{T:crit-p-tiles}
Let $\varepsilon > 0$ and $\{p(n)\}_{n \in \mathbb{Z}^+} \subseteq (0,1)$.  For each $n$, let $X_{\text{tiles}}(n)$ be a random configuration of $2$-tiles in $[1,n]^2$ with each tile infected independently at random with probability $p(n)$.  In the $\mathcal{R}$-process,
\[
\prob(X_{\text{tiles}}(n) \text{ percolates}) = 
	\begin{cases}
		 o(1) &\text{if } p(n)^2 < \frac{\lambda-\varepsilon}{\log n}\\
		 1-o(1)	&\text{if } p(n)^2 > \frac{\lambda+\varepsilon}{\log n}.
	\end{cases}
\]
\end{theorem}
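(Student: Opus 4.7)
The plan is to follow the now-classical approach of Holroyd \cite{aH03} and Balogh--Bollob\'as--Morris \cite{BBM09b} for sharp thresholds in $2$-neighbour bootstrap percolation, adapted to accommodate the recovery dynamics of $\mathcal{R}$ restricted to $2$-tile configurations. The central object is an \emph{internally spanned rectangle}: a rectangle $R \subseteq [n]^2$ such that the $\mathcal{R}$-process, when restricted to the $2$-tiles inside $R$, eventually makes every site of $R$ stably infected. Since the two sites of a $2$-tile each have an infected neighbour, a $2$-tile is stable under $\mathcal{R}$; this is what allows the analysis to proceed by a local growth argument close in spirit to that used for $\mathcal{B}$, with extra care taken because sites can still temporarily recover while the droplet is being built.

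For the \textbf{upper bound}, when $p^2 \log n > \lambda_{\mathcal{R}} + \varepsilon$, I would partition $[n]^2$ into $\Theta(n^2/k^2)$ disjoint ``seed squares'' of a moderate side length $k = k(n)$ (tending to infinity slowly), and show that with high probability at least one seed contains a small internally spanned rectangle. The key step is then to show that this seed grows under $\mathcal{R}$ to cover $[n]^2$. Growth by one column requires that the column to the right of the current droplet of height $b$ is not ``bad''; the probability $g_b(u)$ that it is bad (where $u = 1 - p^2$, say) satisfies a $3$-term linear recurrence in $b$ whose characteristic polynomial is exactly $F(u, \cdot)$, so $g_b(u) \sim C(u)\beta(u)^b$. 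Summing $-\log(1 - g_b)$ and substituting $b \mapsto x/p^2$ turns the total into a Riemann sum for the integral $\int_0^\infty -\log\beta(e^{-x})\,dx = \lambda_{\mathcal{R}}$, giving growth probability at least $\exp\bigl(-(\lambda_{\mathcal{R}} + o(1))/p^2\bigr)$. Since we have $\Theta(n^2/k^2)$ independent seed trials and $(\lambda_{\mathcal{R}} + o(1))/p^2 < (1 - \delta)\log n$, standard Borel--Cantelli / union-bound reasoning gives percolation whp.

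For the \textbf{lower bound}, when $p^2 \log n < \lambda_{\mathcal{R}} - \varepsilon$, I would invoke an Aizenman--Lebowitz style lemma: if $[n]^2$ is percolated then at every scale $\ell \in [k(n), n]$ there exists an internally spanned rectangle of side within a constant factor of $\ell$. The probability that a specific $a \times b$ rectangle $R$ is internally spanned is bounded by the probability that $R$ has neither a pair of adjacent fully empty columns nor a pair of adjacent fully empty rows, because such an obstruction blocks the $\mathcal{R}$-growth inside $R$. Letting $\gamma_m(u)$ be the probability that an $m$-column strip of fixed height contains no pair of adjacent empty columns, one obtains the same $3$-term recurrence as for the upper bound, so $\gamma_m(u) \sim C'(u)\beta(u)^m$. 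Combining horizontal and vertical contributions and summing over the dyadic scales yields an internal-spanning probability at most $\exp\bigl(-(\lambda_{\mathcal{R}} - o(1))/p^2\bigr)$; a union bound over the at most $n^4$ possible rectangles then shows that with $p^2 \log n < \lambda_{\mathcal{R}} - \varepsilon$ no such internally spanned rectangle of the critical scale exists whp, contradicting percolation.

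The \textbf{main obstacle} I expect is the derivation of the cubic $F(u, x) = x^3 - (1 - u^4)x^2 - 10 u^8 x - u^8(1 - u^3)$ and the verification that the resulting $\beta(u)$ controls both directions of the argument with matching constants. The coefficient $10\,u^8$ in the middle term strongly suggests a case enumeration of the small but nontrivial number of $2$-tile configurations that can bridge three consecutive columns while avoiding the creation of isolated (hence self-recovering) vertices, and getting this recursion correct with exact coefficients is the combinatorial heart of the proof. A secondary technical difficulty is making the upper and lower rectangle-spanning estimates tight to the \emph{same} constant $\lambda_{\mathcal{R}}$; this demands that the ``adjacent pair of empty columns'' obstruction is not merely necessary but essentially sufficient for the rectangle to fail to be internally spanned under $\mathcal{R}$, which is where the non-monotonicity of $\mathcal{R}$ requires the most careful handling.
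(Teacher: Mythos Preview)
Your overall architecture is right and matches the paper's: derive the crossing-probability recursion that yields $\beta(u)$, then feed it into Holroyd's machinery. But you are working harder than necessary, and your sketch of the lower bound would not actually produce the sharp constant.

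The key simplification you are missing is that on $2$-tile configurations the $\mathcal{R}$-process and the ordinary $2$-neighbour bootstrap process $\mathcal{B}$ have \emph{identical} traversability. Every site in a union of $2$-tiles has an $\ell_\infty$-neighbour in the configuration, so although a diagonal tile oscillates (it is not literally fixed, contrary to what you wrote), no infected mass is ever lost, and a rectangle is $\mathcal{R}$-traversable by $X_{\text{tiles}}$ iff it is $\mathcal{B}$-traversable by the same set of sites. All of your worries about ``avoiding the creation of isolated (hence self-recovering) vertices'' and about ``the non-monotonicity of $\mathcal{R}$ requiring the most careful handling'' therefore evaporate for \emph{this} theorem. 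Once the traversal estimate (Lemma~\ref{L:crossing}) is in hand, Holroyd's proof in \cite{aH03} applies verbatim with the new $g$ and $\lambda$; that is exactly how the paper disposes of Theorem~\ref{T:crit-p-tiles}.

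Two further corrections. First, the recursion is over the number of columns $m$, with $u=(1-p^2)^h$ fixed by the height, not over the height $b$ as you wrote. The three cases are simply: last column occupied; last column unoccupied but penultimate occupied; last two columns unoccupied but the one before $2$-occupied. This gives
\[
A_m=(1-u^4)A_{m-1}+u^4(1-u^4)A_{m-2}+u^8(1-u^3)A_{m-3},
\]
so the middle coefficient of $F_u$ is $u^4(1-u^4)$; the ``$10u^4$'' you are puzzling over is a typo in the introduction, and there is no exotic case enumeration behind it. Second, and more seriously, your proposed lower-bound estimate --- bounding $\prob(R\text{ internally spanned})$ by the probability that $R$ has no horizontal \emph{and} no vertical double gap, then ``summing over dyadic scales'' --- yields only the Aizenman--Lebowitz order of magnitude, not the sharp constant $\lambda_{\mathcal{R}}$. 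Getting the matching constant requires Holroyd's hierarchy argument (the variational lower bound on $\sum_{u\to v}U(R_v,R_u)$ via the pod lemma), which is precisely what the paper invokes when it says Holroyd's results can be applied directly with the function $g$ of equation~\eqref{E:betag}.
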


What remains is the task of showing that the process where single sites are infected independently at random can be approximated by the process on independent $2$-tiles.  A central challenge is the lack of independence between nearby sites after one step in the $\mathcal{R}$-process, after which any configuration consists of $2$-tiles.  When considering both upper and lower bounds on the probability of percolation in the $\mathcal{R}$-process, the initially infected set is altered to create a set on which the rule $\mathcal{R}$ is nearly monotone and for which the probability of percolation has not changed too much. This alteration is done in different ways for each case, to obtain the main result of this paper.

\begin{theorem}\label{T:mainthm}
There exists a constant $\lambda_{\mathcal{R}} >0$ such that for every $\varepsilon >0$, and sequence $\{p(n)\}_{n \in \mathbb{Z}^+} \subseteq (0,1)$, in the bootstrap process with recovery, $\mathcal{R}$,
\[
\mathbb{P}_{p(n)}(X_0\ \mathcal{R}\text{-percolates}) = 
\begin{cases}
	1-o(1)	&\text{if } p(n) > \sqrt{\frac{\lambda_{\mathcal{R}} + \varepsilon}{\log n}}\\
	o(1)		&\text{if } p(n) < \sqrt{\frac{\lambda_{\mathcal{R}} - \varepsilon}{\log n}}.
\end{cases}
\]
\end{theorem}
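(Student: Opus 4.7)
The plan is to deduce Theorem \ref{T:mainthm} from Theorem \ref{T:crit-p-tiles} by coupling the site-percolation process to the independent-tile process in each direction, exploiting the crucial monotonicity of the one-step map $\mathcal{R}$: although the orbit $(X_t)_{t \geq 0}$ of a single set is not monotone in $t$, we do have $\mathcal{R}(X) \subseteq \mathcal{R}(Y)$ whenever $X \subseteq Y$, and therefore $\mathcal{R}^{(t)}(X) \subseteq \mathcal{R}^{(t)}(Y)$ for every $t \geq 0$. Throughout write $\lambda = \lambda_{\mathcal{R}}$ and $X_0 \sim \operatorname{Bin}([n]^2, p)$.

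For the upper bound, suppose $p^2 > (\lambda + \varepsilon)/\log n$. I would extract from $X_0$ a subset $Y_0 \subseteq X_0$ consisting of vertex-disjoint orthogonal $2$-tiles whose joint distribution dominates an independent tile-percolation of density $p^2(1 - o(1))$. One natural route is to tile $[n]^2$ with well-separated blocks of slowly growing side $k(n)$ with $k^2 p = o(1)$, and within each block search for a specific horizontal $X_0$-tile that is disjoint from the rest of $X_0$; the block events are independent, each occurs with probability $\Theta(k^2 p^2)(1 - o(1))$, and a suitable rescaling and comparison recovers an effective tile-density of $p^2(1 - o(1))$ on the block lattice, to which Theorem \ref{T:crit-p-tiles} applies. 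Percolation of $Y_0$ then transfers to $X_0$ by monotonicity of $\mathcal{R}$.

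For the lower bound, suppose $p^2 < (\lambda - \varepsilon)/\log n$. A set-theoretic domination $X_0 \subseteq Z_0$ by a tile-percolation of density $p^2$ is impossible, since $X_0$ already has vertex density $p \gg p^2$, so the comparison must be carried out after one step of $\mathcal{R}$. Every site of $X_1 := \mathcal{R}(X_0)$ arises either from a surviving $X_0$-vertex, which is automatically part of an $X_0$-tile, or from a new infection adjacent to two $X_0$-vertices, which can be enlarged to a $2$-tile by adjoining one further site. I would accordingly construct $Z_1 \supseteq X_1$ as a canonical union of $2$-tiles and couple $Z_1$ to an independent tile-percolation of density $p^2(1 + o(1))$, using the overlap bound flagged in the introduction (the probability that a given vertex lies in two distinct such tiles is $O(p^3) = o(p^2)$) to show that overlaps contribute negligibly. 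Theorem \ref{T:crit-p-tiles} then gives non-percolation of $Z_1$ with high probability, and monotonicity yields $\mathcal{R}^{(t)}(X_0) = \mathcal{R}^{(t-1)}(X_1) \subseteq \mathcal{R}^{(t-1)}(Z_1)$, hence non-percolation of $X_0$.

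The main obstacle is the lower-bound coupling: $X_1$ is not literally a disjoint union of $2$-tiles, since a new infection at a site sandwiched between two opposite $X_0$-neighbours can be isolated in $X_1$, so the construction of $Z_1$ and the coupling to independent tiles require careful bookkeeping to ensure the tile-density matches $p^2$ \emph{sharply} rather than just up to constants. The upper bound is routine by comparison once one commits to working with well-separated blocks; in both directions the $\pm\varepsilon$ slack in Theorem \ref{T:mainthm} is what absorbs the $(1 \pm o(1))$ corrections introduced by the couplings, so the constant $\lambda_{\mathcal{R}}$ inherited from Theorem \ref{T:crit-p-tiles} serves as the threshold constant for the full recovery process.
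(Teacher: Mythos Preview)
Your proposal rests on reducing Theorem~\ref{T:mainthm} to Theorem~\ref{T:crit-p-tiles} via global stochastic domination between the site process and the independent-tile process. This is not the paper's route, and the reduction has a genuine gap in the non-percolation direction.

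For the lower bound you want to embed $X_1=\mathcal{R}(X_0)$ (or a slight enlargement $Z_1$) into an independent tile field of density $p^2(1+o(1))$. But the tile-occupation indicators $\xi_T=\mathbf{1}\{T\subseteq X_0\}$ are \emph{positively} correlated: for two tiles $T_1,T_2$ sharing a site one has $\prob(\xi_{T_1}=\xi_{T_2}=1)=p^3\gg p^4$. Hence the law of $(\xi_T)$ is \emph{not} stochastically dominated by the product Bernoulli$(p^2)$ field, nor by Bernoulli$(p^2(1+o(1)))$; the $O(p^3)$ overlap heuristic you cite tells you triples are rare, but it does not manufacture the required coupling. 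The same positive correlation persists after one step of $\mathcal{R}$ and after any local ``tiling'' of $Z_1$, so the appeal to Theorem~\ref{T:crit-p-tiles} as a black box cannot be completed this way. Your upper-bound sketch has a milder but real issue: the block construction yields at most one tile per $k\times k$ block, which is a tile field of density $O(k^{-2})$ on $[n]^2$, not $p^2(1-o(1))$; the ``rescaling'' you invoke would have to relate $\mathcal{R}$-percolation on the block lattice to $\mathcal{R}$-percolation on $[n]^2$, and no such comparison is available.

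The paper proceeds in the opposite logical direction: the detailed analysis done for Theorem~\ref{T:mainthm} is what yields Theorem~\ref{T:crit-p-tiles}, not conversely. Rather than coupling globally, the paper compares site and tile processes only at the level of individual \emph{crossing probabilities} for rectangles of controlled aspect ratio. For the supercritical half it replaces $X_0$ by $X_0^-$ (sites with an $\ell_\infty$-neighbour in $X_0$), restricts to configurations with no ``triples'' and bounded size, and shows $\prob(X_0^-=A)\ge e^{-O(hmp^{5/2})}\prob_2(X_{\text{tiles}}=A)$ pointwise on this good set (Lemma~\ref{L:crossing_P1}); this feeds into a direct growing-square argument (Lemmas~\ref{L:span_small_L}--\ref{L:span_big_L}, Theorem~\ref{T:lb}). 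For the subcritical half it defines a different modification $X_0^+$ handling the ``sandwich'' configurations you flag via explicit \emph{triplets}, obtains the reverse pointwise comparison with a multiplicative error $1+O(p)$ on traversal probabilities (Lemma~\ref{L:cross-upper}), and then reruns the full Holroyd hierarchy machinery (Lemmas~\ref{L:D(R,R')}--\ref{L:pods}, Theorem~\ref{T:ubspan}) against the function $g$ coming from the tile recursion. The $\varepsilon$-slack is absorbed not by a single coupling error but by these per-rectangle multiplicative corrections accumulated over the hierarchy.
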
 

The probabilities of percolation for each of the two ranges of values for $p(n)$ in Theorem \ref{T:mainthm} give the following immediate formulation for the critical probability for the recovery bootstrap update rule.

\begin{corollary}\label{cor:crit-prob}
For all $n \in \mathbb{N}$,
\[
p_c(n, \mathcal{R}) = \sqrt{\frac{\lambda_{\mathcal{R}} }{\log n}}+ \frac{o(1)}{\sqrt{\log n}}.
\] 
\end{corollary}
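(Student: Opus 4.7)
The plan is to derive Corollary \ref{cor:crit-prob} directly from Theorem \ref{T:mainthm} by unpacking the definition $p_c(n, \mathcal{R}) = \inf\{p : \mathbb{P}_p(X_0 \text{ percolates}) \geq 1/2\}$. Writing $q_\varepsilon^\pm(n) := \sqrt{(\lambda_{\mathcal{R}} \pm \varepsilon)/\log n}$, the strategy is to show that for every $\varepsilon > 0$ one has $q_\varepsilon^-(n) \leq p_c(n, \mathcal{R}) \leq q_\varepsilon^+(n)$ for all sufficiently large $n$, after which the claimed asymptotic follows by letting $\varepsilon \to 0$.

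For the upper bound I would apply the first branch of Theorem \ref{T:mainthm} (with parameter $\varepsilon/2$) to the choice $p(n) = q_\varepsilon^+(n)$: since $q_\varepsilon^+(n) > \sqrt{(\lambda_{\mathcal{R}} + \varepsilon/2)/\log n}$, the hypothesis is met, giving $\mathbb{P}_{p(n)}(X_0 \text{ percolates}) = 1 - o(1)$. In particular this probability is at least $1/2$ for $n$ large, so by definition of the infimum, $p_c(n, \mathcal{R}) \leq q_\varepsilon^+(n)$ on that tail.

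The lower bound is the only slightly subtle step, because the $\mathcal{R}$-process is not monotone in the initial set and the percolation probability is therefore not a priori monotone in $p$. I would argue by contradiction using a diagonalisation: assume that for some $\varepsilon > 0$ there is an increasing sequence $(n_k)$ with $p_c(n_k, \mathcal{R}) < q_\varepsilon^-(n_k)$. By definition of the infimum, for each $k$ there exists $\hat p_k < q_\varepsilon^-(n_k)$ with $\mathbb{P}_{\hat p_k}(X_0 \text{ percolates in } [n_k]^2) \geq 1/2$. Glue these into a full sequence $(p(n))_{n \geq 1}$ by setting $p(n_k) = \hat p_k$ and, for the remaining $n$, $p(n) = q_\varepsilon^-(n)/2$; then $p(n) < q_\varepsilon^-(n)$ for every $n$, so the second branch of Theorem \ref{T:mainthm} forces $\mathbb{P}_{p(n)}(X_0 \text{ percolates}) = o(1)$. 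This contradicts the fact that along the subsequence $(n_k)$ these probabilities are bounded below by $1/2$, so no such $\varepsilon$ and $(n_k)$ can exist.

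Combining, for each $\varepsilon \in (0, \lambda_{\mathcal{R}})$ and all large $n$, $p_c(n, \mathcal{R}) \in [q_\varepsilon^-(n), q_\varepsilon^+(n)]$. Factoring gives $q_\varepsilon^\pm(n) = \sqrt{\lambda_{\mathcal{R}}/\log n}\cdot\sqrt{1 \pm \varepsilon/\lambda_{\mathcal{R}}}$, and since $|\sqrt{1 \pm t} - 1| \to 0$ as $t \to 0$, this yields $\sqrt{\log n}\bigl(p_c(n,\mathcal{R}) - \sqrt{\lambda_{\mathcal{R}}/\log n}\bigr) \to 0$, which is exactly the desired statement. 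The main obstacle throughout is the non-monotonicity in $p$, but the diagonalisation above sidesteps it using nothing beyond the two-sided statement of Theorem \ref{T:mainthm}.
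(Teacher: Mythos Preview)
Your derivation is correct and follows the same route as the paper, which simply states the corollary as immediate from Theorem~\ref{T:mainthm} (and, at the end of Section~\ref{S:ub}, spells out the two inequalities $p_c \ge \sqrt{(\lambda-\varepsilon)/\log n}$ and $p_c \le \sqrt{(\lambda+\varepsilon)/\log n}$ for large $n$).

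The one place you take a genuinely different detour is the lower bound, where you introduce a diagonalisation because you believe $p\mapsto \mathbb{P}_p(X_0\text{ percolates})$ need not be monotone. In fact it is: the non-monotonicity discussed in the paper is in \emph{time}, not in the initial set. The map $X\mapsto\mathcal{R}(X)=\{\mathbf{x}:|(N(\mathbf{x})\cup\{\mathbf{x}\})\cap X|\ge 2\}$ is monotone under inclusion, so $X_0\subseteq Y_0$ implies $\mathcal{R}^{(t)}(X_0)\subseteq\mathcal{R}^{(t)}(Y_0)$ for every $t$, and hence percolation is an increasing event. The paper uses exactly this (``by coupling, if $p'<p$ then $I(n,p')\le I(n,p)$'') inside the proofs of Theorems~\ref{T:lb} and~\ref{T:ublarge}. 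With monotonicity in hand, the lower bound is immediate: for any fixed $\varepsilon>0$, Theorem~\ref{T:ublarge} gives $I(n,q_\varepsilon^-(n))<1/2$ for large $n$, and monotonicity then forces $I(n,p)<1/2$ for all $p\le q_\varepsilon^-(n)$, so $p_c(n,\mathcal{R})\ge q_\varepsilon^-(n)$. Your diagonalisation is a valid workaround, but it is not needed here.
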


The remainder of the paper is organized as follows.  In Sections \ref{ss:notation} and \ref{ss:prob-tools}, some notation is given that is used throughout the paper and some probabilistic tools are stated that are used repeatedly.  In Section \ref{S:2tiles}, the model with infection of $2$-tiles is defined precisely and results on the growth-rate of infection by $2$-tiles are given.  These results imply Theorem \ref{T:crit-p-tiles}.  

In Section \ref{S:lb}, one model of the original process by $2$-tiles is used to give sharp lower bounds on the probability of percolation.  These lead to an upper bound on the critical probability $p_c(n, \mathcal{R})$.  In Section \ref{S:ub}, a different model of the process on sites by a process on $2$-tiles is used to obtain upper bounds on the process of infection spreading within a rectangle and a notion of `hierarchies' for percolation suited to the $2$-tile model is defined and used to account for the different ways percolate in the $\mathcal{R}$-process.  These lead to an upper bound on the probability of percolation and hence a lower bound on the critical probability.  Together, these results prove Theorem \ref{T:mainthm} and Corollary \ref{cor:crit-prob}.

\subsection{Notation}\label{ss:notation}

Because of the different types of pairs of sites, it is often useful to consider both pairs of sites sharing an edge and pairs of sites sharing a corner as neighbours of different types.  For $r\geq0$ and a site $\mathbf{x} \in [n]^2$, define two different balls of radius $r$ in the grid, centered at $\mathbf{x}$, 
\begin{align}
	B_r(\mathbf{x})	&=\{\mathbf{y} \in [n]^2: \|\mathbf{x}-\mathbf{y}\|_1 \leq r\}, \label{D:ell1ball}\\
	B_r^*(\mathbf{x})	&=\{\mathbf{y} \in [n]^2: \|\mathbf{x}-\mathbf{y}\|_{\infty} \leq r\} \label{D:ellmaxball}.
\end{align}
For any $\mathbf{x} \in [n]^2$, the set $B_1(\mathbf{x})$ is precisely the set $\{\mathbf{x}\}\cup N(\mathbf{x})$ while the set $B_1^*(\mathbf{x})$ is the set containing $\mathbf{x}$ together with the sites either sharing an edge or corner with $\mathbf{x}$.  Unless otherwise stated, all distances between sites or sets of sites are given by the $\ell_1$ norm.

Often, it is not just square grids that are of interest, but also `rectangles' contained in the grid.  A set $R \subseteq [1,n]^2$ is called a \emph{rectangle} if there are $a_1< a_2 \in [1,n]$ and $b_1<b_2 \in [1,n]$ with $R = [a_1, a_2]\times [b_1, b_2]$.  A rectangle $R$ is said to be \emph{internally spanned} by the initially infected sites $X_0$ if there is a $t_{R}$ so that $\mathcal{R}^{(t_R)}(X_0 \cap R) = R$.   In other words, $R$ is internally spanned if based only on the initially infected sites inside the rectangle $R$, every site in $R$ eventually becomes infected.  For $p \in (0,1)$, and $X_0 \sim \operatorname{Bin}(R,p)$, let $I(R,p)$ denote the probability that the rectangle $R$ is internally spanned.  To simplify notation, let $I(n,p)$ denote the probability that $[n]^2$ is internally spanned.

A rectangle $R = [a_1, a_2]\times[b_1, b_2]$ is said to be \emph{horizontally traversable from left to right} by $X_0$ if $R \setminus (\{a_2\} \times [b_1, b_2])\cup (\{a_1-1\}\times[b_1, b_2])$ is internally spanned by $X_0 \cup \{a_1-1\}\times[b_1, b_2]$.  That is, assuming all sites in the column $\{a_1-1\} \times [b_1, b_2]$ are infected then the sites in $X_0$ will cause the infection to spread to all of $R$ except possibly the final column, depending only on the sites that are infected inside the rectangle $R$.  The events that the rectangle $R$ is horizontally traversable from right to left,  vertically traversable from bottom to top, or vertically traversable from top to bottom are defined similarly.

The following notation for rectangles is used throughout. For a rectangle $R = [a_1, a_2]\times [b_1, b_2]$, the \emph{dimensions of $R$}, denoted by $\dim(R)$ is the pair of side-lengths of $R$: $\dim(R) = (a_2 - a_1+1, b_2-b_1+1) $.  The length of the shorter side of $R$ is denoted $\text{short}(R) = \min\{a_2 - a_1+1, b_2-b_1+1\}$, the length of the longer side of $R$ is denoted $\text{long}(R) = \max\{a_2 - a_1+1, b_2-b_1+1\}$ and the semi-perimeter of $R$ is $\phi(R) = (a_2-a_1 +1)+(b_2-b_1+1) = \text{short}(R) + \text{long}(R)$. 

\subsection{Probabilistic tools}\label{ss:prob-tools}

A few probabilistic results are used repeatedly throughout and those are stated here for reference.  The following Chernoff-type bound gives estimates for the likelihood of a binomial random variable being much larger than its mean.  In the following form it can be found, for example, in \cite[pp 11--12]{bB01}.  If $n\in \mathbb{N}^+$ and $p \in (0,1)$, then for $pn < m <n$,
\begin{equation}\label{E:binomtail}
\prob(\operatorname{Bin}(n,p) \geq m) 	\leq e^{m-np} \left(\frac{np}{m}\right)^m.
\end{equation}

Two results on increasing and decreasing events in the cube $Q^n_p$ can be found, for example, in Bollob\'{a}s and Riordan \cite[pp. 39--44]{BR06}.  Harris's Lemma \cite{tH60} states that if $A, B \subseteq Q_p^n$ are both increasing events or if both $A$ and $B$ are decreasing events, then
\begin{equation}\label{E:FKG_Harris}
\prob(A \cap B) \geq \prob(A)\ \prob(B).
\end{equation}
For sets $A,B \subseteq Q^n$, let $A \Box B \subseteq A \cap B$ be the event that $A$ and $B$ occur disjointly. The van den Berg-Kesten inequality \cite{vdBK85} states that if $A, B \subseteq Q_n^p$ are either both increasing events or both decreasing events, then
\begin{equation}\label{E:vdBK-ineq}
\prob(A \Box B) \leq \prob(A) \prob(B).
\end{equation}
Reimer \cite{dR00} showed that, in fact, inequality \eqref{E:vdBK-ineq} holds for any $A, B \subseteq Q_n^p$.  However, in this paper, we shall only require the inequality \eqref{E:vdBK-ineq} for increasing or decreasing events.

The following inequality, due to McDiarmid and Reed \cite{McR06}, is a variation on a concentration result by Talagrand \cite{mT95}.  Further details on the results are given, for example, by Talagrand in \cite{mT96}.  Fix $c >0$, $r\geq 0$, $d\geq 0$ and let $X_1, \ldots, X_n$ be independent Bernoulli random variables.  Let $g=g(X_1, \ldots, X_n)$ be a function with mean $\mu$ such that
\begin{enumerate}[(a)]
	\item if $\mathbf{x}, \mathbf{x}' \in \{0,1\}^n$ differ in exactly one coordinate, then $|g(\mathbf{x})-g(\mathbf{x}')| \leq c$ and
	\item for any $s \geq 0$, if $g(\mathbf{y}) \geq s$, there is a set $I \subseteq [1,n]$ with $|I| \leq rs +d$ such that if $\mathbf{y}' \in \{0,1\}^n$ agrees with $\mathbf{y}$ on the coordinates in $I$, then $g(\mathbf{y}') \geq s$.
\end{enumerate}
Then, for any $t \geq 0$
\begin{equation}\label{T:tal}
\prob(g - \mu \geq t) 	 \leq \exp\left({-\frac{t^2}{2c^2(r\mu + d + rt)}}\right).
%\prob(g - \mu \leq -t) 	& \leq e^{-\frac{t^2}{2c^2(r\mu + d + t/3c)}}.
\end{equation}

The following inequality due to Janson \cite{sJ90}, found also, for example, in \cite[p.33]{JLR00} can be used to estimate the probability of combinations of certain events with a limited amount of dependency.  Fix $n \in \mathbb{Z}^+$, let $\mathbf{p} \in (0,1)^n$, and let $\{A_i:\ i \in I\}$ be a finite collection of subsets of $\{1, 2, \ldots, n\}$.  Choose $\mathbf{x} \in Q_{\mathbf{p}}^n$ at random, according to the product measure given by $\mathbf{p}$ and for each $i \in I$, let $B_i \subseteq Q_{\mathbf{p}}^n$ be the event that for every $j \in A_i$, $x_j=1$.  Set
\[
\Delta = \sum_{\underset{A_i \cap A_j \neq \emptyset}{B_i, B_j}} \prob_{\mathbf{p}}(B_i \cap B_j)
\]
and let $\mu = \sum_{i \in I}\prob_{\mathbf{p}}(B_i)$.  Then
\begin{equation}\label{E:Jansonineq}
\prob_{\mathbf{p}}(\cap_{i \in I} \overline{B}_i) \leq e^{-\mu + \Delta}.
\end{equation}

\section{Infection with pairs of sites}\label{S:2tiles}

As described in Section \ref{S:intro}, the effect of the process $\mathcal{R}$ on infected sites with an infected neighbour can be more easily understood than the effect of $\mathcal{R}$ on sites with no infected neighbours.  There is still considerable difficulty in dealing with sites with infected neighbours since the events that two particular sites both have infected neighbours are not, in general, independent events.  With this in mind, a new infection scheme is defined where pairs of neighbouring sites are infected simultaneously.

For each $\mathbf{x} \in [n]^2$, consider the four pairs of sites
\begin{align}
&T_{(1,1)}(\mathbf{x}) = \{\mathbf{x}, \mathbf{x} + (1,1)\},			&T_{(1,-1)}(\mathbf{x}) = \{\mathbf{x}, \mathbf{x} + (1,-1)\}, \notag\\
&T_{(1,0)}(\mathbf{x}) = \{\mathbf{x}, \mathbf{x} + (1,0)\}, \text{ and }	&T_{(0,1)}(\mathbf{x}) =\{\mathbf{x}, \mathbf{x} + (0,1)\}. \label{E:2tiles}
\end{align}
Call each of these pairs of sites a \emph{$2$-tile}.  In order to be precise about the position of such pairs, for each $2$-tile in \eqref{E:2tiles}, call $\mathbf{x}$ the \emph{anchor} of the $2$-tile.  The anchor of a $2$-tile is the left-most, bottom-most site.  In Figure \ref{F:2tiles}, these are the black squares while the non-anchor sites are grey squares. 

The $2$-tiles of the first three types, $T_{(1,1)}(\mathbf{x})$, $T_{(1,0)}(\mathbf{x})$, and $T_{(1,-1)}(\mathbf{x})$ are said to be of \emph{width 2} while the $2$-tiles of the last type, $T_{(0,1)}(\mathbf{x})$ are said to be of \emph{width 1}.

\begin{figure}[h]
	\begin{center}
\begin{tikzpicture}[line width=1.5pt, scale = 0.8]
	\draw[step=0.8cm] (-1.2,-1.2) grid (1.2,1.2);
	\filldraw[fill=black] (-0.8,-0.8) rectangle (0,0);
	\filldraw[fill=gray, draw=black] (0,0) rectangle (0.8,0.8);
	
	\draw[step=0.8cm] (2,-1.2) grid (4.4,1.2);
	\filldraw[fill=black] (2.4,0.8) rectangle (3.2,0);
	\filldraw[fill=gray, draw=black] (3.2,0) rectangle (4,-0.8);
	
	\draw[step=0.8cm] (5.2,-1.2) grid (7.6,0.4);
	\filldraw[fill=black] (5.6,-0.8) rectangle (6.4,0);
	\filldraw[fill=gray, draw=black] (6.4,0) rectangle (7.2,-0.8);
	
	\draw[step=0.8cm] (8.4,-1.2) grid (10,1.2);
	\filldraw[fill=black] (8.8,-0.8) rectangle (9.6,0);
	\filldraw[fill=gray, draw=black] (8.8,0) rectangle (9.6,0.8);
	
\end{tikzpicture}

	\end{center}
		\caption{Pairs of sites forming $2$-tiles}
		\label{F:2tiles}
\end{figure}
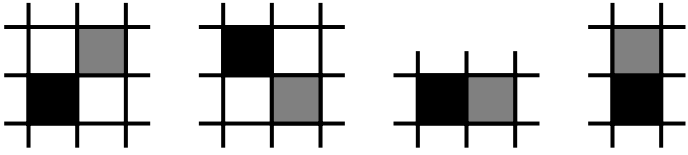

Given $p>0$, let $X_{\text{tiles}}$ be a random configuration of $2$-tiles on the grid $[n]^2$ with each of the $2$-tiles with anchor in $[n]^2$:
\[
\bigcup_{\mathbf{x} \in [n]^2}\{T_{(1,1)}(\mathbf{x}),T_{(1,-1)}(\mathbf{x}), T_{(1,0)}(\mathbf{x}), T_{(0,1)}(\mathbf{x}) \}
\]
included independently with probability $p^2$.  Note that, in general, $X_{\text{tiles}}$ might contain many overlapping $2$-tiles.  To avoid confusion, the measure on configurations of $2$-tiles on the grid is denoted $\mathbb{P}_2$.  

Any configuration of $2$-tiles is naturally associated with the set of sites in the grid that are contained in some $2$-tile.  A configuration of $2$-tiles, $X_{\text{tiles}}$ is said to \emph{percolate} if the set of sites in some $2$-tile of $X_{\text{tiles}}$ percolates.  Similarly a rectangle $R$ will be said to be traversable in some particular direction with respect to $X_{\text{tiles}}$ if $R$ is traversable in that direction by the set of sites in some $2$-tile.  

It is shown, in Sections \ref{S:lb} and \ref{S:ub}, that the probability that a random configuration of infected sites percolates (in $\mathcal{R}$) can be approximated by the probability that a random configuration of $2$-tiles percolates.

One advantage to working with $2$-tiles is that, since every infected site has a neighbour, either along an edge or at a corner, a rectangle $R$ is traversable by $X_{\text{tiles}}$ under $\mathcal{R}$ exactly when $R$ is traversable by $X_{\text{tiles}}$ with respect to the $2$-neighbour bootstrap process.  Thus, as in usual $2$-neighbour bootstrap percolation, the only obstacle to crossing $R$ is a pair of adjacent columns containing no infected sites.

Given a configuration of $2$-tiles, a column is called \emph{$2$-occupied} if it contains the anchor of a $2$-tile of width 2, a column is \emph{$1$-occupied} if it contains the anchor of a $2$-tile of width 1, and a column is \emph{unoccupied} if it does not contain the anchor of any $2$-tile.  A column is said to be \emph{occupied} if it is either $1$-occupied or $2$-occupied.  Note that a column might be unoccupied and yet contain the non-anchor of some $2$-tile.  Call a column \emph{empty} if it does not contain any sites from any $2$-tiles. A pair of empty adjacent columns is called a \emph{double gap}.

As in the study of usual bootstrap percolation (see for example, Holroyd \cite[Lemma 7]{aH03}), the probability that a rectangle $R$ contains no double gaps is defined recursively in terms of the number of columns in $R$.  The following function appears as the characteristic function of the recurrence relation that arises in the analysis of infection by $2$-tiles and a few helpful facts are first proved.

\begin{definition}
For each $u \in (0,1)$, set
\begin{align*}
F(u, x) = F_{u}(x) 	&=  x^3-(1-u^4)x^2-u^4(1-u^4)x - u^8(1-u^3)\\
					&= (x-1)(x^2+u^4x + u^8) + u^{11}
\end{align*}
and let $\beta(u)$ be the largest real root of $F_u(x)$.
\end{definition}

Since $F_u(0) = -u^8(1-u^3) < 0$ and $F_u(1) = u^{11} > 0$, there is at least one root in $(0,1)$. Consider the derivative 
\[
\frac{\partial}{\partial x}F(u, x) = F'_u(x)=3x^2-2(1-u^4)x-u^4(1-u^4).
\]
 As $F'_u(0)=-u^4(1-u^4)<0$ and $F'_u(1)=1+u^4+u^8>0$, the function $F_u$ has a relative maximum less than zero at a value less than zero and a relative minimum between 0 and 1. Since $F_u(x)$ is a polynomial  of degree $3$ in $x$, $F_u(x)$ has exactly one root in $(0,1)$. In order to obtain bounds on the value of $\beta(u)$ in terms of $u$, note that  
\[
F_u(1-u^{11}) 	= -u^{15}(1+u^4-2u^7-u^{11} + u^{18})<0,
%				&=-u^{15}(1-u)(1+u+u^2+u^3+2u^4+2u^5 +2u^6 - u^{11} - u^{12} - u^{13} - u^{14} - u^{15} - u^{16} - u^{17})\\
%				&=-u^{15}(1-u)((1-u^{11})(1+u^4+u^5 + u^6) + (1-u^8)(u^4 + u^5 + u^6) + u + u^2 +u^3)
\]
and hence $1-u^{11} \leq \beta(u) \leq  1$.  With a little more work, it can be shown that for any $u \in (0,1)$, $\beta(u) \in (1-u^{11}, (6(1-u))^{1/3})$. 

%and that if $0<u \leq 1/2$, then $\beta(u) \in (1-u^{11}, 1-u^{12})$.

\begin{lemma}\label{L:crossing}
Fix $p \in (0,1)$ and let $R$ be a rectangle of dimension $(m,h)$.  Set $u = (1-p^2)^h$ and let $X_{\text{tiles}}$ be a random configuration of $2$-tiles, each included independently with probability $p^2$.  Then, 
\[
(1-u^8) \beta(u)^{m} \leq \mathbb{P}_2(R \text{ is horizontally traversable by } X_{\text{tiles}}) \leq \beta(u)^{m-1}.
\]
\end{lemma}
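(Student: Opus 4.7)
The proof strategy relies on the equivalence, established in the paragraph preceding the lemma, between horizontal traversability of $R$ by $X_{\text{tiles}}$ and the absence of a \emph{double gap} in $R$---a pair of adjacent columns neither of which contains a site of any $2$-tile. I therefore estimate the probability of no double gap by a column-by-column transfer-matrix argument whose characteristic polynomial is $F_u$, and read off the bounds from the dominant eigenvalue $\beta(u)$.

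First I would compute per-column statistics. Independence of the four tile indicators at each site shows that a column of height $h$ is $2$-occupied with probability $1 - u^3$, is $1$-occupied only with probability $u^3 - u^4$, and is unoccupied with probability $u^4$, where $u = (1-p^2)^h$. The key structural fact is that a column is \emph{empty} exactly when it is unoccupied \emph{and} the preceding column carries no width-$2$ anchor---a width-$2$ anchor in the column to the left deposits a non-anchor site into the current column. Thus a double gap at an interior pair $(j, j+1)$ is the independent event ``column $j-1$ has no width-$2$ anchor, and columns $j, j+1$ are both unoccupied'', with probability $u^3 \cdot u^4 \cdot u^4 = u^{11}$---the constant term of $F_u$.

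Next I would introduce a Markov chain whose state at column $m$ retains just enough information to decide, upon revealing column $m+1$, whether a new double gap forms. Three states suffice after lumping two states with identical outgoing transitions: (a) column $m$ is $2$-occupied, (b) column $m$ is $1$-occupied only or is unoccupied with a $2$-occupied predecessor, and (c) column $m$ is unoccupied with a non-$2$-occupied predecessor. Letting $\mathbf{v}_m$ be the vector of probabilities of no double gap up to column $m$ with column $m$ in each of these three states, we have $\mathbf{v}_{m+1} = M \mathbf{v}_m$ for an explicit $3 \times 3$ sub-stochastic matrix $M$. A short determinant calculation yields $\det(\lambda I - M) = F(u, \lambda)$, so by Cayley--Hamilton the scalar $T_m := \mathbf{1}^T \mathbf{v}_m$ satisfies the three-term recurrence
\[
T_{m+3} = (1-u^4)\, T_{m+2} + u^4(1-u^4)\, T_{m+1} + u^8(1-u^3)\, T_m.
\]

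Both bounds of the lemma then follow by parallel inductions on this recurrence. For the upper bound, the base cases $T_m \leq \beta(u)^{m-1}$ for $m = 1, 2, 3$ are verified by direct computation using the estimate $\beta(u) \geq 1 - u^{11}$, and the inductive step is immediate: substituting $T_j \leq \beta(u)^{j-1}$ into the recurrence and using $F_u(\beta(u)) = 0$ to collapse the right-hand side gives $T_{m+3} \leq \beta(u)^{m+2}$. The lower bound $T_m \geq (1-u^8)\,\beta(u)^m$ runs through the same inductive step, with the constant $1 - u^8$ chosen precisely to accommodate the worst-case left-boundary situation (where the contribution of column $0$ is absent, forcing $T_2 = 1 - u^8$); the remaining base cases follow from $\beta(u) < 1$. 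The main obstacle I anticipate is the setup of the Markov chain: one must recognize that the state has to record whether the \emph{previous} column is $2$-occupied (rather than just the type of the current column), carry out the lumping correctly, and verify through the $3 \times 3$ determinant that the resulting characteristic polynomial matches $F_u$ exactly. The treatment of the left and right boundaries of $R$ is delicate but affects only the explicit prefactors $\beta(u)^{-1}$ and $(1 - u^8)$, not the growth rate.
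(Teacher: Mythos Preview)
Your proposal is correct and follows essentially the same approach as the paper: both derive the linear recurrence $A_{m}=(1-u^4)A_{m-1}+u^4(1-u^4)A_{m-2}+u^8(1-u^3)A_{m-3}$ with characteristic polynomial $F_u$, then prove the two bounds by induction using that $\beta(u)$ is a root. The only difference is presentational---the paper obtains the recurrence by direct case analysis on the last one, two, or three columns, whereas you package the same analysis as a three-state transfer matrix and invoke Cayley--Hamilton, which is a slightly heavier but equivalent route to the identical recurrence and induction.
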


\begin{proof}
Fix $h \geq 1$ and set $u = u(p,h)=(1-p^2)^h$.  Let $C$ be any column of sites of height $h$, a rectangle of dimension $(1,h)$, then 
\begin{align*}
\mathbb{P}_2(C \text{ is $1$-occupied}) 	&=1-(1-p^2)^h = 1-u,\\
\mathbb{P}_2(C \text{ is $2$-occupied})	&=1-(1-p^2)^{3h} = 1-u^3,\\
\mathbb{P}_2(C \text{ is either $1$ or $2$-occupied})	&= 1-(1-p^2)^{4h} = 1-u^4, \text{ and}\\
\mathbb{P}_2(C \text{ is unoccupied})		&=(1-p^2)^{4h} = u^4.
\end{align*}

Considering only the squares inside the relevant rectangle, for each $m \geq 0$, let $R_m=[m]\times [h]$ and set 
\[
A_m = \mathbb{P}_2(R_m \text{ horiz. trav.}) = \mathbb{P}_2(X_{\text{tiles}} \text{ has no double gaps in } R_m).
\]
  In order to obtain bounds on the value of $A_m$, a recursion for the sequence $\{A_m\}_{m \geq0}$ is defined.  Let the columns of $R$ be denoted $C_1, C_2, \ldots, C_m$.  When $m \geq 3$, there are three distinct ways to traverse a rectangle of width $m$:
\begin{enumerate}[(a)]
   \item either $C_m$ is occupied and $R\setminus C_m$ is traversable, 
   \item $C_m$ is unoccupied, $C_{m-1}$ is occupied and $R \setminus (C_{m-1} \cup C_m)$ is traversable, or finally
   \item $C_{m-1}$ and $C_m$  are both unoccupied, the column $C_{m-2}$ is $2$-occupied and the first $m-3$ columns of $R$ are traversable.
\end{enumerate}
     The first few values of $A_m$ can be calculated exactly:
$A_0	=1$,
$A_1	=1$, and 
$A_2	 =1- \mathbb{P}_2(C_1\text{ is unoccupied})^2 = 1-u^8$.

Considering the three cases above, for each $m \geq 3$, a recurrence relation for the sequence $\{A_m\}_{m \geq 0}$ is given by
\[
A_m = (1-u^4)A_{m-1} + u^4(1-u^4)A_{m-2} + u^8(1-u^3)A_{m-3}.
\]

Recall that $\beta(u)$ is a real root in $(0,1)$ of the polynomial $F_u(x) = x^3-(1-u^4)x^2 - u^4(1-u^4)x - u^8(1-u^3)$.  Instead of solving the recursion exactly, 
the goal is to show, by induction, that for all $m$, the value of $A_m$ is close to $\beta(u)^m$.

The base cases can be checked directly,
\begin{align*}
(1-u^8)\beta(u)^0 = (1-u^8) &< 1 = A_0 	< \beta(u)^{-1} \text{ and}		&&\text{(since  $\beta(u)^{-1}>1$)}\\
(1-u^8) \beta(u) &< 1 = A_1		= \beta(u)^0.
\end{align*}
Since $\beta(u) < 1$, then $(1-u^8)\beta(u)^2< (1-u^8) = A_2$ and since $u \in (0,1)$, $A_2 = 1-u^8 < 1-u^{11} < \beta(u)$. The rest follows by induction, using the fact that $\beta(u)$ is a zero of the characteristic equation for the recurrence for the sequence $\{A_m\}$.
\end{proof}

In what follows, a few basic properties of the function $\beta(u)$ are used: the rough bounds already given and the properties stated in the following lemma.

\begin{lemma}
In the interval $(0,1)$, the function $\beta(u)$ is continuous, decreasing, and concave.
\end{lemma}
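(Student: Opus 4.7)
My approach treats $\beta(u)$ as an implicit function defined by $F(u, \beta(u)) = 0$ and exploits the factorisation $F(u,x) = (x-1)(x^2 + u^4 x + u^8) + u^{11}$ already noted in the excerpt. Writing $Q(x, u) := x^2 + u^4 x + u^8$, the identity used repeatedly is
\[
(1 - \beta(u))\, Q(\beta(u), u) = u^{11}, \qquad u \in (0,1).
\]

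For continuity, the excerpt already showed that $F_u$ has exactly one real root in $(0,1)$, lying strictly to the right of the unique positive critical point of $F_u$; in particular $F_x(u, \beta(u)) > 0$. Since $F$ is jointly polynomial in $(u,x)$, the implicit function theorem yields that $\beta$ is real-analytic, and hence continuous, on $(0,1)$.

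For strict monotonicity, implicit differentiation gives $\beta'(u) = -F_u(u, \beta)/F_x(u, \beta)$. From the factorised form one computes $F_u(u, x) = 4u^3(x-1)(x + 2u^4) + 11 u^{10}$; substituting $x = \beta$ and eliminating the factor $(1-\beta) = u^{11}/Q$ simplifies this to
\[
F_u(u, \beta(u)) = \frac{u^{10}}{Q(\beta,u)} \bigl(11 \beta^2 + 7 u^4 \beta + 3 u^8\bigr) > 0.
\]
Combined with $F_x(u, \beta) > 0$, this gives $\beta'(u) < 0$.

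Concavity is the main obstacle. Differentiating the implicit relation a second time and substituting $\beta' = -F_u/F_x$, the claim $\beta''(u) < 0$ reduces, after multiplication by $F_x^2 > 0$, to the polynomial inequality
\[
F_x^2\, F_{uu} - 2 F_u F_x\, F_{ux} + F_u^2\, F_{xx} > 0
\]
at $(u, \beta(u))$. I plan to simplify each second-order partial at $x = \beta$ by repeatedly using $(1-\beta)Q = u^{11}$; for instance this yields $F_{uu}(u, \beta) = (u^9/Q)(110\beta^2 + 98 u^4\beta + 54 u^8) > 0$, while $F_{xx}(u, \beta) = 6\beta + 2u^4 - 2$ is positive on the relevant range (using the bound $\beta \geq 1 - u^{11}$ from the excerpt). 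After these substitutions the LHS becomes a rational function in $\beta$ and $u$ whose numerator, a polynomial, must be verified to be strictly positive on the region of interest. The expected dominant positive contribution is $F_{uu}$ for small $u$ (where $\beta \approx 1$ and $\beta'$ is of order $u^{10}$) and $F_{xx}(\beta')^2$ once $u$ is bounded away from $0$ (where $\beta$ is bounded away from $1$); in each regime the mixed term $2F_{ux}\beta'$ is negative but strictly smaller in magnitude than the two positive terms combined, so the inequality reduces to a direct if tedious algebraic verification.
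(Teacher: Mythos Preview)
Your approach is essentially the same as the paper's: both use the implicit function theorem for continuity, implicit differentiation to obtain $\beta' = -F_u/F_x < 0$ for monotonicity, and reduce concavity to checking that the numerator $F_x^2 F_{uu} - 2F_uF_xF_{ux} + F_u^2 F_{xx}$ in the second-derivative formula is positive, leaving that final polynomial verification as a tedious but routine computation. Your systematic use of the identity $(1-\beta)Q(\beta,u) = u^{11}$ to rewrite $F_u$ and $F_{uu}$ at $x=\beta$ as manifestly positive expressions is a cleaner bookkeeping device than the paper's reliance on the bound $\beta > 1 - u^{11} > 1 - 2u^4$, and should streamline the concavity check, but the overall strategy is identical.
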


\begin{proof}
By the implicit function theorem, since $F(u,x)$ is a continuously differentiable function, so is $\beta(u)$ on any open interval for which $\frac{\partial F}{\partial x} (u, \beta(u)) \neq 0$.  Now
\begin{align*}
\frac{\partial F}{\partial x} (u,x)	
	&=3x^2 - 2(1-u^4)x - u^4(1-u^4)\\
	%&=\frac{3}{x} \left(x^3 - \frac{2}{3}(1-u^4)x^2 - \frac{1}{3}u^4(1-u^4)x \right)\\
	&=\frac{3}{x} \left(F(u,x) + \frac{1}{3}(1-u^4)x^2 + \frac{2}{3}u^4(1-u^4)x + u^8(1-u^3)\right)\\
	&> \frac{3}{x}F(u,x).		&& \hspace{-70pt}\text{(for $u, x \in (0,1)$)}
\end{align*}
Thus, for any $u \in (0,1)$, $\frac{\partial F}{\partial x}(u, \beta(u)) > \frac{3}{\beta(u)} F(u, \beta(u)) = 0$.  Further,
\begin{align*}
\frac{\partial F}{\partial u}(u,x)
	&=4u^3x^2 - 4u^3 x + 8u^7x - 8u^7 + 11u^{10}\\
	&=4u^3(x^2 - (1-2u^4)x - 2u^4(1-11/8 u^3)).
\end{align*}
Since for all $u \in (0,1)$, $\beta(u) > 1-u^{11} > 1-2u^4$,
\begin{align*}
\frac{\partial F}{\partial u}(u, \beta(u))
	&=4u^3 \big( \beta(u)(\beta(u) - (1-2u^4)) - 2u^4(1-11/8 u^3) \big)\\
	&> 4u^3 \big( (1-u^{11})(1-u^{11} - 1 + 2u^4) - 2u^4 + 11/4u^7 \big)\\
	%&=4 u^3(11/4 u^7 - u^{11} - 2u^{15} + u^{22})\\
	%&=4u^{10}(11/4 - u^4 - 2u^8 + u^{15})\\
	%&=4u^{10}(3/4 + (1-u^7) + (1-u^4) + u^7-u^8 - (u^8 - u^{15}))\\
	&=4u^{10}(3/4 + (1-u^7)(1-u^8) + (1-u^4) + u^7(1-u))\\
	&>0.
\end{align*}
Thus, for all $u \in (0,1)$, the function $\beta(u)$ is differentiable and hence continuous with 
\[
\beta'(u) = -\frac{\frac{\partial F}{\partial u}(u, \beta(u))}{\frac{\partial F}{\partial x}(u, \beta(u))} <0
\]
and hence $\beta(u)$ is decreasing.

To see that $\beta$ is concave note that by differentiating implicitly,
\[
\beta''(u) = -\left(\frac{\frac{\partial^2 F}{\partial u^2}\left(\frac{\partial F}{\partial x}\right)^2 + \frac{\partial^2 F}{\partial x^2}\left(\frac{\partial F}{\partial u}\right)^2 - 2\frac{\partial^2 F}{\partial u \partial x}\frac{\partial F}{\partial x}\frac{\partial F}{\partial u}}{\left(\frac{\partial F}{\partial x}\right)^3}\right)(u, \beta(u)).
\]
As above, $\left(\frac{\partial F}{\partial x}\right)^3(u, \beta(u)) > 0$ and  a straight-forward, but tedious calculation shows that the expression in the numerator is also positive.

\end{proof}

Following notation similar to that used in the study of usual bootstrap percolation, it is often convenient to use the following functions related to $\beta(u)$.  Set 
\[
g(x) = -\log{(\beta(e^{-x}))}.
\]
  For $p \in (0,1)$, define 
\[ 
  q = q(p) = (-\log{(1-p^2)})^{1/2}.
\]
    When $p$ is small, $q^2 \sim p^2$, with the advantage that for any $p>0$ and  $h \in \mathbb{Z}^+$,
\begin{equation}\label{E:betag}
\beta((1-p^2)^h) = e^{-g(hq^2)}.
\end{equation}

Since $\beta(u)$ is defined for $u \in [0,1]$, $g(x)$ has domain $x \in (0, \infty)$ and has the following useful properties.

\begin{fact}\label{Fact:gconvex}
The function $g(x) = -\log{(\beta(e^{-x}))}$ is decreasing, convex and integrable on $(0,\infty)$.
\end{fact}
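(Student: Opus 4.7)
The strategy is to exploit the compositional structure $g = \varphi \circ h$, where $\varphi(u) := -\log \beta(u)$ on $(0,1)$ and $h(x) := e^{-x}$, deducing the properties of $g$ from the properties of $\beta$ (positive, decreasing, concave on $(0,1)$) that were established in the previous lemma. The first two properties come from standard composition rules, and integrability from the explicit bound $\beta(u) \geq 1-u^{11}$.

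The plan is to proceed in three short steps. First, establish that $\varphi(u) = -\log \beta(u)$ is increasing and convex on $(0,1)$. Increasingness is immediate: $\varphi'(u) = -\beta'(u)/\beta(u) > 0$ since $\beta > 0$ and $\beta' < 0$. For convexity, observe that $-\log$ is convex and decreasing on $(0,\infty)$, and $\beta$ is concave on $(0,1)$; the composition of a convex decreasing function with a concave function is convex, so $\varphi = (-\log) \circ \beta$ is convex.

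Second, transfer these properties to $g(x) = \varphi(h(x))$ where $h(x) = e^{-x}$ is convex (and strictly decreasing) on $(0,\infty)$. Then $g'(x) = \varphi'(h(x)) \, h'(x) < 0$ since $\varphi' > 0$ and $h' < 0$, giving monotonicity. For convexity, since $\varphi$ is both convex and increasing and $h$ is convex, the standard composition rule (namely $g'' = \varphi''(h)(h')^2 + \varphi'(h)\, h'' \geq 0$) yields convexity of $g$.

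Third, for integrability on $(0,\infty)$, use the bound $\beta(u) \geq 1-u^{11}$ stated earlier to get $g(x) \leq -\log(1 - e^{-11x})$. The right-hand side is integrable on $(0,\infty)$: near $\infty$ it behaves like $e^{-11x}$ (using $-\log(1-y) \sim y$ as $y \to 0^+$), which is integrable at infinity; near $0^+$, $1 - e^{-11x} \sim 11x$, so the bound behaves like $-\log(11x)$, which has an integrable logarithmic singularity at $0$. Splitting the integral at $x=1$ and bounding each piece then yields $\int_0^\infty g(x)\,dx < \infty$.

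There is no real obstacle here; the main thing to watch is the direction of signs in the composition-of-convex-functions step, since $\beta$ is concave (not convex) and $h$ is decreasing. The calculation of $g''$ handles both at once, and everything is checkable from the properties of $\beta$ proved in the preceding lemma together with the explicit bound $\beta(u) \geq 1-u^{11}$ established earlier in the paper.
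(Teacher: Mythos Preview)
Your proof is correct and follows essentially the same approach as the paper: both use that $\beta$ is decreasing and concave together with standard composition rules to obtain monotonicity and convexity, and both bound $g(x)\le -\log(1-e^{-11x})$ via $\beta(u)\ge 1-u^{11}$ for integrability. The only cosmetic difference is that for convexity the paper groups the composition as $(-\log)\circ(\beta\circ e^{-x})$ (first showing $\beta(e^{-x})$ is concave, then applying $-\log$), whereas you group it as $((-\log)\circ\beta)\circ e^{-x}$; the underlying chain-rule computation is the same.
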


\begin{proof}
 Since the function $\beta(u)$ is decreasing in $u$, then its partial derivative with respect to $x$ is $\frac{\partial}{\partial x} (\beta(e^{-x})) = -e^{-x}\beta(e^{-x}) >0$.  Thus, since $-\log{x}$ is decreasing in $x$, the function $g$ is decreasing.  

The function $\beta(e^{-x})$ is concave since
\[
\frac{\partial^2}{\partial x^2}(\beta(e^{-x})) = e^{-x}(\beta''(e^{-x})e^{-x} + \beta'(e^{-x}))<0
\]
and hence, since $-\log x$ is both decreasing and convex, $g(x) = -\log(\beta(e^{-x}))$ is convex as well.
%This is used to show that $g$ is convex as follows.  Let $a,b>0$ and fix $t\in [0,1]$.  Since $\beta(e^{-x})$ is concave,
%$\beta(e^{-(ta+(1-t)b)}) \geq t\beta(e^{-a}) + (1-t)\beta(e^{-b})$ and so since the function $-\log{x}$ is both decreasing and convex in $x$ then
%\begin{align*}
%g(ta+(1-t)b)
%	&=-\log\left(\beta(e^{-(ta+(1-t)b)})\right)\\
%	&\leq -\log\left(t\beta(e^{-a}) + (1-t)\beta(e^{-b})\right)\\
%	&\leq t(-\log\beta(e^{-a}))+(1-t)(-\log{\beta(e^{-b})})\\
%	&=tg(a) + (1-t)g(b).
%\end{align*}
To see that $g$ is integrable, note that since $\beta(e^{-x}) \geq 1-e^{-11x}$, then $g(x) \leq -\log{(1-e^{-11x})}$ and so
\[
	\int_{0}^{\infty} g(x)\ dx
		\leq \int_0^{\infty} -\log(1-e^{-11x})\ dx\\
%		& = \int_0^{\infty} \left(\sum_{k \geq 1} \frac{e^{-11k x}}{k}\right)\ dx\\
%		& = \sum_{k \geq 1} \left(\int_0^{\infty} \frac{e^{-11k x}}{k}\ dx \right)\\
		 = \sum_{k \geq 1} \frac{1}{11k^2} = \frac{\pi^2}{66} < \infty.
\]
Thus, $g$ is convex and $\int_0^{\infty} g(x)\ dx < \infty$.
 \end{proof}

A sequence of constants obtained by integrals of the function $g$ are key elements of the proofs to come and are denoted as follows.

\begin{definition}
Denote the integral of $g$ over $(0, \infty)$ by
 \[
 \lambda = \lambda_{\mathcal{R}} = \int_{0}^{\infty} g(x)\ dx
 \]
  and for each $n >0$, set 
  \[
  \lambda_n = \int_{1/n}^n g(x)\ dx.
  \] 
\end{definition}

The exact value of $\lambda$ is not used in any of the proofs that follow, but it can be shown that 
$\lambda \approx 0.0779$.

The results of Holroyd \cite{aH03} on the critical probability for usual bootstrap percolation can be directly applied to the model of infection by $2$-tiles with the function $g$ as given in equation \eqref{E:betag} and $\lambda$ as above.  For $\{p(n)\}_{n \geq 1} \subseteq (0,1)$, let $X_{\text{tiles}}(n)$ a random configuration of $2$-tiles in $[n]^2$, with each $2$-tile included in $X_{\text{tiles}}$ independently with probability $p(n)^2$.  Then, for all $\varepsilon>0$, 
\[
\prob_2(X_{\text{tiles}}(n) \text{ percolates}) = 
	\begin{cases}
		 o(1) &\text{if } p(n)^2 < \frac{\lambda-\varepsilon}{\log n}\\
		 1-o(1)	&\text{if } p(n)^2 > \frac{\lambda+\varepsilon}{\log n}.
	\end{cases}
\]
%Similarly, for all $\varepsilon>0$, if for all $n\geq 1$, $p(n)^2 > \frac{\lambda+\varepsilon}{\log n}$, then
%\[
%\prob_2(X_{\text{tiles}}(n) \text{ percolates}) = 1- o(1).
%\]

It remains to show that, indeed, the model of infection by $2$-tiles is a good approximation for the probability of percolation in the recovery process $\mathcal{R}$ when single sites are initially infected.  In Sections \ref{S:lb} and \ref{S:ub}, two different alterations of an initially infected set of sites are given to obtain lower and upper bounds, respectively, on the probability of percolation.

\section{Lower bound for probability of percolation}\label{S:lb}

\subsection{Traversing rectangles}

In this section, it is shown that for certain values of $p$ and $n$, it is very likely that the grid, $[n]^2$, percolates in the bootstrap process with recovery when sites are initially infected independently with probability $p$.

 Given a configuration of infected sites $X$, a new configuration $X^{-}$ is defined so that $X^{-} \subseteq X$ and with the property that every site in $X^{-}$ has a neighbour in $X^{-}$ either sharing an edge or a corner.  The configuration $X^{-}$ can then be compared to configurations of $2$-tiles.  This is accomplished most simply in the cases where there is no ambiguity with regards to assigning $2$-tiles to pairs of sites in $X^{-}$.

Throughout, let $X_0$ be the set of initially infected sites; each site infected independently with probability $p$.  As before, let $X_{\text{tiles}}$ be a configuration of $2$-tiles on the sites in $R$ with each $2$-tile occurring independently with probability $p^2$.  Given a configuration of $2$-tiles, $X_{\text{tiles}}$, define $|X_{\text{tiles}}|$ to be the number of squares in the grid that are contained in at least one $2$-tile.  If there is a site is contained in more than one $2$-tile of the configuration $X_{\text{tiles}}$, the site is only counted once for $|X_{\text{tiles}}|$.  

A configuration of sites, $X_0$, where every site is contained in some $2$-tile can be most easily compared to a configuration of $2$-tiles if $X_0$ determines exactly one configuration of $2$-tiles.  With this in mind, it will be useful to keep track of pairs of $2$-tiles that could cause ambiguity.   
Recall that $B_1^*$ is used for the $\ell_\infty$ ball of radius $1$.

\begin{definition}
A pair of $2$-tiles $\{\mathbf{x}, \mathbf{x_1}\}$ and $\{\mathbf{y}, \mathbf{y_1}\}$ forms a \emph{triple} if 
\[
B_1^*(\{\mathbf{x}, \mathbf{x_1}\}) \cap \{\mathbf{y}, \mathbf{y_1}\} \neq \emptyset.
\]
\end{definition}
Thus, two tiles that overlap form a triple and also two $2$-tiles that touch, either along an edge or at a corner, form a triple.  Each triple involves at least $3$ sites and so occurs in the set $X_0$ with probability at most $p^3$.

\begin{definition}\label{D:X*}
For any $n \in \mathbb{N}$ and $X \subseteq [n]^2$, define $X^{-} \subseteq X$ as follows:
\[
X^- = \{\mathbf{x} \in X:\ B_1^*(\mathbf{x}) \cap X \neq \{\mathbf{x}\}\}.
\]
If $\mathbf{x} \in X$ and $B_1^*(\mathbf{x}) \cap X = \{\mathbf{x}\}$, call $\mathbf{x}$ an \emph{isolated site (of $X$)}.
\end{definition}

Since $X^{-} \subseteq X$, if $X^{-}$ percolates, then so does $X$.  However, since every site in $X^{-}$ has a neighbour in $X^{-}$ sharing an edge or a corner, the set $X^{-}$ can be compared to a configuration of $2$-tiles and the estimates from Lemma \ref{L:crossing} on the probability of traversing a rectangle can be used.  In the following lemma, a lower bound is given for the probability that a rectangle of a particular scale is horizontally traversable.  In further proofs, this lower bound is used for rectangles of height either slightly smaller or slightly larger than $p^{-2}$ and so rectangles are considered whose height is in the interval $[p^{-15/8}, p^{-17/8}]$.  In order to better control the approximations, only rectangles of width at most $p^{-1/4}$ are considered.

%In what follows, it will be shown that when the probability of initial infection is sufficiently small, 

\begin{lemma}\label{L:crossing_P1}
There is a $p_0>0$ so that for all $p<p_0$, $h=h(p)$ with $p^{-15/8} \leq h \leq p^{-17/8}$, $m=m(p)$ with $1 \leq m \leq p^{-1/4}$ and rectangle $R$ of dimension $(m,h)$,  if $X_0 \sim \operatorname{Bin}(R, p)$ then,
\[
\prob(R \text{ is horiz. trav. by } X_0) \geq  e^{-463hmp^{5/2}}(1-(1-p^2)^{8h})e^{-mg(q^2h)}.
\]
\end{lemma}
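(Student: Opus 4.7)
The plan is to lower bound the traversal probability of $X_0$ by the corresponding probability for an almost-independent $2$-tile configuration, to which Lemma~\ref{L:crossing} applies, and to absorb the difference between the two models into the multiplicative factor $e^{-463hmp^{5/2}}$.

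First I would replace $X_0$ by $X_0^-$ of Definition~\ref{D:X*}; since $X_0^-\subseteq X_0$, horizontal traversal by $X_0^-$ implies horizontal traversal by $X_0$. Every site of $X_0^-$ lies in some pair of $\ell_\infty$-adjacent sites of $X_0$, that is, in some $2$-tile. Define the \emph{induced} $2$-tile configuration
\[
\tilde X(X_0) = \{T : T \text{ is a $2$-tile contained in } R \text{ with } T \subseteq X_0\}.
\]
Each fixed $2$-tile $T$ appears in $\tilde X(X_0)$ with probability exactly $p^2$, matching the independent model of Lemma~\ref{L:crossing}, but tiles sharing a site are positively correlated. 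Since $\mathcal{R}$-traversability by a $2$-tile configuration coincides with $\mathcal{B}$-traversability (as noted immediately after Lemma~\ref{L:crossing}), it suffices to give the claimed lower bound on $\prob\bigl(\tilde X(X_0)\text{ has no double gap in }R\bigr)$.

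I would then mimic the three-term recurrence from the proof of Lemma~\ref{L:crossing} in the induced model. Let $\tilde A_j$ be the probability of no double gap in the leftmost $j$ columns of $R$ under $\tilde X$, and decompose according to whether the rightmost considered column is occupied, unoccupied with preceding column occupied, or part of two consecutive unoccupied columns preceded by a $2$-occupied one. This should yield a relation of the form
\[
\tilde A_j \;\approx\; (1-u^4)\tilde A_{j-1} + u^4(1-u^4)\tilde A_{j-2} + u^8(1-u^3)\tilde A_{j-3},
\]
whose characteristic polynomial is $F_u$ with largest real root $\beta(u)$. In the induced model the single-column marginals $u^4$ and $1-u^3$ are preserved up to $O(hp^3)$ corrections, and the conditional transition probabilities given $C_1,\ldots,C_{j-1}$ pick up an additional $O(hp^3)$ error from anchors that share sites across the $C_{j-1}/C_j$ boundary and from triples of three $\ell_\infty$-close infected sites straddling that boundary; the triple contribution is bounded by a first-moment estimate, each of the $O(h)$ such configurations having probability at most $p^3$.

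The principal obstacle is to quantify these corrections cleanly and compound them across the $m$ steps of the recurrence. Granted that each step loses a multiplicative factor of at most $(1-chp^{5/2})$ for a suitable $c>0$ large enough to absorb the $O(hp^3)$ terms above, compounding gives $\tilde A_m \geq (1-u^8)\beta(u)^m(1-chp^{5/2})^m$, with the base case $\tilde A_2\geq 1-u^8$ argued as in Lemma~\ref{L:crossing}. The inequality $(1-x)^m\geq e^{-2mx}$ applies since $hp^{5/2}\to 0$ throughout the allowed range $h\leq p^{-17/8}$, $m\leq p^{-1/4}$, $p\leq p_0$; choosing $p_0$ small enough that $2c\leq 463$, and rewriting $\beta((1-p^2)^h)^m = e^{-mg(q^2h)}$ via \eqref{E:betag}, yields the bound.
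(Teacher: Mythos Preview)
Your approach diverges from the paper's at the key technical step, and the divergence leaves a genuine gap. You acknowledge that ``the principal obstacle is to quantify these corrections cleanly'' and then proceed by assuming the conclusion you need (``Granted that each step loses a multiplicative factor of at most $(1-chp^{5/2})$\ldots''). But that assumption \emph{is} the lemma. The difficulty is not merely that the single-column marginals of $\tilde X(X_0)$ differ from those of $X_{\text{tiles}}$ by $O(hp^3)$; it is that the three-term recurrence in Lemma~\ref{L:crossing} rests on \emph{independence} of column-occupancy status across columns. In your induced model the occupancy of column $j$ is a function of $X_0$ restricted to columns $j$ and $j+1$, so adjacent occupancies share an entire column of sites. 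Writing $\tilde A_j$ as a linear combination of $\tilde A_{j-1},\tilde A_{j-2},\tilde A_{j-3}$ with perturbed coefficients therefore requires controlling quantities such as
\[
\prob\bigl(\text{no double gap in cols }1,\ldots,j-3 \;\big|\; \text{cols }j-1,j\text{ unocc.},\ \text{col }j-2\text{ $2$-occ.}\bigr),
\]
and there is no reason this should equal $\tilde A_{j-3}$ up to $O(hp^3)$: the conditioning involves sites in column $j-2$, which also determine tiles anchored in column $j-3$ and hence the traversal event on the left. A first-moment count of straddling triples does not address this conditional dependence.

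The paper sidesteps the problem with a change-of-measure argument that works configuration by configuration rather than column by column. It fixes a class $\mathcal{Q}$ of ``nice'' $2$-tile configurations $A$ (no triples, $|A|\le hmp^{3/2}$) and, for each $A\in\mathcal{Q}$, compares $\prob(X_0^-=A_1)$ directly to $\prob_2(X_{\text{tiles}}=A)$: Harris's inequality on the decreasing events ``no extra site near $A$'' and ``all other sites isolated'' gives $\prob(X_0^-=A_1)\ge (1-p)^{6|A|}\prob_2(X_{\text{tiles}}=A)$, and the size cap converts $(1-p)^{6|A|}$ into $e^{-7hmp^{5/2}}$. Summing over $A\in\mathcal{Q}\cap\mathcal{C}$ and invoking Lemma~\ref{L:crossing} for $\prob_2(X_{\text{tiles}}\in\mathcal{C})$ gives the main term $(1-u^8)\beta(u)^m$; the remainder $\prob_2(X_{\text{tiles}}\in\mathcal{C}\setminus\mathcal{Q})$ is bounded separately (triples via a union bound and removal of at most eight columns, large configurations via a Chernoff bound) and shown to cost only a further factor absorbed into $e^{-463hmp^{5/2}}$. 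No recurrence in the correlated model, and hence no conditional-probability analysis, is needed.
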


\begin{proof}
Fix $p$, $h$, and $m$ with $p^{-15/8} \leq h \leq p^{-17/8}$ and $1 \leq m \leq p^{-1/4}$.  Let $R$ be a rectangle of dimension $(m,h)$  and define a set of configurations of $2$-tiles 
\begin{align*}
\mathcal{Q} &= \{A\ \mid A \text{ is a configuration of $2$-tiles in $R$, containing no triples with } \\
	&\hspace*{40pt} |A| \leq hmp^{3/2}\}.
\end{align*}
The configurations of $2$-tiles in $\mathcal{Q}$ are, essentially, those that can be unambiguously compared to configurations of infected sites.  In later estimates, it is useful to assume that $|X_{\text{tiles}}|$ is not too large and so the condition $|A| \leq hmp^{3/2}$ is included also.

Given a configuration $A$ of $2$-tiles, let $A_1$ be the set of sites that are contained in some $2$-tile from $A$ and let $\mathcal{Q}_1 = \{A_1\ \mid A \in \mathcal{Q}\}$ be the configurations of infected sites corresponding to the configurations of $2$-tiles in $\mathcal{Q}$.

First, it is shown that the probability of the event $|X_{\text{tiles}}|>hmp^{3/2}$  is relatively small.  If at least $hmp^{3/2}$ sites are covered by $2$-tiles in $X_{\text{tiles}}$, then at least $\frac{1}{2} hmp^{3/2}$ different $2$-tiles were included in $X_{\text{tiles}}$.  Note that, for $p$ sufficiently small, $4hmp^2 <  \frac{hm p^{3/2}}{2}$.  Thus, by tail estimates for binomial random variables given in  inequality \eqref{E:binomtail},
\begin{align*}
\prob_2(|X_{\text{tiles}}| > hm p^{3/2})	
	& \leq \prob\left(\operatorname{Bin}(4hm, p^2) > \frac{1}{2} hmp^{3/2}\right)\\
	& \leq \exp \biggl(\frac{hmp^{3/2}}{2}\biggr)(8p^{1/2})^{hmp^{3/2}/2}\\
	& \leq \exp \biggl(\frac{hmp^{3/2}}{2}\biggr)(e^{-5})^{hmp^{3/2}/2}
		&&\text{(for $p \leq e^{-10}/64$)}\\
	&=e^{-2hmp^{3/2}}.
\end{align*}
In order to compare this term with those involving $\beta(u)$, note that since $u = (1-p^2)^h \leq e^{-11p^2h}$ and $\beta(u) \geq 1-u^{11}$,
\[
\beta(u)		\geq 1-e^{-11p^2h}
			\geq 1-e^{-11p^{1/8}}
			\geq e^{-p^{-3/8}}
			\geq e^{-h p^{3/2}}.
\]
Thus, $\prob_2(|X_{\text{tiles}}| > hm p^{3/2}) \leq e^{-hmp^{3/2}} \beta(u)^m$.

Fix $A \in \mathcal{Q}$.  Since $A$ contains no triples, the configuration $A$ consists of exactly $|A|/2$ tiles.  Thus,
\begin{align*}
\prob_2(X_{\text{tiles}} = A)	&=(p^2)^{|A|/2} (1-p^2)^{4|R| - |A|/2}\\
				&=p^{|A|} (1-p^2)^{4|R| - |A|/2}.
\end{align*}

In order to bound the probability that $X_0^- = A_1$, note that $X_0^- = A_1$ if the following three events occur:
\begin{itemize}
	\item $E_1$: the event that $A_1 \subseteq X_0$,
	\item $E_2$: the event $(B_1^*(A_1) \setminus A_1) \cap X_0 = \emptyset$, and
	\item $E_3$: the event that every site $\mathbf{x} \in X_0 \setminus A_1$ is isolated.
\end{itemize}
Since $E_1$ is independent of $E_2 \cap E_3$, 
\[
\prob(X_1^- = A_1) = \prob(E_1 \cap E_2 \cap E_3) = \prob(E_1) \prob(E_2 \cap E_3) = p^{|A|} \prob(E_2 \cap E_3).
\]
To obtain an upper bound on $|B_1^*(A_1) \setminus A_1|$, note that if $\mathbf{x}_1$ and $\mathbf{x}_2$ are two sites sharing an edge, then $|B_1^*(\{\mathbf{x}_1, \mathbf{x}_2\}) \setminus \{\mathbf{x}_1, \mathbf{x}_2\}| = 10$ whereas if $\mathbf{x}_1$ and $\mathbf{x}_2$ are two sites sharing a corner, then $|B_1^*(\{\mathbf{x}_1, \mathbf{x}_2\}) \setminus \{\mathbf{x}_1, \mathbf{x}_2\}| = 12$.  In both cases, $|B_1^*(\{\mathbf{x}_1, \mathbf{x}_2\}) \setminus \{\mathbf{x}_1, \mathbf{x}_2\}| \leq 6|\{\mathbf{x}_1, \mathbf{x}_2\}|$.  In general, there are at most $6|A|$ sites  in $B_1^*(A_1) \setminus A_1$, and so
\[
\prob(E_2) = (1-p)^{|B_1^*(A_1) \setminus A_1|} \geq (1-p)^{6|A|}.
\]
The event $E_3$ is the intersection of a collection of decreasing events: that for each site outside of $A_1$, none of the $4$ possible sets of sites forming tiles is included in $X_1$.  Thus, by Harris's Lemma (inequality \eqref{E:FKG_Harris}), $\prob(E_3) \geq (1-p^2)^{4(|R| - |A|)}$.  Since $E_2$ and $E_3$ are both decreasing events, applying inequality \eqref{E:FKG_Harris} again yields
\[
\prob(E_2 \cap E_3) \geq \prob(E_2)\prob(E_3) \geq (1-p)^{6|A|}(1-p^2)^{4(|R| - |A|)}.
\]
Thus,
\begin{align}
\prob(X_1^- = A_1)	
	&\geq p^{|A|}(1-p)^{6|A|}(1-p^2)^{4(|R|-|A|)} \notag\\
	& = p^{|A|}(1-p^2)^{4|R| - |A|/2}(1-p)^{6|A|} \notag\\
	& \geq \prob_2(X_{\text{tiles}} =A) (1-p)^{6|A|} 				\notag\\
	& \geq \prob_2(X_{\text{tiles}} =A) e^{-7p|A|} \notag\\
	& \geq \prob_2(X_{\text{tiles}} = A) e^{-7hmp^{5/2}}.
		&&\hspace{-26pt}\text{(since $|A| \leq hm p^{3/2}$)} \label{E:P1andP2}
\end{align}

Let $\mathcal{C} = \{A: A \subseteq R \text{ and $R$ is traversable by $A$}\}$.  Then,
\begin{align*}
\prob(X_1 \in \mathcal{C})
	&\geq \prob(X_1^- \in \mathcal{C}\cap \mathcal{Q}_1)\\
	&=\sum_{A_1 \in \mathcal{Q}_1 \cap \mathcal{C}} \prob(X_1^- =A_1)\\
	&\geq \sum_{A \in \mathcal{Q} \cap \mathcal{C}} \prob_2(X_{\text{tiles}} = A) e^{-7hmp^{5/2}}		\qquad \text{(by inequality \eqref{E:P1andP2})}\\
	&=e^{-7hmp^{5/2}} \prob_2(X_{\text{tiles}} \in \mathcal{C} \cap \mathcal{Q})\\
	&=e^{-7hmp^{5/2}}\left[\prob_2(X_{\text{tiles}} \in \mathcal{C}) - \prob(X_{\text{tiles}} \in \mathcal{C} \setminus \mathcal{Q})\right].
\end{align*}

By Lemma \ref{L:crossing}, with $u = (1-p^2)^h$,  the probability of traversing satisfies $\prob_2(X_{\text{tiles}} \in \mathcal{C}) \geq \beta(u)^{m}(1-u^8)$  and so it remains to find an appropriate upper bound for $\prob(X_{\text{tiles}} \in \mathcal{C} \setminus \mathcal{Q})$.  First, 
\begin{align*}
\prob_2(X_{\text{tiles}} &\in \mathcal{C} \setminus \mathcal{Q})\\
	& \leq \prob_2(X_{\text{tiles}} \in \mathcal{C} \text{ and $X_{\text{tiles}}$ contains a triple})\\
	&\qquad + \prob_2(|X_{\text{tiles}}| > hmp^{3/2})\\
	& \leq \sum_{T \text{ a triple}}\prob_2(X_{\text{tiles}} \in \mathcal{C} \text{ and } T \subseteq X_{\text{tiles}}) + e^{-hmp^{3/2}} \beta(u)^m.
\end{align*}
Fix a triple $T$ and consider $\prob_2(X_{\text{tiles}} \in \mathcal{C} \text{ and } T \subseteq X_{\text{tiles}})$.  Note that the sites in the triple $T$ are contained in at most $4$ different columns of $R$.  Removing the columns containing sites from $T$ produces two smaller rectangles $R_1$ and $R_2$ both of height $h$.  If $X_{\text{tiles}}$ crosses $R$, there are rectangles $R_1'$ and $R_2'$ obtained from $R_1$ and $R_2$ by removing at most one column from each so that $R_1'$ and $R_2'$ are of height $h$ and of width $m_1$ and $m_2$ (respectively) with $m_1 + m_2 \geq m-6$ and with the property that $R'_1$ is traversable by $X_{\text{tiles}} \cap R'_1$ and $R_2'$ is traversable by $X_{\text{tiles}} \cap R'_2$.  Thus,
\begin{align*}
\prob_2(R &\text{ is traversable by $X_{\text{tiles}}$ and } T \subseteq X_{\text{tiles}})\\
	&\leq \prob_2(R'_1 \text{ is trav. by } X_{\text{tiles}}\cap R'_1 ) \prob_2(R'_2 \text{ is trav. by } X_{\text{tiles}}\cap R'_2 )\\
	& \qquad \cdot \prob_2(T \subseteq X_{\text{tiles}})\\
	&\leq \beta(u)^{m_1+m_2 - 2} (p^2)^2\\
	&\leq \beta(u)^{m-8} p^4.
\end{align*}
For each site $\mathbf{x} \in R$, consider the possible number of triples that contain $\mathbf{x}$ as one of the anchor sites.  There are $4$ different $2$-tiles that contain $\mathbf{x}$ as the anchor site.  If $\{\mathbf{x}, \mathbf{x}_2\}$ is a $2$-tile, then $|B_1^*(\{\mathbf{x}, \mathbf{x}_2\})| \leq 14$ and for each of the sites $\mathbf{y} \in B_1^*(\{\mathbf{x}, \mathbf{x}_2\})$, there are at $8$ different $2$-tiles that contain $\mathbf{y}$.  Since $\mathbf{x}$ could be the anchor site of one of two $2$-tiles, the number of triples that contain $\mathbf{x}$ as one of the anchor sites is at most $4\cdot 14\cdot 8/2 = 224$.  Thus 
\[
\sum_{T \text{ a triple}}\prob_2(R \text{ is traversable by $X_{\text{tiles}}$ and } T \subseteq X_{\text{tiles}}) \leq 224 hmp^4 \beta(u)^{m-8}
\]
 and so 
\begin{align*}
\prob_2(X_{\text{tiles}} \in \mathcal{C} \setminus \mathcal{Q})
	&\leq e^{-hmp^{3/2}} \beta(u)^m + 224hmp^4 \beta(u)^{m-8}\\
	& = \beta(u)^m(e^{-hmp^{3/2}} + 224hmp^4 \beta(u)^{-8}). 
\end{align*}
Since
\[
\beta(u) \geq 1-e^{-11p^2h} \geq 
	\begin{cases}
		(1-e^{-11})p^2h	&p^{1/8} \leq p^2h \leq 1\\
		1-e^{-11}		&1 \leq p^2h
	\end{cases}
\]
It follows that
$\beta(u)^{-8} \leq (1-e^{-11})^{-8}p^{-1}$.

%\begin{align*}
%\beta(u)^{-8}	&\leq 
%	\begin{cases}
%		(1-e^{-11})^{-8}(p^2h)^{-8}		&p^{1/8} \leq p^2h \leq 1\\
%		(1-e^{-11})^{-8}				&1 \leq p^2h \leq p^{-1/8}
%	\end{cases}\\
%	&\leq \begin{cases}
%		(1-e^{-11})^{-8}p^{-1}		&p^{1/8} \leq p^2h \leq 1\\
%		(1-e^{-11})^{-8}				&1 \leq p^2h \leq p^{-1/8}
%	\end{cases}\\
%	&\leq (1-e^{-11})^{-8}p^{-1}.
%\end{align*}

Thus,
\begin{align*}
e^{-hmp^{3/2}} &+ 224hmp^4 \beta(u)^{-8}\\
	& \leq e^{-hmp^{3/2}} + 224hmp^4 (1-e^{-11})^{-8}p^{-1}\\
	& \leq e^{-hmp^{3/2}} + 225hmp^3\\
	& \leq e^{-p^{-3/8}} + 225 hmp^3 &&\hspace{-30pt}\text{(since $hm \geq p^{-15/8}$)}\\
	& \leq p^{9/8} + 225 hmp^3\\
	& \leq 226 hmp^3.	&&\hspace{-30pt}\text{(since $hm \geq p^{-15/8}$)}
\end{align*}

Combining these bounds yields
\begin{align*}
\prob(X_1 \in \mathcal{C})
	&\geq e^{-7hmp^{5/2}}(1-u^8 - 226hmp^3)\beta(u)^m\\
	&\geq e^{-7hmp^{5/2}}(1-u^8)\left(1-\frac{226hmp^3}{1-u^8}\right)\beta(u)^m.
\end{align*}

Now, by considering separately the two cases $p^{1/8} \leq p^2 h \leq 1$ and $1 \leq p^2 h \leq p^{-1/8}$,
\begin{align*}
\frac{226hmp^3}{1-u^8} 
	&\leq \frac{226hmp^3}{1-e^{-8p^2h}}\\
%	& \leq \begin{cases}
%				227 p^{3/4} 		&p^{1/8} \leq p^2h \leq 1\\
%				227 p^{5/8}			&1 \leq p^2 h \leq p^{-1/8}
%			\end{cases}\\
	&\leq 227 p^{5/8}.	&&\hspace{-20pt}\text{(since $m\leq p^{-1/4}$ and $p^2h \leq p^{-1/8}$)}	
\end{align*}
and so $1-226hmp^3/(1-u^8) \geq 1-227p^{5/8} \geq e^{-454p^{5/8}}$ for $p$ sufficiently small.  Thus 
\begin{align*}
\prob(X_1 \in \mathcal{C})
%	&\geq e^{-7hmp^{5/2}}(1-u^8)\left(1-\frac{226hmp^3}{1-u^8}\right)\beta(u)^m\\
%	&\geq e^{-7hmp^{5/2}}(1-u^8)(1-227p^{5/8})\beta(u)^m\\
	&\geq e^{-7hmp^{5/2}}(1-u^8)e^{-454p^{5/8}}\beta(u)^m	\\
	&\geq e^{-7hmp^{5/2} - 454hm p^{15/8}p^{5/8}} (1-u^8) \beta(u)^m\\
	&\geq e^{-463hmp^{5/2}}(1-u^8) \beta(u)^m.
\end{align*}

Therefore, for $p$ sufficiently small, 
\begin{align*}
\prob(R \text{ is horiz. trav. by } X_1) &\geq e^{-463hmp^{5/2}}(1-u^8)\beta((1-p^2)^h)^m\\
		& =  e^{-463hmp^{5/2}}(1-u^8)e^{-mg(q^2h)}
\end{align*}
yielding the desired lower bound.
\end{proof}

\subsection{Lower bound on probability of spanning}

In the previous section, a bound on the crossing probability of a rectangle is given in terms of the function $\beta$ (or equivalently $g$). This is used in Lemma \ref{L:span_small_L}, below, to establish a bound on the probability that a large, but not arbitrarily large, rectangle is internally spanned.  Recall that $I(n, p)$ denotes the probability $[n]\times[n]$ is internally spanned when vertices are initially infected with probability $p$.

%
%\begin{definition}
%For $L \in \mathbb{N}$ and $p \in (0,1)$, let $I(L,p)$ be the event that an $L \times L$ square is internally spanned when site are initially infected with probability $p$.
%\end{definition}

\begin{lemma}\label{L:span_small_L}
There exists a $p_1 >0$ such that if $p<p_1$, then 
\[
I(\lfloor p^{-17/8} \rfloor, p) \geq \exp\left(-\frac{2 \lambda + 2p^{1/9}}{p^2}\right).
\]
\end{lemma}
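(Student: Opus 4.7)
The plan is to adapt Holroyd's lower-bound construction for internal spanning, using Lemma \ref{L:crossing_P1} as the main engine, to the recovery process. We exhibit an explicit growth event that forces $[L]^2$, with $L=\lfloor p^{-17/8}\rfloor$, to be internally spanned, and lower-bound its probability by a product of traversal probabilities supplied by Lemma \ref{L:crossing_P1}.

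\textbf{Growth scheme.} Set $h_0=\lceil p^{-15/8}\rceil$ and $\delta=\lfloor p^{-1/4}\rfloor$, and define $h_{i+1}=h_i+\delta$ until the first $k$ with $h_k\geq L$ (truncating the last step so $h_k=L$); let $R_0\subset R_1\subset\cdots\subset R_k=[L]^2$ be concentric squares of side $h_i$. The event I would consider is that $R_0$ is internally spanned and, for every $i=0,\ldots,k-1$, the frame $R_{i+1}\setminus R_i$, decomposed as two left/right strips of dimensions $(\delta/2)\times h_i$ and two top/bottom strips of dimensions $h_{i+1}\times(\delta/2)$, is traversable in the direction pointing away from $R_i$ on each of its four pieces. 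These $4k$ strips are pairwise disjoint, so the traversal events are mutually independent; together with $R_0$ being internally spanned, their conjunction forces $R_k=[L]^2$ to be internally spanned.

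\textbf{Exponent and error terms.} Each horizontal strip satisfies the hypotheses of Lemma \ref{L:crossing_P1} directly ($m=\delta/2\leq p^{-1/4}$, $h=h_i\in[p^{-15/8},p^{-17/8}]$); each vertical strip satisfies them after a $90^\circ$ rotation, with $h=h_{i+1}$. Multiplying all strip bounds and using $\beta((1-p^2)^h)=e^{-g(q^2h)}$, the dominant exponent is
\[
\sum_{i=0}^{k-1}\delta\bigl[g(q^2 h_i)+g(q^2 h_{i+1})\bigr]\;\approx\;2\int_{h_0}^L g(q^2 h)\,dh\;=\;\frac{2}{q^2}\int_{q^2 h_0}^{q^2 L} g(x)\,dx.
\]
Since $q^2 h_0\sim p^{1/8}\to 0$ and $q^2 L\sim p^{-1/8}\to\infty$ and $g$ is integrable on $(0,\infty)$ with $\int_0^\infty g=\lambda$, this tends to $(2\lambda+o(1))/p^2$. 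The $e^{-463 h_i m p^{5/2}}$ factors summed over all strips contribute at most $O(L^2 p^{5/2})=O(p^{-7/4})=O(p^{1/4}/p^2)$ to the exponent; the $(1-u_i^8)$ factors contribute, via the substitution $y=8p^2 h$ and the finiteness of $\int_0^\infty -\log(1-e^{-y})\,dy=\pi^2/6$, at most $O(p^{1/4}/p^2)$. Both are $o(p^{1/9}/p^2)$, since $p^{1/4}<p^{1/9}$ for $p<1$.

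\textbf{Seed bound.} The remaining step, and the main technical hurdle, is to show $\prob(R_0\text{ is internally spanned})\geq\exp(-o(1/p^2))$. Since $h_0$ is well below the critical scale at density $p^2$, a constant lower bound is not available, but only a sub-$(1/p^2)$ exponent contribution is required. The natural strategy is a recursive application of the construction above on finer scales, starting from a tiny nucleus (for example a pair of adjacent $2$-tiles) and growing up to $R_0$ via a variant of Lemma \ref{L:crossing_P1} valid for heights below $p^{-15/8}$ (either proved separately or invoked from a preceding result in the paper). Because $\int_0^{q^2 h_0} g(x)\,dx\to 0$ as $p\to 0$ (using integrability of $g$ near $0$), the contribution of this nested construction to the exponent is $o(1/p^2)$, as required. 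Combining this seed bound with the growth exponent produces the claimed bound $I(L,p)\geq\exp(-(2\lambda+2p^{1/9})/p^2)$ for all sufficiently small $p$.
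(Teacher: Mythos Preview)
Your growth scheme and the accounting for the main exponent and error terms are essentially the paper's argument: the paper also grows by strips of width $\lfloor p^{-1/4}\rfloor$ from a square of side $h_0\approx p^{-15/8}$ up to side $\lfloor p^{-17/8}\rfloor$, applies Lemma~\ref{L:crossing_P1} to each strip, and collects the exponents exactly as you do. Two minor differences: the paper grows from the corner (adding one strip on the right and one on top at each step) rather than concentrically, and it uses Harris's lemma rather than independence, because the events overlap slightly --- in particular, traversability as defined may miss the final column, so at each step the paper additionally asks that the new boundary column contain a pair of adjacent infected sites, an event that reuses a column from the previous strip. Your concentric decomposition and independence claim would need to address this last-column issue and the corners of each frame, but these are repairable details.

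The genuine gap is your seed bound. You propose a recursive construction down to a tiny nucleus, invoking ``a variant of Lemma~\ref{L:crossing_P1} valid for heights below $p^{-15/8}$ (either proved separately or invoked from a preceding result in the paper)''. No such variant exists in the paper, and proving one is nontrivial: the comparison with $2$-tiles in Lemma~\ref{L:crossing_P1} relies on $h\geq p^{-15/8}$ to control the error factors, and the $(1-u^8)$ factors, which you estimate via $\int_0^\infty -\log(1-e^{-y})\,dy$, become problematic when $h$ is much smaller than $p^{-2}$ since $u=(1-p^2)^h$ is then close to $1$. The paper bypasses all of this with a one-line trick: it simply requires the diagonal sites $(1,1),(2,2),\ldots,(h_0,h_0)$ together with $(1,2)$ to be initially infected. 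This event has probability $p^{h_0+1}$ and immediately forces $[h_0]^2$ to be internally spanned in $\mathcal{R}$ (each diagonal site has an infected neighbour at a corner, so none recovers, and the $2$-neighbour rule then fills the square). Since
\[
p^{h_0+1}=\exp\bigl((h_0+1)\log p\bigr)\geq\exp\Bigl(-\frac{-p^{1/8}\log p\cdot(1+p^{15/8})}{p^2}\Bigr)\geq\exp\Bigl(-\frac{p^{1/9}}{p^2}\Bigr)
\]
for $p$ small, this already sits inside the claimed error term. Replacing your recursive seed argument with this explicit diagonal event closes the gap and matches the paper's proof.
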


\begin{proof}
Fix $p \in (0,1)$ and set $m= \lfloor p^{-1/4} \rfloor$ and let $h_0$ be the smallest integer in $[p^{-15/8}, 2p^{-15/8}]$ such that $\lfloor p^{-17/8} \rfloor - h_0$ is divisible by $m$.  Set 
\[
n = (\lfloor p^{-17/8} \rfloor - h_0)/m
\]
 and for $j = 1, 2, \ldots, n$, set $h_j = j\cdot m + h_0$.  In particular $h_n = \lfloor p^{-17/8} \rfloor$ and $p^{-15/8}(1-3p^{1/4}) \leq n \leq p^{-15/8}$.

Setting $N = \lfloor p^{-17/8} \rfloor$, the square $[N]^2$ is internally spanned if the following three events all occur:
\begin{itemize}
	\item The sites $(1,1), (2,2), \ldots, (h_0, h_0)$, and $(1,2)$ are initially infected,
	\item for $j = 1, 2, \ldots, n-1$, the rectangles $[h_{j}+1, h_{j+1}] \times [1, h_j]$ are horizontally traversable from left to right and the rectangles $[1, h_j] \times [h_j +1, h_{j+1}]$ are vertically traversable from bottom to top, and
	\item for each $j = 1, 2, \ldots, n$, the rectangle $\{h_j\} \times [1, h_{j-1}]$ and the rectangle $[1, h_{j-1}] \times \{h_j\}$ each contain two adjacent infected sites.
\end{itemize}

Let $S$ denote the intersection of these three events. Note that for the third event
\[
\prob(\{h_{j+1}\} \times [1, h_j] \text{ contains two adjacent infected sites}) \geq 1-(1-p^2)^{(h_j-1)/2}.
\]

Since $S$ is the intersection of increasing events, by Lemma \ref{L:crossing_P1} and Harris's Lemma (inequality \eqref{E:FKG_Harris}),
\begin{align*}
\prob(S)		&\geq p^{h_0 + 1} \bigg(\prod_{j=0}^{n-1}\prob([h_j+1, h_{j+1}] \times [1, h_j]\text{ is trav. by } X_0)\\
			&\qquad (1-(1-p^2)^{(h_j-1)/2})\bigg)^2\\
			&\geq p^{h_0 + 1}  \bigg(\prod_{j=0}^{n-1} e^{-463h_j mp^{5/2}}(1-(1-p^2)^{8h_j})e^{-mg(q^2 h_j)}\\
			&\qquad (1-(1-p^2)^{(h_j-1)/2})\bigg)^2\\
			&= p^{h_0+1}  \left(\prod_{j=0}^{n-1}e^{-463h_j mp^{5/2}}(1-e^{-8q^2 h_j})e^{-mg(q^2 h_j)}(1-e^{-q^2(h_j-1)/2})\right)^2.
			%&=\exp\left((h_0+1)\log p - 2\sum_{j=0}^{n-1}{255h_j mp^{5/2} -\log (1-e^{-8q^2 h_j}) + mg(q^2 h_j) - \log (1-e^{-q^2(h_j-1)/2})} \right).
\end{align*}

The terms occurring in the above expression are simplified separately.  First, since $m=h_j-h_{j-1}$, and $q \geq p$,
\begin{align*}
\sum_{j=0}^{n-1} m g(q^2 h_j)
	& = \frac{1}{q^2} \sum_{j=0}^{n-1}mq^2 g(q^2 h_j)\\
	&\leq \frac{1}{p^2} \int_{0}^{\infty} g(x)\ dx =\frac{\lambda}{p^2}.
\end{align*}
Similarly, using the fact that $p^2 \leq q^2 \leq 2p^2$ and $p^{-1/4}/2 \leq m \leq p^{-1/4}$,
\begin{align*}
\sum_{j=0}^{n-1}{ h_j mp^{5/2}}
	&\leq \sum_{j=0}^{n-1}{ h_j p^{9/4}}	\\
	&\leq  n h_n p^{9/4}\\
	&\leq  p^{-15/8}p^{-17/8}p^{9/4}\\
	&=\frac{p^{1/4}}{p^2},\\
\sum_{j=0}^{n-1} -\log (1-e^{-8q^2 h_j})
	&\leq \frac{1}{8mq^2} \int_{0}^{\infty}\left( -\log(1-e^{-x})\right)\ dx	\\
	&\leq \frac{p^{1/4} (\pi^2 /24)}{p^2}, \text{ and}\\
\sum_{j=0}^{n-1} -\log(1-e^{-q^2(h_j - 1)/2})
	&\leq \frac{2}{q^2m} \int_0^{\infty} \left(-\log(1-e^{-x})\right)\ dx\\
	&\leq \frac{p^{1/4} (2 \pi^2 /3)}{p^2}.
\end{align*}
Finally,
\begin{align*}
p^{h_0 + 1}	&= \exp((h_0+1)\log p)\\
			&\geq \exp((p^{-15/8} + 1)\log p)\\
			& = \exp \left(-\frac{(-p^{1/8}\log p)(1+p^{15/8})}{p^2}\right)\\
			&\geq \exp \left(-\frac{p^{1/9}}{p^2} \right).		&&\text{(for $p$ small enough)}
\end{align*}

Combining these yields
\begin{align*}
I(\lfloor p^{-17/8} \rfloor, p) &\geq \prob(S)\\
		&\geq \exp\left(-\frac{(p^{1/9} + 2(463p^{1/4} +p^{1/4} \pi^2 /24 +p^{1/4} 2 \pi^2 /3 + \lambda)}{p^2} \right)\\
			&\geq \exp \left(-\frac{2p^{1/9}+2 \lambda}{p^2} \right),
\end{align*}
completing the proof of the lemma.

\end{proof}

The bound from Lemma \ref{L:span_small_L} can be further extended to an estimate of the probability that an arbitrarily large rectangle is internally spanned.

\begin{lemma}\label{L:span_big_L}
There is a $p_2 >0$ such that if $p<p_2$ and $n > p^{-17/8}$, 
\[
I(n,p) \geq \exp\left(-\frac{(2 \lambda + 3p^{1/9})}{p^2} \right).
\]
\end{lemma}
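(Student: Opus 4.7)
The plan is to extend the construction of Lemma \ref{L:span_small_L} by continuing its nested-rectangle growth beyond $[N_0]^2$, where $N_0 = \lfloor p^{-17/8}\rfloor$, until the growing square reaches dimension $n$. I would first record that the $\mathcal{R}$-process is monotone in the initial configuration: if $X_0 \subseteq X'_0$, a straightforward induction on $t$ shows $X_t \subseteq X'_t$, since the condition $|(N(\mathbf{x})\cup\{\mathbf{x}\})\cap X|\geq 2$ is monotone in $X$. Hence internal spanning is an increasing event, and Harris's Lemma \eqref{E:FKG_Harris} combines all the events in the construction multiplicatively.

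With $m = \lfloor p^{-1/4}\rfloor$, extend the sequence $(h_j)$ used in Lemma \ref{L:span_small_L} by $h_{j+1}=h_j+m$ for $j\geq n_0$, stopping at an index $n'$ with $h_{n'}=n$ (minor divisibility adjustments as already carried out in Lemma \ref{L:span_small_L}). Let $S'$ be the event that (i) the explicit increasing event from Lemma \ref{L:span_small_L} holds for $[N_0]^2$, (ii) for each additional index $j$ the strips $[h_j{+}1,h_{j+1}]\times[1,h_j]$ and $[1,h_j]\times[h_j{+}1,h_{j+1}]$ are traversable in the appropriate direction, and (iii) the columns $\{h_j\}\times[1,h_{j-1}]$ and $[1,h_{j-1}]\times\{h_j\}$ each contain two adjacent infected sites. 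These events are increasing and depend on pairwise disjoint parts of $X_0 \cap [n]^2$, so Harris's Lemma yields a product lower bound on $\prob(S')$, and $S'$ implies that $[n]^2$ is internally spanned by the same inductive argument that drives the proof of Lemma \ref{L:span_small_L}.

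For each additional step, an extension of Lemma \ref{L:crossing_P1} to heights $h \geq N_0$ (its proof applies essentially verbatim, and the 2-tile comparison only improves for larger $h$ since $\beta((1-p^2)^h)\to 1$ as $h\to\infty$) gives $\prob(T_j) \geq e^{-463 h_j m p^{5/2}}(1-u_j^8)\, e^{-m g(q^2 h_j)}$ with $u_j=(1-p^2)^{h_j}$. Since $q^2 h_j \geq p^{-1/8}$ throughout the extension, the cumulative main term $\sum m g(q^2 h_j)$ is bounded by $q^{-2}\int_{p^{-1/8}/2}^\infty g(y)\,dy$, which decays faster than any polynomial in $p$ by Fact \ref{Fact:gconvex} and the bound $g(x) \leq -\log(1-e^{-11x})$; the boundary factors $(1-u_j^8) \geq 1-e^{-8p^{-1/8}}$ and the adjacent-pair probabilities are similarly negligible against the budget $p^{1/9}/p^2$. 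The main obstacle is the cumulative error $\sum_{j\geq n_0} 463\, h_j m p^{5/2}$: a naive estimate makes this sum grow like $n^2 p^{5/2}$, which exceeds $p^{-17/9}$ once $n$ is much larger than $p^{-79/36}$. I would resolve this by tightening the proof of Lemma \ref{L:crossing_P1}, lowering the threshold on $|A|$ from $hmp^{3/2}$ closer to its mean $\sim 8hmp^2$ so that the per-strip site-to-tile correction shrinks from $e^{-O(hmp^{5/2})}$ to $e^{-O(hmp^3)}$, keeping the cumulative error within budget uniformly in $n > N_0$. Multiplying the resulting growth factor by $\prob(S) \geq \exp(-(2\lambda+2p^{1/9})/p^2)$ from Lemma \ref{L:span_small_L} then produces the claimed bound $I(n,p) \geq \exp(-(2\lambda + 3p^{1/9})/p^2)$.
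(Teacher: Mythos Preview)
Your construction is sound deterministically, but the proposed error budget does not close. Even after your tightening of Lemma~\ref{L:crossing_P1} to give a per-strip comparison factor of $e^{-O(h_j m p^3)}$ in place of $e^{-O(h_j m p^{5/2})}$, the cumulative correction over the extension is of order
\[
\sum_{j:\ h_j \le n} h_j\, m\, p^{3} \;\asymp\; \frac{n}{m}\cdot n\, m\, p^{3} \;=\; n^{2} p^{3},
\]
which is \emph{not} bounded uniformly in $n$: it exceeds the allowance $p^{1/9}/p^{2}$ as soon as $n \gg p^{-22/9}$. The site-to-tile comparison loss in Lemma~\ref{L:crossing_P1} is proportional to the \emph{area} of the strip being traversed, and no choice of threshold on $|A|$ can make that area-proportional loss summable over rectangles of total area $\Theta(n^2)$. (Your claim that the proof of Lemma~\ref{L:crossing_P1} ``applies essentially verbatim'' for $h>p^{-17/8}$ is also optimistic: several estimates there use $p^2 h \le p^{-1/8}$ explicitly.)

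The paper sidesteps this entirely by dropping the traversability requirement once the spanned square has side $N_0=\lfloor p^{-17/8}\rfloor$. From that point it grows one row and one column at a time: if $[j-1]^2$ is already fully infected, then a \emph{single} pair of adjacent initially infected sites in $\{j\}\times[1,j-1]$ (respectively $[1,j-1]\times\{j\}$) suffices to infect the whole new column (respectively row), since each site there already has an infected neighbour inside the square. No tile comparison is needed, and the resulting sum
\[
\sum_{j>N_0} -\log\bigl(1-(1-p^2)^{\lfloor j/2\rfloor}\bigr)
\;\le\; \frac{1}{p^2}\int_{q^2 N_0/2}^{\infty}\!-\log(1-e^{-x})\,dx
\;\le\; \frac{(5/2)\,e^{-p^{-1/8}/4}}{p^2}
\]
is bounded by $p^{1/9}/p^{2}$ uniformly in $n$. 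Multiplying by the bound $\exp\bigl(-(2\lambda+2p^{1/9})/p^2\bigr)$ from Lemma~\ref{L:span_small_L} gives the stated result. Replacing your events (ii) for $j\ge n_0$ by this much weaker ``one adjacent pair per new column/row'' event fixes your argument immediately.
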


\begin{proof}
The main idea of the proof is that the grid $[n]^2$ is internally spanned if  the sub-square $[1,\lfloor p^{-17/8}\rfloor]^2$ is internally spanned and the rest of the grid contains many rows and columns with pairs of adjacent, initially infected sites that allow the infection to spread  one row and column at a time from this sub-square.

In particular, $[n]^2$ is internally spanned if the following events occur
\begin{itemize}
	\item the square $[\lfloor p^{-17/8}\rfloor]^2$ is internally spanned, and
	\item for each $j = \lfloor p^{-17/8} \rfloor + 1, \ldots, n$, the rectangles $\{j\} \times [1,j-1]$ and $[1, j-1] \times \{j\}$ both contains pairs of adjacent sites that are initially infected.
\end{itemize}
Let $S'$ be the above event and note that $S'$ is the intersection of many independent events.  

For each $j =  \lfloor p^{-17/8} \rfloor + 1, \ldots, n$, let $S_j$ be the event that $\{j\} \times [1,j-1]$ contains a pair of adjacent sites that are initially infected. Note that $\prob(S_j) = \prob([1,j-1] \times \{j\} \text{ contains a pair of adj. initially inf. sites})$ also.  Then, as in the proof of Lemma \ref{L:span_small_L},
$\prob(S_j) \geq 1- (1-p^2)^{\lfloor j/2 \rfloor}$
 and hence
\begin{align*}
\prob\left(\bigcap_{j=\lfloor p^{-17/8} \rfloor +1}^{n} S_j\right)^2
	&=\prod_{j=\lfloor p^{-17/8} \rfloor +1}^n \prob(S_j)^2\\
	&\geq \prod_{j=\lfloor p^{-17/8} \rfloor +1}^n (1- (1-p^2)^{\lfloor j/2 \rfloor})^2\\
	&= \exp \left( 2\sum_{j=\lfloor p^{-17/8} \rfloor +1}^{n} \log(1-e^{-q^2(j-1)/2}) \right)\\
	& \geq \exp \left(-\frac{2}{p^2} \int_{q^2(\lfloor p^{-17/8} \rfloor +1)/2}^{\infty}-\log (1-e^{-x})\ dx \right).
%	&\geq 1- 4 \sum_{j=\lfloor p^{-17/8}/2 \rfloor}^{\infty} (1-p^2)^{j}\\
%	&= 1- 4 \frac{(1-p^2)^{\lfloor p^{-17/8}/2 \rfloor}}{p^2}\\
%	&\geq 1-4 \frac{e^{-p^2(p^{-17/8}/2 - 1)}}{p^2}\\
%	&\geq 1-4e \frac{e^{-p^{-1/8}/2}}{p^2}
\end{align*}

It is straightforward to check that for every $k \geq 1$, the inequality
\[
 \int_{k}^{\infty}{\left(-\log(1-e^{-x})\right)\ dx} \leq \frac{5}{4}e^{-k}
 \]
 is satisfied.  Thus,
\begin{align*}
\prob(\cap_{j=\lfloor p^{-17/8} \rfloor +1}^{n} S_j)^2	&\geq \exp \left(-\frac{5/2e^{-q^2(\lfloor p^{-17/8} \rfloor +1)/2}}{p^2}\right)\\
		&\geq \exp \left(-\frac{5/2e^{-p^{-1/8}/4}}{p^2}\right)\\
		&\geq \exp\left(-\frac{p^{1/9}}{p^2}\right)
\end{align*}
and so $I(n, p) \geq \exp\left(-\frac{p^{1/9}}{p^2}\right) I(\lfloor p^{-17/8} \rfloor, p)$ and by the previous lemma,
\[
I(n,p)
	\geq \exp\left(-\frac{p^{1/9}}{p^2}\right) \exp\left(-\frac{2p^{1/9} + 2 \lambda}{p^2}\right) = \exp\left(-\frac{3p^{1/9} + 2\lambda}{p^2}\right)
\]
as claimed.
\end{proof}

Following an argument similar to that used by Holroyd \cite{aH03} for the analysis of the usual bootstrap process, Lemma \ref{L:span_big_L} is used to show that if $p^2 \log n > \lambda$, then $I(n,p)$ is close to $1$.

\begin{theorem}\label{T:lb}
For every $\varepsilon>0$, there exists $n_0 \in \mathbb{Z}^+$ such that if $n \geq n_0$ and $p \in (0,1)$ with $p \geq \sqrt{\frac{\lambda + \varepsilon}{\log n}}$ then
\[
I(n,p) \geq 1-3\exp(-n^{\varepsilon/6}).
\]
\end{theorem}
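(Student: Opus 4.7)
The plan is to follow the ``seed plus growth'' strategy from Holroyd's proof for standard $2$-neighbour bootstrap percolation, adapted to use Lemma~\ref{L:span_big_L} as the seed-production step and the adjacent-pair growth mechanism of Lemma~\ref{L:span_big_L} as the growth step. I will first show that with probability at least $1-\exp(-n^{\varepsilon/6})$ some small sub-square of $[n]^2$ is internally spanned, and then show that, with probability at least $1-2\exp(-n^{\varepsilon/6})$, every such spanned sub-square would grow under $\mathcal{R}$ to cover $[n]^2$ through adjacent infected pairs in the surrounding rows and columns. A union bound then yields $I(n,p)\geq 1-3\exp(-n^{\varepsilon/6})$.

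For the seed step, fix $\alpha>0$ small (for instance $\alpha=\varepsilon/(2(\lambda+\varepsilon))$) and set $L=\lceil n^{\alpha}\rceil$. Since $1/p^2\leq \log n/(\lambda+\varepsilon)$, the value $p^{-17/8}$ is polylogarithmic in $n$, so $L>p^{-17/8}$ for $n$ large. Partition $[n]^2$ into $\lfloor n/L\rfloor^2$ disjoint $L\times L$ sub-squares; the events ``sub-square $i$ is internally spanned'' are mutually independent because they depend on disjoint sets of initial infections. By Lemma~\ref{L:span_big_L},
\[
I(L,p)\ \geq\ \exp\!\left(-\frac{2\lambda+3p^{1/9}}{p^2}\right)\ \geq\ n^{-2\lambda/(\lambda+\varepsilon)-o(1)}.
\]
The expected number of spanned sub-squares is therefore at least $n^{2\varepsilon/(\lambda+\varepsilon)-2\alpha-o(1)}$, which for the chosen $\alpha$ exceeds $n^{\varepsilon/6}$ for $n$ large. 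Independence of the sub-squares gives $\prob(\text{no spanned sub-square})\leq(1-I(L,p))^{\lfloor n/L\rfloor^2}\leq\exp(-n^{\varepsilon/6})$.

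For the growth step, for each $L\times L$ sub-square $S=[a,a+L-1]\times[b,b+L-1]$, define $G_S$ to depend only on sites outside $S$: $G_S$ says that (i) each column $\{c\}\times[b,b+L-1]$ with $c\in[1,n]\setminus[a,a+L-1]$ contains a vertically adjacent initially infected pair, and (ii) each row $[1,n]\times\{r\}$ with $r\in[1,n]\setminus[b,b+L-1]$ contains a horizontally adjacent initially infected pair. Arguing exactly as in the proof of Lemma~\ref{L:span_big_L}, whenever $S$ is internally spanned and $G_S$ holds, the infection first fills the horizontal strip $[1,n]\times[b,b+L-1]$ column by column (each new column being completed by $\mathcal{R}$ thanks to a persistent adjacent pair together with the already-infected neighbouring column), and then fills the remaining rows row by row. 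Each column condition fails with probability at most $(1-p^2)^{\lfloor L/2\rfloor}\leq\exp(-p^2L/3)$ and each row condition with probability at most $\exp(-p^2n/3)$; since $p^2L\geq(\lambda+\varepsilon)n^{\alpha}/\log n$, a union bound over the $\leq 4n$ rows/columns and the $\leq n^2$ choices of $S$ bounds the probability that some $G_S$ fails by $O(n^3)\exp(-p^2L/3)\leq 2\exp(-n^{\varepsilon/6})$, provided $\alpha>\varepsilon/3$. On the intersection ``some seed is spanned'' $\cap$ ``every $G_S$ holds'', the grid $[n]^2$ is internally spanned, and a further union bound gives the theorem.

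The main obstacle is calibrating $\alpha$ so that both steps succeed with probability at least $1-\exp(-n^{\varepsilon/6})$: $\alpha$ must be small enough that the $(n/L)^2$ independent sub-squares comfortably overcome the $n^{-2\lambda/(\lambda+\varepsilon)+o(1)}$ per-square spanning probability, yet large enough that $p^2L$ is a definite positive power of $n$ so that the per-column growth probability is superpolynomially close to $1$. The choice $\alpha=\varepsilon/(2(\lambda+\varepsilon))$ achieves this balance in the relevant range, and the numerical smallness of $\lambda\approx 0.0779$ leaves ample room. The one design choice that matters is that $G_S$ is defined using sites disjoint from $S$, so that $G_S$ is genuinely independent of the seed event $E_S$ and the two steps combine by a simple union bound without any conditioning.
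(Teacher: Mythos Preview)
Your argument is correct and follows the same seed-plus-growth scheme as the paper, with Lemma~\ref{L:span_big_L} supplying the seed and adjacent-pair growth filling out the grid. The one genuine difference is how independence between the seed and growth steps is secured: the paper uses two-round sprinkling, splitting $p$ into $p_1=\sqrt{(\lambda+\varepsilon/2)/\log n}$ and $p_2=(\varepsilon/2)/\log n$, so that the seed event (in $X_0'$) and a single global growth event $A_r\cap A_c$ (in $X_0''$) are independent by construction; you instead keep a single round, define each $G_S$ on sites disjoint from $S$, and absorb an extra factor $O(n^2)$ from the union bound over sub-squares, which is harmless against the $\exp(-n^{\Theta(1)})$ tails. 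Both routes work; yours avoids the sprinkling device at the cost of a slightly less clean growth event. One small point to make explicit: begin by reducing via monotonicity of $\mathcal{R}$ (and hence of $I(n,\cdot)$) to the boundary value $p=\sqrt{(\lambda+\varepsilon)/\log n}$, so that Lemma~\ref{L:span_big_L} is applicable; the paper does this step explicitly.
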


\begin{proof}
Fix $\varepsilon>0$ and $n_0 \geq 0$ large enough so that Lemma \ref{L:span_big_L}  applies for any $p$ with $p \leq \sqrt{\frac{\lambda+\varepsilon/2}{\log n_0}}$.

Fix $n \geq n_0$ and $p\in (0,1)$ with $p \geq \sqrt{\frac{\lambda + \varepsilon}{\log n}}$.  Note that by coupling, if $p' < p$ then $I(n,p') \leq I(n,p)$ and so it suffices to prove the claimed bound for $p=\sqrt{\frac{\lambda + \varepsilon}{\log n}}$.

Instead of randomly infecting all sites at once, sites are infected in two `rounds'.  Two random configurations of infected sites are independently coupled so that a large sub-rectangle of $[n]^2$ is likely to be internally spanned by sites from the first configuration and that, using only sites from the second configuration, the infection is able to spread row by row and column by column from this rectangle to the entire grid.

Set $p_1 = \sqrt{\frac{\lambda+\varepsilon/2}{\log n}}$ and $p_2 = \frac{\varepsilon/2}{\log n}$.  Define one set $X_0 \sim \operatorname{Bin}([n]^2, p)$.  Let $X_0' \sim \operatorname{Bin}([n]^2, p_1)$ and $X_0'' \sim \operatorname{Bin}([n]^2, p_2)$ be coupled with $X_0$ so that $X_0' \cup X_0'' \subseteq X_0$.  This is possible since for $n \geq 75$, $p_1 + (1-p_1)p_2 \leq p$.

Set $\ell = \left\lfloor \exp\left(\frac{\varepsilon}{8p_1^2}\right)\right\rfloor$.  Note that since $\lambda < 1/8$,
\[
\ell = \left\lfloor \exp\left(\frac{\varepsilon}{8p_1^2}\right)\right\rfloor
	=\lfloor n^{\frac{\varepsilon}{8(\lambda + \varepsilon/2)}} \rfloor
	\leq n^{\varepsilon}
	<n.
\]
Divide the grid $[n]^2$ into $\lfloor n/\ell \rfloor^2$ disjoint $\ell \times \ell$ sub-grids, with potentially some remainder: $\{[k\ell +1, (k+1)\ell] \times [j\ell +1, (j+1)\ell]:\ k,j \in [0, \lfloor n/\ell \rfloor -1]\}$.  For each of these $\ell \times \ell$ sub-grids, the probability that the sub-grid is internally spanned by $X_0'$ is $I(\ell, p_1)$.  The probability that none of these $\ell \times \ell$ sub-grids are internally spanned is
\begin{align*}
(1-I(\ell, p_1))^{\lfloor n/\ell \rfloor^2}
	&\leq (1-I(\ell, p_1))^{\frac{n^2}{2\ell^2}}\\
	&\leq \exp \left(-\frac{n^2}{2\ell^2}I(\ell, p_1) \right).
\end{align*}  
Now,
\begin{align*}
\frac{n^2}{2\ell^2} I(\ell, p_1)
	&\geq \frac{n^2}{2n^{\varepsilon/8\lambda}}\exp\left(-\frac{2\lambda + 3p_1^{1/4}}{p_1^2} \right)
			&&\text{(by Lemma \ref{L:span_big_L})}\\
	&\geq \frac{1}{2}n^{2-\frac{\varepsilon}{8\lambda}}\exp\left(-\frac{2\lambda + 3p_1^{1/4}}{\lambda+ \varepsilon/2}\log n\right)\\
	&\geq \frac{1}{2}n^{2-\frac{\varepsilon}{8\lambda}}\exp\left(-\left(2-\frac{\varepsilon}{\lambda}\right)\log n\right)\\
	& = \frac{1}{2}n^{2-\frac{\varepsilon}{8\lambda}}n^{-2 + \frac{\varepsilon}{\lambda}}\\
	&= \frac{1}{2}n^{\frac{7 \varepsilon}{8\lambda}}\\
	&\geq n^{\frac{3\varepsilon}{4\lambda}}.
			&&\left(\text{for $n \geq \exp\left(\frac{1}{2\varepsilon} \right)$}\right)
\end{align*}
Let $S$ be the event that at least one $\ell \times \ell$ sub-grid is internally spanned by $X_0'$.  Then, since $\lambda \leq 1/12$,
\begin{equation}\label{E:existslbyl}
\prob(S) \geq 1-\exp(-n^{9\varepsilon}).
\end{equation}

Next, consider the probability that an internally spanned $\ell \times \ell$ sub-grid, together with sites in $X_0''$ will percolate in $[n]^2$.  As in Lemma \ref{L:span_big_L} the probability of this occurring is bounded below by the probability that, in many rows and columns, there are pairs of adjacent infected sites.

Let $A_r$ be the event that for every $k$ and $j$ with $0\leq k \leq \lfloor n/\ell \rfloor-1$ and $1 \leq j \leq n$, the row $[k\ell+1, (k+1)\ell] \times \{j\}$ contains at least two adjacent infected sites in $X_0''$.  Then
\begin{align*}
\prob(A_r) \geq (1-(1-p_2^2)^{(\ell-1)/2})^{n\lfloor n/\ell \rfloor}
	&\geq (1-\exp(-p_2^2(\ell-1)/2))^{n^2/\ell}\\
	&\geq \exp\left(-\frac{2n^2}{\ell} e^{-p_2^2 \ell/3}\right).
\end{align*}
Now, for $n$ large enough, $(\log n)^2 \leq \frac{\varepsilon^2}{12} n^{\frac{\varepsilon}{72(\lambda + \varepsilon/2)}}$ and so
\[
\frac{p_2^2 \ell}{3} = \frac{\varepsilon^2 \lfloor n^{\varepsilon/8(\lambda+\varepsilon/2)} \rfloor}{12(\log n)^2} \geq n^{\frac{\varepsilon}{9(\lambda + \varepsilon/2)}}.
\]
Similarly, for $n$ sufficiently large, depending on $\varepsilon$, 
\[
2n^{2-\frac{\varepsilon}{8(\lambda+\varepsilon/2)}}\exp(-n^{\frac{\varepsilon}{9(\lambda + \varepsilon/2)}}) \leq \exp(-n^{\frac{\varepsilon}{10(\lambda + \varepsilon/2)}}) \leq \exp(-n^{\varepsilon/6}),
\]
and hence 
\begin{equation}\label{E:allrows}
\prob(A_r) \geq \exp(-e^{-n^{\varepsilon/6}}).
\end{equation}

Similarly, define $A_c$ to be the event that for every $k$ and $j$ with $0\leq k \leq \lfloor n/\ell \rfloor-1$ and $1 \leq j \leq n$, the column $\{j\} \times [k\ell+1, (k+1)\ell]$ contains at least two adjacent infected sites in $X_0''$.  Then $\prob(A_c) = \prob(A_r)$ and since the events $A_c$ and $A_r$ are both increasing events, by Harris's Lemma (inequality \eqref{E:FKG_Harris}), $\prob(A_c \cap A_r) \geq \prob(A_c)\prob(A_r)$.  Now, if both events $S$ and $A_c \cap A_r$ occur, then $[n]^2$ is internally spanned by the set of initially infected sites $X_0' \cup X_0''$.  Thus,
\begin{align*}
I(n,p)	&\geq \prob(S) \prob(A_c \cap A_r)\\
		&\geq \prob(S) \prob(A_c) \prob(A_r)\\
		&\geq (1-\exp(-n^{9\varepsilon})) \exp(-2e^{-n^{\varepsilon/6}})
			&&\text{(by eqns. \eqref{E:existslbyl} and \eqref{E:allrows})}\\
		&\geq 1-2\exp(-n^{\varepsilon/6}) - \exp(-n^{9\varepsilon})\\
		&\geq 1-3\exp(-n^{\varepsilon/6}).
\end{align*}
For $n$ sufficiently large, depending on $\varepsilon$ and if $p \geq \sqrt{\frac{\lambda+\varepsilon}{\log n}}$, then  $I(n,p) \geq 1-3\exp(-n^{\varepsilon/6})$.
\end{proof}

In particular, for every $\varepsilon>0$ and any sequence $\{p(n)\}_{n \in \mathbb{N}} \subseteq (0,1)$ with the property that for all $n \in \mathbb{N}$,  $p(n) \geq \sqrt{\frac{\lambda + \varepsilon}{\log n}}$, then
\[
I(n,p(n)) \geq 1-3\exp(-n^{\varepsilon/6}) = 1-o(1)
\]
and so with high probability, a random set of initially infected sites $X_0 \sim \operatorname{Bin}([n]^2, p(n))$ percolates in the recovery bootstrap process.  Thus the critical probability satisfies
\[
p_c([n]^2, \mathcal{R}) \leq \sqrt{\frac{\lambda + o(1)}{\log n}}.
\]

\section{Upper bound for probability of percolation}\label{S:ub}

\subsection{Traversing rectangles and growing rectangles}

In the previous section, some infected sites were omitted from any set of initially infected sites to produce the set $X^-$ that could be compared to the scheme of infection with $2$-tiles.  To obtain an upper bound for the probability of percolation in the recovery bootstrap process, an alteration of the initial configuration is defined that is different from the one given in Section \ref{S:lb}.   An initial configuration of infected sites $X$ is altered to produce a new configuration $X^+$ that can be more easily compared to the process of infecting sites with $2$-tiles, but in such a way that if $X$ percolates in $\mathcal{R}$, then so does $X^+$.  The idea is to uninfect isolated sites that do not affect the final infection status of any of their neighbours, while including some new infected sites next to isolated sites that have a chance of affecting whether or not their neighbours become infected.

%In the following, it will often be more convenient to consider $\ell^1$ distances on the grid.  For any $n \in \mathbb{N}$ and site $x = (x_1, x_2)$, let $B(x,n) = \{(y_1, y_2):\ |x_1 - y_1| + |x_2 - y_2| \leq n\}$ be the closed ball of radius $n$ around the site $x$ in the $\ell^1$ metric.  

For convenience, denote $e_1 = (1,0)$, $e_2 = (0,1)$, $e_3 = (-1,0)$ and $e_4 = (0,-1)$.  Recall the definitions of two different types of distances on the grid: balls in the $\ell_{\infty}$ metric are written $B_r^*(\mathbf{x})$ while balls in the $\ell_1$ metric are written $B_1(\mathbf{x})$ (see \ref{D:ell1ball} and \ref{D:ellmaxball}).

\begin{definition}
For any $X \subseteq \mathbb{Z}^2$, define $X^+ \subseteq \mathbb{Z}^2$ as follows:
\begin{itemize}
	\item If $\mathbf{x}\in X$ with $B_1^*(\mathbf{x}) \cap X \neq \{\mathbf{x}\}$, then $\mathbf{x} \in X^+$.
	\item If $\mathbf{x} \in X$ with $B_2(\mathbf{x}) \cap X = \{\mathbf{x}\}$ then $\mathbf{x} \notin X^+$.
	\item If $\mathbf{x} \in X$ is isolated and for some $i \in \{1, 2, 3, 4\}$, $\mathbf{x} + 2e_i \in X$, then 
		\begin{itemize}
			\item if $B_2(\{\mathbf{x}, \mathbf{x}+e_i, \mathbf{x} +2e_i\}) \cap X \setminus \{\mathbf{x}, \mathbf{x}+2e_i\} = \emptyset$ then $\mathbf{x} \notin X^+$, and 
			\item  if $B_2(\{\mathbf{x}, \mathbf{x}+e_i, \mathbf{x} +2e_i\}) \cap X \setminus \{\mathbf{x}, \mathbf{x}+2e_i\} \neq \emptyset$ then $\mathbf{x}, \mathbf{x}+e_i \in X^+$.
		\end{itemize}
\end{itemize}
\end{definition}

Figure \ref{F:X+uninfect} shows the configurations of  infected sites in $X$ that are uninfected in $X^+$.  The shaded sites represent infected sites and uninfected sites are represented by sites containing empty circles.

\begin{figure}[h]
	\begin{center}
	\begin{tikzpicture}[line width=1pt, scale = 0.6]
	\draw[step=0.8cm] (-0.4, -0.4) grid (4.4, 4.4);
		\filldraw[fill=black] (1.6, 1.6) rectangle (2.4, 2.4);
		
		\draw (0.4,2) circle (0.1cm); 
		\draw (1.2,1.2) circle (0.1cm);
		\draw (1.2,2) circle (0.1cm); 
		\draw (1.2,2.8) circle (0.1cm);
		\draw (2, 0.4) circle (0.1cm);
		\draw (2,1.2) circle (0.1cm);
		\draw (2,2.8) circle (0.1cm); 
		\draw (2,3.6) circle (0.1cm);
		\draw (2.8,1.2) circle (0.1cm);
		\draw (2.8,2) circle (0.1cm); 
		\draw (2.8,2.8) circle (0.1cm);
		\draw (3.6, 2) circle (0.1cm);
	
	\draw[step=0.8cm] (5.2, -0.4) grid (11.6, 4.4);
	\filldraw[fill=black] (7.2, 1.6) rectangle (8, 2.4);
	\filldraw[fill=black] (8.8, 1.6) rectangle (9.6, 2.4);
	
		\draw (6,2) circle (0.1cm); 
		\draw (6.8,1.2) circle (0.1cm);
		\draw (6.8,2) circle (0.1cm); 
		\draw (6.8,2.8) circle (0.1cm);
		\draw (7.6, 0.4) circle (0.1cm);
		\draw (7.6,1.2) circle (0.1cm);
		\draw (7.6,2.8) circle (0.1cm); 
		\draw (7.6,3.6) circle (0.1cm);
		\draw (8.4, 0.4) circle (0.1cm);
		\draw (8.4,1.2) circle (0.1cm);
		\draw (8.4,2) circle (0.1cm);
		\draw (8.4,2.8) circle (0.1cm); 
		\draw (8.4,3.6) circle (0.1cm);
		\draw (9.2, 0.4) circle (0.1cm);
		\draw (9.2,1.2) circle (0.1cm);
		\draw (9.2,2.8) circle (0.1cm); 
		\draw (9.2,3.6) circle (0.1cm);
		\draw (10,1.2) circle (0.1cm);
		\draw (10,2) circle (0.1cm); 
		\draw (10,2.8) circle (0.1cm);
		\draw (10.8,2) circle (0.1cm);

\end{tikzpicture}

	\end{center}
	%\centerline{\includegraphics[scale=1]{Zplusdel.pdf}}
		\caption{Sites from $X$ that are uninfected}
		\label{F:X+uninfect}
\end{figure}
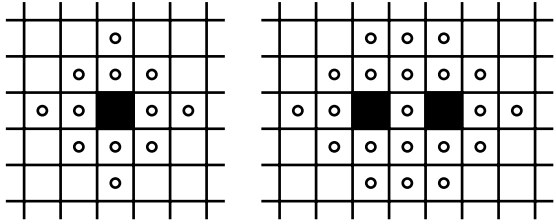

Figure \ref{F:X+infect} shows an isolated site $\mathbf{x}$ with $\mathbf{x}, \mathbf{x}+2e_1 \in X$.  If any
other site inside the outlined region is infected (in $X$), then $\mathbf{x}$ and $\mathbf{x}+e_1$ (the site containing a shaded circle) are included in $X^+$.

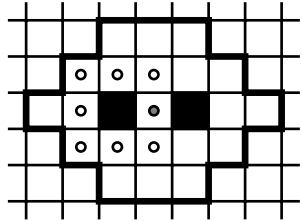
\begin{figure}[h]
	\begin{center}
	\begin{tikzpicture}[line width=1pt, scale = 0.6]
\draw[step=0.8cm] (5.2, -0.4) grid (11.6, 4.4);
	\filldraw[fill=black] (7.2, 1.6) rectangle (8, 2.4);
	\filldraw[fill=black] (8.8, 1.6) rectangle (9.6, 2.4);
	
		\draw (6.8,1.2) circle (0.1cm);
		\draw (6.8,2) circle (0.1cm); 
		\draw (6.8,2.8) circle (0.1cm);
		\draw (7.6,1.2) circle (0.1cm);
		\draw (7.6,2.8) circle (0.1cm); 
		\draw (8.4,1.2) circle (0.1cm);
		\filldraw[fill=gray] (8.4,2) circle (0.1cm);
		\draw (8.4,2.8) circle (0.1cm); 

		\draw[line width=2.5pt] (5.6, 1.6) 	-- ++(0,0.8) 
								-- ++(0.8,0)
								-- ++(0,0.8) 
								-- ++(0.8,0)
								-- ++(0,0.8) 
								-- ++(2.4,0) 
								-- ++(0,-0.8)
								-- ++(0.8, 0)
								-- ++(0,-0.8)
								-- ++(0.8,0)
								-- ++(0, -0.8)
								-- ++(-0.8, 0)
								-- ++(0, -0.8)
								-- ++(-0.8, 0)
								-- ++(0, -0.8)
								-- ++(-2.4, 0)
								-- ++(0, 0.8)
								-- ++(-0.8, 0)
								-- ++(0, 0.8)
								-- cycle;
\end{tikzpicture}
\end{center}
	%\centerline{\includegraphics[scale=1]{Zplusadd.pdf}}
		\caption{Sites included in $X^+$}
		\label{F:X+infect}
\end{figure}
Call any such configuration of three infected sites in $X$ a \emph{triplet}.  In Figure \ref{F:triplets}, the different types of triplets are shown with the associated sites marked with an empty circle.  Considering rotations and reflections, there are $2$ triplets of each of the first and second type, $8$ triplets of each of the third and fourth type, and $4$ triplets of each of the fifth and sixth types.  Thus, in total, there are $28$ different triplets.

\begin{figure}[h]
\begin{center}
\begin{tikzpicture}[line width=1pt, scale = 0.8]
	\draw[step=0.6cm] (-0.3, -0.3) grid ++(1.2, 3.6);
		\filldraw[fill=black, opacity=0.9] (0,0) rectangle ++(0.6, 0.6);
		\filldraw[fill=black, opacity=0.9] (0,1.2) rectangle ++(0.6, 0.6);
		\filldraw[fill=black, opacity=0.9] (0,2.4) rectangle ++(0.6, 0.6);
		\draw (0.3,0.9) circle (0.075cm);
		\draw (0.3,2.1) circle (0.075cm);
		
	\draw[step=0.6cm] (1.5, 0.3) grid ++(1.2, 3);
		\filldraw[fill=black, opacity=0.9] (1.8,0.6) rectangle ++(0.6, 0.6);
		\filldraw[fill=black, opacity=0.9] (1.8,1.2) rectangle ++(0.6, 0.6);
		\filldraw[fill=black, opacity=0.9] (1.8,2.4) rectangle ++(0.6, 0.6);
		\draw (2.1,2.1) circle (0.075cm);

	\draw[step=0.6cm] (3.3, 0.3) grid ++(1.8, 3);
		\filldraw[fill=black, opacity=0.9] (3.6,0.6) rectangle ++(0.6, 0.6);
		\filldraw[fill=black, opacity=0.9] (4.2,1.2) rectangle ++(0.6, 0.6);
		\filldraw[fill=black, opacity=0.9] (4.2,2.4) rectangle ++(0.6, 0.6);
		\draw (4.5,2.1) circle (0.075cm);
	
	\draw[step=0.6cm] (5.7, 0.9) grid ++(1.8, 2.4);
		\filldraw[fill=black, opacity=0.9] (6,1.2) rectangle ++(0.6, 0.6);
		\filldraw[fill=black, opacity=0.9] (6.6,1.2) rectangle ++(0.6, 0.6);
		\filldraw[fill=black, opacity=0.9] (6.6,2.4) rectangle ++(0.6, 0.6);
		\draw (6.9,2.1) circle (0.075cm);
		
	\draw[step=0.6cm] (8.1, 0.9) grid ++(2.4, 2.4);
		\filldraw[fill=black, opacity=0.9] (8.4,1.2) rectangle ++(0.6, 0.6);
		\filldraw[fill=black, opacity=0.9] (8.4,2.4) rectangle ++(0.6, 0.6);
		\filldraw[fill=black, opacity=0.9] (9.6,1.2) rectangle ++(0.6, 0.6);
		\draw (8.7,2.1) circle (0.075cm);
		\draw (9.3,1.5) circle (0.075cm);
		
	\draw[step=0.6cm] (11.1, 0.9) grid ++(2.4, 2.4);
		\filldraw[fill=black, opacity=0.9] (11.4,1.2) rectangle ++(0.6, 0.6);
		\filldraw[fill=black, opacity=0.9] (12,2.4) rectangle ++(0.6, 0.6);
		\filldraw[fill=black, opacity=0.9] (12.6,1.2) rectangle ++(0.6, 0.6);
		\draw (12.3,1.5) circle (0.075cm);

\end{tikzpicture}

\end{center}
	%\centerline{\includegraphics[scale=1]{triplets.pdf}}
		\caption{Six types of triplets}
		\label{F:triplets}
\end{figure}
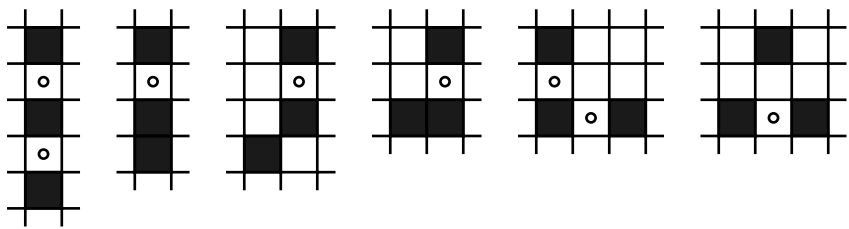

As before, pairs of infected points that form one of the four $2$-tiles are called a \emph{double}.  By definition, every site contained in a configuration $X^+$ is either contained in a double in $X$, or associated with a set of three points in $X$ that form a triplet.  Note that if $X$ percolates, then so does $X^+$ since any site in $X \setminus X^+$ either becomes uninfected in the first or second time step and does not contribute to the infection of any other sites.

In the next lemma, the probability that a rectangle $R$ is traversable by the set $X^+$ is compared to the probability that $R$ is traversable by a configuration on $2$-tiles.  As in Section \ref{S:lb}, first, rectangles with height close to $p^{-2}$ are considered.

\begin{lemma}\label{L:cross-upper}
Let $B >1$, $Z>0$, $m \in \mathbb{N}$ and set
\[
Q_1(B,Z,m) = 1500Bm + \frac{30Bm + \frac{3\cdot 25^3}{Z}}{(1-e^{-11Z})^{m-1}}.
\]
There exists $p_0 = p_0(B,Z,m)>0$ such that for all $h \in \mathbb{N}$ with $Z \leq hp^2 \leq B$ and every rectangle $R$ of dimension $(m,h)$, if $p< p_0$ and $X \sim \operatorname{Bin}(R,p)$ then
\[
\prob(R \text{ is horizontally traversable}) \leq (1 + pQ_1(B,Z,m)) e^{-g(hq^2)(m-1)}.
\]
\end{lemma}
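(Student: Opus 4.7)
The strategy is to reduce the site-model calculation to the $2$-tile analysis of Lemma~\ref{L:crossing} by way of the altered set $X^+$, and then to quantify the cost of this reduction.

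I would first show that
\[
\prob(R \text{ horiz.\ trav.\ by } X) \leq \prob(R \text{ horiz.\ trav.\ by } X^+),
\]
by arguing that the sites removed in forming $X^+$ (isolated singletons with an empty $B_2$-neighbourhood in $X$, and isolated distance-$2$ pairs whose neighbourhoods meet $X$ only in the pair itself) become uninfected within two applications of $\mathcal{R}$ and cannot contribute to the infection of any other site, while the middle-of-triplet sites added to $X^+$ only make traversability easier. By construction, every site of $X^+$ has a neighbour in $X^+$ in the $\ell_\infty$ metric, so on $X^+ \cap R$ the recovery rule agrees with the $2$-neighbour bootstrap rule, and traversability by $X^+$ is equivalent to the absence of a double gap of $X^+$-empty columns, exactly as in Lemma~\ref{L:crossing}.

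I would then set up a three-term recurrence for
\[
A'_j = \prob(\text{first } j \text{ columns of } R \text{ contain no double gap w.r.t.\ } X^+),
\]
mirroring the recurrence for $A_m$ in the proof of Lemma~\ref{L:crossing}, with the three column statuses ($2$-occupied, $1$-occupied, unoccupied) defined through the doubles and triplets of $X$ that survive into $X^+$. An inclusion-exclusion over the doubles and triplets meeting a fixed column shows that each per-column probability agrees with its $2$-tile counterpart $1-u$, $1-u^3$, $1-u^4$, $u^4$ up to relative error of order $Bp$: triplets contribute $O(hp^3)$ per column and overlapping pairs of doubles give an $O(hp^3)$ inclusion-exclusion correction, both of size $O(Bp)$ since $hp^2 \leq B$. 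Running the recurrence by induction then produces a multiplicative factor $1+O(Bp)$ per step, which accumulates over $m$ columns to $1+O(Bmp)$ and accounts for the $1500Bm$ term in $Q_1$. Additional absolute corrections of order $p^3$ per column, coming from stray triplets or double-overlaps that cannot be absorbed into the relative bound, have to be compared to the main term $\beta(u)^{m-1}$ via the inequality $\beta(u) \geq 1 - e^{-11p^2h} \geq 1 - e^{-11Z}$; this is the origin of the $(1-e^{-11Z})^{-(m-1)}$ denominator in $Q_1(B,Z,m)$.

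The main obstacle is the correlation between adjacent columns in the site model: a double straddles two columns and a triplet can occupy up to three, so the independent-columns structure enjoyed by the $2$-tile model is lost. The alteration $X \mapsto X^+$ is designed precisely to keep these correlations tractable, and the remaining work is a careful inclusion-exclusion, together with a BK-style disjoint-occurrence bound on the event of traversing a rectangle containing a forced triplet, to verify that the per-column relative correction stays at most $O(Bp)$ so that the final bound takes the claimed product form.
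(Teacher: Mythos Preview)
Your reduction to $X^+$ is correct and matches the paper's first move: sites in $X\setminus X^+$ recover without infecting anything, while each site of $X^+$ has an $\ell_\infty$-neighbour in $X^+$, so on $X^+$ the recovery rule coincides with $2$-neighbour bootstrap and traversability is exactly the ``no double gap'' event.

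Where you diverge is in the comparison itself. You propose to re-run the three-term recurrence from Lemma~\ref{L:crossing} directly on $X^+$, correcting the per-column probabilities by $O(Bp)$ via inclusion--exclusion. The difficulty you flag --- that column events for $X^+$ are not independent --- is genuine and you do not actually give a mechanism for handling it. The recurrence for $A_m$ in Lemma~\ref{L:crossing} factors precisely because the occupation status of each column is determined by an independent batch of $2$-tile coin flips; for $X^+$, whether column $j$ is occupied depends on sites in columns $j-2$ through $j+2$, and the conditioning implicit in the recurrence (on the pattern of the last two columns) then feeds back into the next step in a way that a per-column relative correction does not capture. A ``careful inclusion--exclusion'' does not by itself restore a product structure.

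The paper sidesteps this entirely by comparing at the level of configurations rather than columns. It restricts to a class $\mathcal{Q}$ of triplet-free, not-too-large configurations and shows, for each fixed $A\in\mathcal{Q}$, the pointwise bound
\[
\prob(X^+ = A) \leq (1+1500Bmp)\,\prob_2(X_{\text{tiles}} = A),
\]
obtained by writing $\{X^+=A\}\subseteq\{A\subseteq X\}\cap E_2$ and bounding $\prob(E_2)$ (no doubles or triplets in $R\setminus B_3(A)$) via Janson's inequality~\eqref{E:Jansonineq}. Summing over $A\in\mathcal{C}\cap\mathcal{Q}$ then lets one invoke the already-established Lemma~\ref{L:crossing} for $\prob_2(X_{\text{tiles}}\in\mathcal{C})$, with no new recurrence needed. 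The remaining error $\prob(X^+\notin\mathcal{Q})$ splits into the probability that $X^+$ contains a triplet (the $30Bmp$ term) and the probability that $|X^+|>|R|p$, which the paper controls with the Talagrand/McDiarmid--Reed concentration inequality~\eqref{T:tal}; this is the source of the $3\cdot 25^3/Z$ constant, not ``stray triplets or double-overlaps'' as you guessed. Both of these additive errors are then divided by $\beta(u)^{m-1}\geq(1-e^{-11Z})^{m-1}$ to produce the second summand of $Q_1$.

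In short: your plan is not wrong in spirit, but the column-recurrence route you outline does not have a clear way through the dependence problem, and you are missing the two key tools the paper uses --- Janson for the pointwise configuration comparison and Talagrand for the size control on $X^+$.
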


\begin{proof}
Fix $p>0$, $B>1$, $Z>0$, $m \in \mathbb{N}$ and let $h \in \mathbb{N}$ be such that $Z \leq hp^2 \leq B$.  Let $R$ be a rectangle of dimension $(m,h)$. Following an approach similar to that used in Lemma \ref{L:crossing_P1}, let 
\begin{align*}
\mathcal{Q}&=\{A \subseteq R:\ \text{every site in $A$ has a $\ell_{\infty}$-nbr, $A$ contains no triplets and }\\
	&\hspace{30pt} |A| \leq |R|p\}
\end{align*} 
and let $\mathcal{C}$ be the collection of configurations of infected sites for which $R$ is horizontally traversable from  left to right in the process $\mathcal{R}$.  Fix $A \in \mathcal{Q}$ and let $X \sim \operatorname{Bin}(R,p)$.  Since $R$ will be horizontally traversable by $X^+$ if $R$ is horizontally traversable by $X$, 
\[
\prob(R \text{ is horiz. trav. by } X) \leq \prob(R \text{ is horiz. trav. by } X^+).
\]
  If $X^+ = A$, then since $A$ contains no triplets and any site in $X^+ \setminus X$ is contained in a triplet, $A \subseteq X$.  Further, any site in $X \setminus A$ is isolated and not contained in a triplet.  In order to deal with independent events, consider the following two events:
  \begin{itemize}
	\item $E_1$: every site in $A$ is in $X$, and 
	\item $E_2$: there are no doubles or triplets in $X\cap (R \setminus B_3(A))$.
\end{itemize}
Since $E_1$ and $E_2$ are independent, $\prob(X^+ = A) \leq \prob(E_1)\prob(E_2) = p^{|A|} \prob(E_2)$.  In order to bound $\prob(E_2)$, a version of Janson's inequality \eqref{E:Jansonineq} is used.  

Let $(B_i)_{i\in I}$ be the sequence of events that a particular double or triplet occurs in $X\cap (R \setminus B_3(A))$.  For each site $\mathbf{x}$, there are $4$ different doubles containing $\mathbf{x}$ as the left-most and bottom-most site and there are $28$ different triplets containing $\mathbf{x}$ as the left-most and bottom-most site.  Thus, there are at most $4|R \setminus B_3(A)|$ such doubles and at most $28|R \setminus B_3(A)|$ such triplets.  Consider the number of sites in $|B_3(A)|$.  For any double $\{\mathbf{x}_1, \mathbf{x}_2\}$, $|B_3(\{\mathbf{x}_1, \mathbf{x}_2\})| = 32 = 16|\{\mathbf{x}_1, \mathbf{x}_2\}|$.  Thus, since every site in the configuration $A$ is contained in a double, $|B_3(A)| \leq 16|A|$.

Then $E_2 = \cap_{i \in I}\overline{B}_i$ and this event depends only on the $|R \setminus B_3(A)| \geq |R|-16|A|$ independent events that a particular site in $R \setminus B_3(A)$ is initially infected or not.  In order to apply inequality \eqref{E:Jansonineq}, a bound is required for the sum of probabilities of events $B_i \cap B_j$ for which $B_i$ and $B_j$ are not independent.  

Consider the number of overlapping doubles and triples.  For each site $\mathbf{x}$, there are $4$ doubles containing $\mathbf{x}$ as the anchor and $2$ sites in the double that could be overlapping with another double.  For the sites in the first double, there are $8$ different doubles containing that site.  In this way each pair of overlapping doubles is counted twice  and so there are at most $32|R \setminus B_3(A)|$ different pairs of overlapping doubles. 

 Similarly, since there are $28$ different triplets, there are at most $4\cdot 2 \cdot 3 \cdot 28 = 672$ different pairs of a triple and an overlapping double that $\mathbf{x}$ as its anchor at at most $28 \cdot 28 \cdot 3^2/2 = 3528$ pairs of triples that contain $\mathbf{x}$ as the lowest left-most site of one of the triplets.  Therefore, in all, there are $672|R \setminus B_3(A)|$ different pairs of a double and a triplet that overlap and at most $3528|R \setminus B_3(A)|$ pairs of overlapping triplets.  Since a pair of distinct doubles that overlap contain at least $3$ sites, a double and a triple that overlap contain at least $3$ sites and a pair of distinct triplets that overlap contain at least $4$ sites,
\begin{align*}
\sum_{B_i, B_j \text{ not indep.}} \prob(B_i \cap B_j) 
	&\leq (32 + 672)|R \setminus B_3(A)|p^3 + 3528|R \setminus B_3(A)|p^4\\
	& \leq 710 |R \setminus B_3(A)|p^3
\end{align*}
when $p \leq 1/588$.  Similarly, $\sum_{i \in I}\prob(B_i) \leq (4p^2 + 28 p^3)|R \setminus B_3(A)|$ and applying Janson's inequality \eqref{E:Jansonineq},
\begin{align*}
\prob(E_2) &= \prob(\cap_{i\in I} \overline{B}_i)\\
	&\leq \exp(-(4p^2 + 28 p^3)|R \setminus B_3(A)|) \exp(710 |R \setminus B_3(A)|p^3)\\
	&=\exp((-4p^2 + 682 p^3)|R \setminus B_3(A)|)\\
	&\leq \exp((-4p^2 + 682 p^3)(|R|-16|A|))\\
	&\leq \exp(-4|R|p^2+64p^2|A| + 682p^3|R|)\\
	&\leq \exp(-4p^2|R|+ 746p^3|R|).
		&&\hspace{-100pt}\text{(since $|A| \leq |R|p$)}
\end{align*}
Thus, since the event that $X^+=A$ is contained in the intersection of independent events $E_1$ and $E_2$,
\begin{align*}
\prob&(X^+ = A)\\
	&\leq p^{|A|} \exp(-4p^2|R|+746|R|p^3) \\
	& = p^{|A|}(1-p^2)^{4|R| - |A|/2}(1-p^2)^{-4|R| + |A|/2} \exp(-4p^2|R|+746|R|p^3) \\
	& = \prob_2(X_{\text{tiles}} = A) (1-p^2)^{-4|R| + |A|/2} \exp(-4p^2|R|+746|R|p^3)\\
	&\leq \prob_2(X_{\text{tiles}} = A) (1-p^2)^{-4|R|} \exp(-4p^2|R|+746|R|p^3).
\end{align*}
For $p$ sufficiently small, $1-p^2 \geq e^{-(p^2+p^4)}$ and for $x$ small enough, $e^x \leq 1+2x$.  Thus,
\begin{align*}
(1-p^2)^{-4|R|} &\exp(-4p^2|R|+746|R|p^3)\\
	&\leq  \exp(4|R|(p^2 + p^4) -4p^2|R| + 746|R| p^3)\\
	&\leq \exp(750|R|p^3)\\
	&\leq \exp\left(750\frac{B}{p^2} m p^3\right)\\
	&= \exp(750Bmp)\\
	&\leq 1+ 1500 Bmp.
\end{align*}
Therefore,
\[
\prob(X^+ = A) 
	\leq \prob_2(X_{\text{tiles}} = A)(1+1500Bmp). 
\]
This inequality can be used to compare the probability that $R$ is traversable by $X^+$ to that of $R$ being traversable by a random configuration of $2$-tiles, conditioned on either configuration being in the collection $\mathcal{Q}$.  

Consider now the probability that $X^+ \notin \mathcal{Q}$.  Since every site in $X^+$ has a neighbour, if $X^+ \notin \mathcal{Q}$ then either $X^+$ contains a triplet or $|X^+| > |R|p$.  Let $\{T_j\}_{j \in J}$ be the collection of sets of sites in $R$ that form triplets and consider first the probability that $X^+$ contains one of the triplets $T_j$.  If $T_j \subseteq X^+$ then either $T_j \subseteq X$ or else one of the sites in $T_j$ is associated with another triplet contained in $X$.  In particular, if $T_j \nsubseteq X$, then every site in $T_j \setminus X$ is adjacent to at least $2$ sites in $X$ and together with sites in $T_j \cap X$, there are at least $4$ sites in $X$.  If a site $\mathbf{x} \in T_j \setminus X$ is associated with another triplet in $X$, then either $\{\mathbf{x} +(-1,0),\mathbf{x} +(1,0) \}\subseteq X$ or $\{\mathbf{x} +(0,-1),\mathbf{x} +(0,1)\} \subseteq X$.  Very roughly then $\prob(T_j \subseteq X^+) \leq p^3 + 3^3 p^4$.  Since there are at most $28 |R|$ different triplets in $R$ and $|R| = hm \leq Bm/p^2$,
\begin{equation}\label{E:X+triples}
\prob(\cup_{j \in J}\{T_j \subseteq X^+\}) \leq \sum_{j \in J} \prob(T_j \subseteq X^+) \leq 28|R|(p^3 +27p^4) \leq 30Bmp
\end{equation}
as long as $p \leq 1/378$.  

It is slightly more complicated to determine the probability that $|X^+| \geq |R|p$ since the events that any two sites are included in $X^+$ are not, in general, independent.  

Since the membership in $X^+$ of any site is determined by at most $25$ independent events, the initial infection of sites in $X$ within a ball of radius $3$, a version of Talagrand's inequality \cite{mT95}  can be used to bound the probability that $X^+$ is large.

For every site $\mathbf{x} \in R$, 
\[
\prob(\mathbf{x} \in X^+) \leq 8p^2 + 100p^3 \leq 9p^2
\]
 when $p$ is sufficiently small.  Thus $\expect(|X^+|) \leq 10|R|p^2$.  Changing the initial infection status of one site changes the value of $|X^+|$ by at most $25$ and for any $r$, the event that $|X^+|\geq r$  can be certified by the initial infection status of  $25r$ sites.  Thus, applying Talagrand's inequality (Theorem \ref{T:tal}),
\begin{align}
\prob(|X^+| \geq |R|p)	
	&\leq \exp \left(\frac{-(|R|p - 9|R|p^2 )^2}{2\cdot 25^3|R|p}\right) \notag\\
	&\leq \exp\left(-\frac{|R|p}{3\cdot 25^3}\right)
		&&\text{(for $p \leq 1/50$)} \notag\\
	&\leq \exp\left(-\frac{Zm}{3 \cdot 25^3 p}\right)	
		&&\text{(since $h \geq Z/p^2$)} \notag\\
	&\leq \frac{3 \cdot 25^3 p}{Zm}.
		&&\text{(using $e^{-x} \leq 1/x$)} \label{E:X+big}
	%&\leq |R|p^3		&&\text{(for $|R|p^2$ constant)}
\end{align}

Thus, the probability that $X^+$ is not a configuration in $\mathcal{Q}$ can be estimated as follows.  Combining the two inequalities \eqref{E:X+triples} and \eqref{E:X+big}, yields
\begin{align}
\prob(X^+ \notin \mathcal{Q})
	&\leq \prob(X^+ \text{ contains a triplet}) + \prob(|X^+| \geq |R|p) \notag\\
	&\leq 30Bmp + \frac{3 \cdot 25^3 p}{Zm}.\label{E:notQ}
	%&=101|R|p^3. \label{E:notQ}
\end{align}

Finally, it is possible to bound from above the probability that $R$ is horizontally traversable by $X^+$ using Lemma \ref{L:crossing},
\begin{align*}
\prob(X^+ \in \mathcal{C})
	&\leq \prob(X^+ \in \mathcal{C} \cap \mathcal{Q}) + \prob(X^+ \notin \mathcal{Q})\\
	&\leq \sum_{A \in \mathcal{C} \cap \mathcal{Q}}\prob(X^+ = A) + 30Bmp + \frac{3 \cdot 25^3 p}{Zm}\\
	&\leq \sum_{A \in \mathcal{C}\cap \mathcal{Q}}\prob_2(X_{\text{tiles}} = A)(1+1500Bmp) + 30Bmp + \frac{3 \cdot 25^3 p}{Zm}\\
	&\leq \prob(X_{\text{tiles}} \in \mathcal{C})(1+1500Bmp) + 30Bmp + \frac{3 \cdot 25^3 p}{Zm}\\
	&\leq e^{-g(hq^2)(m-1)}(1+1500Bmp) + 30Bmp + \frac{3 \cdot 25^3 p}{Zm}\\
	&= e^{-g(hq^2)(m-1)}\left(1+1500Bmp + \frac{30Bmp + \frac{3 \cdot 25^3 p}{Zm}}{(1-e^{-11hp^2})^{m-1}}\right)\\
	&\leq e^{-g(hq^2)(m-1)}\left(1+ p\left(1500Bm + \frac{30Bm + \frac{3\cdot 25^3}{Z}}{(1-e^{-11Z})^{m-1}}\right) \right)\\
	&=  e^{-g(hq^2)(m-1)}\left(1+pQ_1(B,Z,m)\right).
\end{align*}

Thus $\prob(R \text{ is horiz. trav. by } X) \leq e^{-g(hq^2)(m-1)}\left(1+pQ_1(B,Z,m)\right)$.
\end{proof}

In Lemma \ref{L:cross-upper}, the width of the rectangle being traversed is arbitrary.  However, when $B$ and $Z$ are fixed, $Q_1(B, Z, m)$ is increasing in $m$ and for large values of $m$, part of the error term, $Q_1(B,Z,m)$, might become too large for this lemma to be useful for upper bounds on the the probability of percolation.  Instead of considering the probability of traversing a large rectangle all at once, it is useful to consider traversing `strips' of a fixed width one at a time.  There can potentially be dependence between the probability of crossing adjacent strips, but this can be dealt with by ignoring the infection configuration in a few columns.  The following lemma gives the details.

\begin{lemma}\label{L:cross-strips}
Fix $B>1$, $Z >0$, $p<p_0$ and $h \in [Z/p^2, B/p^2]$.  For any $m,w  \in \mathbb{N}$ with $w<m$ and  any rectangle  $R$ of dimensions $(m,h)$,
\[
\prob(R \text{ is horiz. trav.}) \leq (1+ pQ_1(B, Z,w))^{m/w + 1}e^{-g(hq^2) m(1-11/w)}.
\]
\end{lemma}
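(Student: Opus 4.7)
The plan is to partition $R$ into short strips of width roughly $w$, apply Lemma \ref{L:cross-upper} inside each strip, and then multiply the resulting per-strip probabilities. Concretely, I would decompose $R$ into consecutive vertical sub-rectangles $S_1, S_2, \ldots, S_k$ of height $h$ and width at most $w$, with $k \leq m/w + 1$. If $R$ is horizontally traversable from left to right, then each $S_i$ is also horizontally traversable by the restriction of the configuration to $S_i$ (a double gap inside $S_i$ would be a double gap inside $R$), so
\[
\prob(R \text{ is horiz. trav.}) \leq \prob\bigl(\textstyle\bigcap_i \{S_i \text{ is horiz. trav.}\}\bigr).
\]

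The subtlety is that, because traversability of a strip is defined through $X^+$ and $X^+$ depends on the initial infections in a $B_3$-neighbourhood, the traversability events of adjacent strips are not literally independent. I would remove this dependence by shrinking each $S_i$ inward by a constant number of boundary columns (large enough to exceed the radius used in the construction of $X^+$). The shrunken strips then depend on pairwise disjoint sets of initial infections, so the traversability events are honestly independent. This shrinking costs at most a constant number of columns per strip in the exponent coming from Lemma \ref{L:cross-upper}, and those constants are absorbed into the factor $11$ in $m(1-11/w)$.

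After the buffer removal, Lemma \ref{L:cross-upper} applies to each shrunken strip with width at most $w$, yielding a bound of the form $(1+pQ_1(B,Z,w))\,e^{-g(hq^2)(w-1)}$ per strip; the factor $Q_1$ is monotone enough in $m$ that the strip-level constant is $Q_1(B,Z,w)$. Multiplying over the $k \leq m/w + 1$ strips and collecting exponents then gives
\[
\prob(R \text{ is horiz. trav.}) \leq (1+pQ_1(B,Z,w))^{m/w+1}\,e^{-g(hq^2)(w-1)\lceil m/w\rceil},
\]
and a direct estimate $(w-1)\lceil m/w\rceil \geq m(1-11/w)$, incorporating the buffer loss, produces the stated bound. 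The main obstacle is the careful bookkeeping needed to certify that after shrinkage the strips really are stochastically independent, which amounts to tracking the spatial dependence radius of the $X \mapsto X^+$ transformation and choosing the buffer width uniformly in the parameters $B$ and $Z$; once that is in place the calculation is a routine product of the per-strip estimates from Lemma \ref{L:cross-upper}.
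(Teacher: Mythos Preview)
Your proposal is correct and essentially identical to the paper's proof: the paper writes $m=\ell w+r$, forms $\ell$ strips of width $w$ plus a remainder of width $r$, deletes $3$ columns from each side of every strip (since membership in $X^+$ depends only on initial infections within $\ell_1$-distance $3$) to obtain sub-rectangles $R_i'$ whose traversability events depend on disjoint sets of initial infections, applies Lemma~\ref{L:cross-upper} to each $R_i'$, and multiplies. The final bookkeeping is $(w-7)\ell+(r-4)=m-7\ell-4\geq m-11m/w$, and the monotonicity of $Q_1(B,Z,\cdot)$ in its third argument replaces $Q_1(B,Z,w-6)$ and $Q_1(B,Z,r-3)$ by $Q_1(B,Z,w)$, exactly as you anticipated.
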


\begin{proof}
Fix $w <m$ and a rectangle $R$ of dimension $(m,h)$.  Let $\ell \in \mathbb{N}$ and $0 \leq r < w$ be such that $m = \ell w +r$.  Let $R$ be any rectangle of dimension $(h,m)$ and divide $R$ into $\ell$ sub-rectangles, $R_1, R_2, \ldots, R_{\ell}$, each of height $h$ and width $w$,  with a remainder sub-rectangle of width $r$, denoted $R_0$.  

For each $i=0, 1, 2, \ldots, \ell$, it might not be the case that $R_i$ is horizontally traversable by $X$ since this event might depend on sites in adjacent sub-rectangles.  

Since membership in the set $X^+$ depends only on the initial infection of sites within distance $3$, it is at least true that the sub-rectangle of $R_i$ obtained by deleting $3$ columns from each side is horizontally traversable by $(X\cap R_i)^+$.  Denote these sub-rectangles by $R_0', R_1', \ldots, R_{\ell}'$.    Set 
\[
Q_1 = \max\{Q_1(B,Z,r-3), Q_1(B,Z, w-6)\}.
\]
Applying Lemma \ref{L:cross-upper} to the sub-rectangles $R_0', R_1', \ldots, R_{\ell}'$, 
\begin{align*}
\prob(R &\text{ is horiz. trav. by } X^+)\\
	&\leq \prod_{i=0}^{\ell}\prob(R_i' \text{ is horiz. trav. by } X^+)\\
	&\leq (1+pQ_1)^{\ell}e^{-g(hq^2)(w-7)\ell}(1+pQ_1)e^{-g(hq^2)(r-4)}\\
	&\leq (1+pQ_1)^{\ell+1}e^{-g(hq^2)(w \ell + r - 7\ell - 4)}\\
	&\leq (1+pQ_1)^{m/w+1} e^{-g(hq^2)(m-(7\ell+4))}\\
	&\leq (1+pQ_1)^{m/w+1} e^{-g(hq^2)m(1-11/w)}
\end{align*}  
yielding the desired bound on the probability that $R$ is traversable.
\end{proof}

As in Section \ref{S:lb}, consider the probability that infection spreads from a smaller rectangle to a larger one.  Previously, it was enough to examine only the probability that infection spreads from one particular square to another, but for the upper bound, a more general event is used.   

\begin{definition}
For any two rectangles $R \subseteq R'$ and $X\sim \operatorname{Bin}(R',p)$, let $D(R,R')$ be the event that $R'$ is internally spanned by $R \cup X$.
\end{definition}

  Essentially, this is the event that the four rectangles surrounding $R$ in $R'$ are traversable by the sites in $X \setminus R$.  The following lemma shows that even though these events are not independent, they are nearly so.

\begin{lemma}\label{L:D(R,R')}
For every $B\geq 1$, $Z \geq 0$ and $c \in (0, 1/6)$, there exist $T\geq 0$ and $p_1 = p_1(Z,c)$ such that for all $p \leq p_1$ and all $m$, $n$, $s$ and $t$ with $Z/p^2 \leq m,n \leq B/p^2$, and $s,t \leq T/p^2$ if $R \subset R'$ are two rectangles with dimensions $(m,n)$ and $(m+s, n+t)$, respectively, then 
\begin{multline*}
\prob(D(R,R')) \leq \\
	3(1+pQ_1(B,Z,\lceil 11/c \rceil))^{\frac{12}{11}c(s+t)+4} e^{16g(Z)-(1-6c)(sg(nq^2) + tg(mq^2))}.
\end{multline*}
\end{lemma}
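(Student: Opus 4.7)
The plan is to decompose $R' \setminus R$ into four spatially disjoint arms surrounding $R$, show that $D(R,R')$ forces each arm to be traversable in the appropriate direction, and then apply Lemma \ref{L:cross-strips} to each. Write $R = [a_1, a_2]\times[b_1,b_2]$ and $R' = [a_1', a_2']\times[b_1',b_2']$, and split the horizontal slack $s = s_1 + s_2$ and vertical slack $t = t_1 + t_2$ according to the position of $R$ inside $R'$. Define
\begin{align*}
W &= [a_1', a_1-1]\times[b_1,b_2], & E &= [a_2+1, a_2']\times[b_1,b_2],\\
N &= [a_1', a_2']\times[b_2+1,b_2'], & S &= [a_1', a_2']\times[b_1',b_1-1],
\end{align*}
so that $R' \setminus R = W \cup E \cup N \cup S$ is a disjoint union.

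The key reduction is that if $D(R,R')$ occurs, then $W$ and $E$ are horizontally traversable by $X$ (with the boundary columns of $R$ acting as the infecting sources) while $N$ and $S$ are vertically traversable by $X$. Since $R$ has both side lengths at least $Z/p^2 \geq 2$, every site in $R$ has a neighbour in $R$, so $R$ remains fully infected throughout the $\mathcal{R}$-process and provides a persistent source. The non-monotonicity of $\mathcal{R}$ is handled by the same $X^+$ device of Section \ref{S:ub} used inside the proof of Lemma \ref{L:cross-upper}: passing to $X^+$ on each arm reduces the local dynamics to the monotone $2$-tile setting in which traversability is genuinely necessary. Independence of the four traversability events then follows because $W, E, N, S$ are spatially disjoint and $X \sim \operatorname{Bin}(R',p)$ is i.i.d.

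To apply Lemma \ref{L:cross-strips}, the only subtlety is that $N$ (and symmetrically $S$), when traversed vertically, has ``height'' parameter $m+s$ which may slightly exceed $B/p^2$. Split $N$ along the vertical lines $x = a_1$ and $x = a_2 + 1$ as $N = N_W \cup N_{\mathrm{mid}} \cup N_E$; the central piece $N_{\mathrm{mid}}$ has width $m \in [Z/p^2, B/p^2]$ and fits the hypotheses directly, while the small side pieces $N_W, N_E$ (both dimensions bounded by $T/p^2$) have crossing probabilities that can be bounded very crudely and absorbed into the factor $e^{16 g(Z)}$, using that $g$ is bounded above by $g(Z)$ on $[Z, B]$. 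Now apply Lemma \ref{L:cross-strips} to each of $W, E, N_{\mathrm{mid}}, S_{\mathrm{mid}}$ with $w = \lceil 11/c \rceil$ so that $11/w \leq c$: the exponents on $(1 + pQ_1(B, Z, w))$ sum to $(s_1 + s_2 + t_1 + t_2)/w + 4 \leq c(s+t)/11 + 4$, and the exponential factors multiply to $e^{-(1-c)(s g(nq^2) + t g(mq^2))}$. A buffer of a few columns or rows between adjacent sub-rectangles preserves independence at the cost of the additional slack $5c$ in the exponent (giving $1 - 6c$) and a small constant factor in front (giving the $3$).

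The main obstacle is not the arithmetic but the second paragraph: rigorously deducing from $D(R,R')$ the separate traversability of each arm despite the non-monotonicity of $\mathcal{R}$. One must handle the fact that sites in an arm adjacent to $R$ are automatically non-isolated (so the $X^+$ alteration behaves differently at the $R$--arm interface) and rule out ``assisted'' percolation in which, for instance, $W$ becomes fully infected only because the infection first travels through $N$. The spatial separation of the arms and the persistence of $R$ are the two facts that make this work, but making the argument airtight requires careful tracking of the $X^+$ construction near the interface.
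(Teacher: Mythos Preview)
Your decomposition into four disjoint arms is clean, but the ``key reduction'' in your second paragraph is false and cannot be repaired by the considerations you list at the end. If $D(R,R')$ holds, it does \emph{not} follow that $W$ (or $E$) is horizontally traversable: because $W$ has height only $n$, a double gap in $W$ can be bypassed by the infection travelling $R \to N_{\mathrm{mid}} \to N_W \to W$ (down from the corner). Spatial separation of $W$ and $N$ does nothing to prevent this, since $W$ and $N_W$ share an edge; and persistence of $R$ is irrelevant to the obstruction. You flag exactly this ``assisted percolation'' scenario as an obstacle, but then assert that separation and persistence are ``the two facts that make this work''---they are not. No amount of careful tracking of $X^+$ near the interface fixes this, because the failure is geometric, not a matter of the recovery rule.

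The paper takes the opposite trade-off. It uses the four \emph{full-height} and \emph{full-width} strips (your $W \cup N_W \cup S_W$, $E \cup N_E \cup S_E$, $N$, $S$), which overlap in the corners. For these, $D(R,R')$ genuinely forces traversability: a double gap in a full-height side strip disconnects $R'$. The price is that the four traversability events are no longer independent, since the corner rectangles are shared. The paper handles this by conditioning on $Y = X^+$ restricted to the four corners and splitting into three cases according to whether $|Y| \le cs$, $cs < |Y| \le ct$, or $|Y| > ct$. When $|Y| \le cs$, one deletes the (at most $cs$) columns of the side strips meeting $Y$; in the remaining columns the corner rows are empty, so traversability of the full-height strip forces traversability of the height-$n$ sub-rectangles, and these are now independent of $Y$ and of the top/bottom arms (after removing a few buffer rows/columns). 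This column-deletion is the source of the extra $5c$ in $(1-6c)$ and of the $\frac{12}{11}c(s+t)$ in the exponent, neither of which your accounting produces. It is also where $T$ actually enters: $T$ is chosen small so that $\mathbb{P}(|Y| \ge cs)$ and $\mathbb{P}(|Y| \ge ct)$ are already dominated by the target bound; in your sketch $T$ plays no real role.
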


\begin{proof}
Fix $B>1$, $Z>0$, $c \in (0, 1/6)$,  $p>0$ and let $R$ be a rectangle of dimension $(m,n)$ and let $R'$ be a rectangle of dimension $(m+s, n+t)$ with $R \subseteq R'$.  Suppose, without loss of generality that $s \leq t$.  Let $R' = [a_1, a_2] \times [b_1, b_2]$ and $R = [c_1, c_2] \times [d_1, d_2]$.  The rectangle $R'$ is decomposed into $R$ together with the following $8$ sub-rectangles,
\begin{align*}
R_1 &= [a_1, c_1-1] \times [b_1, d_1-1]		&R_2&=[c_1, c_2]\times [b_1, d_1-1]\\
R_3&=[c_2+1, a_2]\times [b_1, d_1-1]		&R_4&=[c_2+1, a_2]\times [d_1, d_2]\\
R_5&=[c_2+1, a_2] \times [d_2+1, b_2]		&R_6&=[c_1, c_2]\times [d_2+1, b_2]\\
R_7&=[a_1, c_1-1] \times [d_2 +1, b_2]		&R_8&=[a_1, c_1-1]\times [d_1, d_2].
\end{align*}

%\begin{figure}[h]
%\begin{center}
%\begin{tikzpicture}[scale=1.5]
%	 \draw (0,0) 	rectangle (1,1) 
%	 				rectangle (2,0)
%					rectangle (3,1)
%					rectangle (2,2);
%	\draw[fill=black!30] (2,2)  rectangle (1,1);
%	\draw (1,1)		rectangle (0,2)
%					rectangle (1,3)
%					rectangle (2,2)
%					rectangle (3,3) ; 
%	 \draw 	(0.5,0.5) node{$R_1$}
%	 		(1.5,0.5) node{$R_2$}
%			(2.5, 0.5) node{$R_3$}
%			(2.5, 1.5) node{$R_4$}
%			(1.5, 1.5) node{$R$}
%			(0.5, 1.5) node{$R_8$}
%			(0.5, 2.5) node{$R_7$}
%			(1.5, 2.5) node{$R_6$}
%			(2.5, 2.5) node{$R_5$};
%\end{tikzpicture}
%\end{center}
%	\caption{Decomposition of the rectangle $R'$}
%	\label{F:subrect}
%\end{figure}

Let $X \sim \operatorname{Bin}(R', p)$.  If the event $D(R, R')$ occurs, then each of the rectangles $R_3 \cup R_4 \cup R_5$ and $R_7 \cup R_8 \cup R_1$ are horizontally traversable and each of the rectangles $R_5 \cup R_6 \cup R_7$ and $R_1 \cup R_2 \cup R_3$ are vertically traversable. The probability of each of these events can be individually approximated by Lemma \ref{L:cross-strips}, but these events are not independent.  Conditioning on the infected sites in the corner rectangles, $R_1, R_3, R_5$, and $R_7$, it is possible to approximate the probability of these events by slightly different events that are independent of each other.  

Set $Y=X^+ \cap (R_1 \cup R_3 \cup R_5 \cup R_7)$.  Since $|R_1 \cup R_3 \cup R_5 \cup R_7| =st$, then $\expect |Y| \leq st(8p^2 + 100p^3) \leq 9 st p^2$ for $p \leq 1/100$.  

The events that two particular sites are contained in $X^+$ are not independent, however, if $d(\mathbf{x}, \mathbf{y}) \geq 7$, then the events $\{\mathbf{x} \in X^+\}$ and $\{\mathbf{y} \in X^+\}$ are independent since they each depend on the initial infection of disjoint sets of sites.  

The grid, $\mathbb{Z}^2$, can be decomposed into $25$ disjoint sets $C_1, \ldots, C_{25}$ such that for each $i \in [1,25]$ and $\mathbf{x}, \mathbf{y} \in C_i$, $d(\mathbf{x}, \mathbf{y}) \geq 7$.   Indeed, set $B=B_5(\mathbf{0})$ and for each $\mathbf{b} \in B$, define $C_{\mathbf{b}} = \{\mathbf{b} + x(4,3)+y(3,-4):\ x, y \in \mathbb{Z}\}$.  These sets $\{C_{\mathbf{b}}:\ \mathbf{b} \in B\}$ are disjoint, $|B| = 25$ and for any $\mathbf{x}, \mathbf{y} \in C_{\mathbf{b}}$, if $\mathbf{x} \neq \mathbf{y}$, then $d(\mathbf{x}, \mathbf{y}) \geq 7$ and hence the events $\{\mathbf{x} \in X^+\}$ and $\{\mathbf{y} \in X^+\}$ are independent.

Now, if $|Y| \geq cs$, then for some $\mathbf{b} \in B$, the expected number of sites in $C_{\mathbf{b}} \cap Y$ satisfies $|C_{\mathbf{b}} \cap Y| \geq cs/25$.  For each $\mathbf{b} \in B$, $\expect(|C_{\mathbf{b}} \cap Y|) \leq 9p^2 st/25$ and thus by inequality \eqref{E:binomtail}, for $T \leq c/9$,
\begin{align*}
\prob \left(|C_{\mathbf{b}} \cap Y| \geq \frac{cs}{25}\right)
	&\leq \left(\frac{9p^2st/25}{cs/25} \right)^{cs/25}\\
	&=\left( \frac{9p^2 t}{c}\right)^{cs/25}\\
	&\leq \left( \frac{9T}{c}\right)^{cs/25}.
\end{align*}
Thus, 
\[
\prob(|Y| \geq cs) \leq 25\left(\frac{9T}{c} \right)^{cs/25}.
\]
 Choose $T = T(c,Z) \leq \frac{c}{9}\left(\frac{1}{25}e^{-2(1-6c)g(Z))} \right)^{25/c}$.  Since $g$ is a decreasing function,  $g(mq^2), g(nq^2) \leq g(Z)$ and hence since $s \leq t$, 
\begin{align*}
25 \left( \frac{9T}{c}\right)^{cs/25} 
	&\leq e^{-2s(1-6c)g(Z)}\\
	& \leq e^{-(1-6c)(sg(nq^2) + tg(mq^2))}\\
	& \leq e^{-(1-6c)sg(nq^2)}\leq 1.
\end{align*}

Similarly, for $s \leq T/p^2 \leq c/(9p^2)$,
\[
\prob(|Y| \geq ct) \leq  25 \left( \frac{9T}{c}\right)^{ct/25} \leq e^{-2t(1-6 c)g(Z)}.
\]

Consider the probability of the event $D(R, R')$ conditioning on $|Y| \leq cs$.  If every column of $R'$ that contained sites of $X^+ \cap (R_1 \cup R_3 \cup R_5 \cup R_7)$ were removed, the rectangles $R_4$ and $R_8$ would be split into as most $cs+2$ sub-rectangles of height $n$ and total width at least $s-cs$.

If $D(R,R')$ occurs, then in particular, each of these sub-rectangles is horizontally traversable by the sites in $X^+$.  However,  the membership of sites in $X^+$ might depend on initially infected sites in the deleted columns or adjacent rectangles.  In order to obtain a set of rectangles for which the events that each are horizontally traversable are independent, two further columns on either side of each sub-rectangle are removed.  Since this might also depend on sites in $Y$, delete $2$ further rows from the top and bottom of each sub-rectangle to ensure that the events are independent of the sites in $Y$.  Let the sub-rectangles be of widths $s_1, s_2, \ldots, s_j$ and note that $\sum_{i=1}^j s_i \geq s-cs-4(cs+2) = s(1-5c)-8$.  Set $w =\lceil 11/c \rceil$, let $Q_1 = Q_1(B,Z,w)$ and apply Lemma \ref{L:cross-strips} using $w$ for the widths of the strips.  Then by the choice of $w$, and since $g$ is decreasing,
\begin{align*}
\prob(R_4 &\text{ and } R_8 \text{ are horiz. trav.}\mid |Y|\leq cs)\\
	&\leq \prod_{i=1}^j(1+pQ_1)^{s_i/w+1}e^{-g((n-4)q^2)s_i(1-11/w)}\\
	&\leq (1+pQ_1)^{s/w + cs+2}e^{-(s(1-5c)-8)(1-11/w)g(nq^2)}\\
	&\leq (1+pQ_1)^{s/w+cs+2}e^{-g(nq^2)(s(1-6c)-8)}\\
	&\leq (1+pQ_1)^{\frac{12}{11}sc+2} e^{8g(Z) - g(nq^2)s(1-6c))}.
\end{align*}

Similarly, conditioning on the event that  $|Y| \leq ct$,
\[
\prob(R_2 \text{ and } R_6 \text{ are vert. trav.}\mid |Y|\leq ct) \leq  (1+pQ_1)^{\frac{12}{11}ct+2} e^{8g(Z) - t(1-6c)g(mq^2)}.
\]

Consider the event $D(R, R')$ conditioned on the following three possible ranges for the values of $|Y|$: $|Y| \leq cs$, $cs<|Y| \leq ct$, and $|Y| > ct$.
\begin{align*}
\prob(D(R,R')|&\ |Y| \leq cs)\prob(|Y| \leq cs)\\
	&\leq \prob(D(R,R')\mid |Y| \leq cs)\\
	&\leq (1+pQ_1)^{\frac{12}{11}c(s+t)+4}\\
	&\qquad \exp\left({16g(Z)-(1-6c)(tg(mq^2) + sg(nq^2))}\right),\\
\prob(D(R,R')|&\ cs < |Y| \leq ct)\prob(cs < |Y| \leq ct)\\
				&\leq \prob(D(R,R')\mid cs < |Y| \leq ct)\prob(cs < |Y|)\\
					&\leq (1+pQ_1)^{\frac{12}{11}ct+2} \exp\left({8g(Z) - t(1-6c)g(mq^2)}\right) e^{-(1-6c)sg(nq^2)}\\
				&\leq (1+pQ_1)^{\frac{12}{11}c(s+t)+4}\\
				&\qquad \exp\left({16g(Z) - (1-6c)(tg(mq^2) + sg(nq^2))}\right), \text{ and}\\
\prob(D(R,R')|&\ |Y| > ct) \prob(|Y|>ct)\\
		&\leq \prob(|Y|>ct)\\
		&\leq e^{-(1-6c)(sg(nq^2) + tg(mq^2))}\\
		&\leq (1+pQ_1)^{\frac{12}{11}c(s+t)+4}\\
		&\qquad \exp\left({16g(Z)-(1-6c)(sg(nq^2) + tg(mq^2))}\right).
\end{align*}
Combining these yields,
\begin{multline*}
\prob(D(R,R')) \leq 3(1+pQ_1)^{\frac{12}{11}c(s+t)+4}\\ \exp\left({16g(Z)-(1-6c)(sg(nq^2) + tg(mq^2))}\right),
\end{multline*}
the desired upper bound for the probability that the infection grows from the rectangle $R$ to the rectangle $R'$.
\end{proof}

\subsection{Hierarchies}

As in the works of Holroyd \cite{aH03} and also of Balogh and Bollob\'{a}s \cite{BB06}, the notion of a `hierarchy' is used to account for the different ways in which small internally spanned rectangles can either join together or grow into larger rectangles through the update process.  The definitions and results in this section are similar to the notion of hierarchies in \cite{aH03}, though on a different scale with respect to the parameter $p$ and with changes throughout to account for the behaviour of sites that become uninfected, but allow infection to spread between two larger infected rectangles.

\begin{definition}
A \emph{hierarchy for a rectangle $R$}, is a pair $\mathcal{H} = (G_{\mathcal{H}}, \{R_u\}_{u \in V(G_{\mathcal{H}})})$, where $G_{\mathcal{H}}$ is a finite directed rooted tree  with all edges directed away from the root and with maximum out-degree $3$, together with a collection of rectangles $\{R_u\}_{u \in V(G_{\mathcal{H}})}$ such that 
\begin{itemize}
\item if $r$ is the root of $G_{\mathcal{H}}$, then $R_r = R$,
\item if $u \to v$ in $G_{\mathcal{H}}$, then $R_u \supseteq R_v$,
\item if $u$ has three children, then at least one child has as its corresponding rectangle a single site,
\item if $u$ has two or three children and at least one child $v$ has $\text{short}(R_v) >2$, then $R_u$ is internally spanned by the rectangles corresponding to its children.
\end{itemize}
Vertices with out-degree $0$ are called \emph{seeds}, vertices with out-degree $1$ are called \emph{normal} and vertices with out-degree $2$ or $3$ are called \emph{splitters}. 
\end{definition} 

As in the analysis of usual bootstrap percolation, hierarchies are thought of as constructed `bottom up' using initially infected sites: two rectangles are joined to create a `parent' when their sites span a single larger rectangle.  There is a slight modification to deal with the case when one of these rectangles is a single site.  In this case, in order to remain consistent with the definition of $X^+$, a single site is only joined to another rectangle if the site is part of a triplet among the initially infected sites.  In this case, the rectangles joined will be those that correspond to sites in the triplet.

\begin{proposition}\label{P:finetree}
Let $R$ be a rectangle, $X \subseteq R$ and set $X^* = X \cap X^+$.  Suppose $R$ is internally spanned by $X$.  Then, there exists a hierarchy $\mathcal{H} = (G, \{R_u\}_{u\in V(G)})$ for $R$ and $\{X_u\}_{u \in V(G)}$ with $X_u \subseteq X^* \cap R_u$ such that
\begin{itemize}
	\item the root $r \in V(G)$ has $R_r = R$,
 	\item the rectangles corresponding to the seeds of $\mathcal{H}$ are all the individual sites in $X^*$, 
	\item every vertex that is not a seed has out degree at least $2$,
	\item if $u$ and $w$ are both children of a vertex $v$, then $X_u \cap X_w = \emptyset$. 
%	\item if $v$ is not a seed and has at least one child $u$ with $\text{short}(R_u) >2$, then $R_v$ is internally spanned by the rectangles corresponding to its children.
\end{itemize}
\end{proposition}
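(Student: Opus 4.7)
My plan is to prove this by strong induction on $\phi(R)$. The key preliminary observation is that sites in $X \setminus X^+$ are isolated in $X$ and vanish after one step of $\mathcal{R}$ without contributing to any further infection; therefore, replacing $X$ by $X^* = X \cap X^+$ on the interior of $R$ preserves internal spanning, provided that bridge sites in $X^+ \setminus X$ are separately tracked through the triplet structures in $X$ that produce them. In the base case $\phi(R) = 2$, the rectangle $R$ is a single site that must lie in $X^*$, and the hierarchy consists of a single vertex that is simultaneously the root and a seed.

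For the inductive step I would establish a splitting dichotomy: given an internally spanned $R$ with $\phi(R) \geq 3$, either (i) there are two sub-rectangles $R_1, R_2 \subsetneq R$ whose spans together yield $R$, each internally spanned by a disjoint subset of $X^* \cap R$, or (ii) there is a triplet in $X$ whose sites, with one playing the role of a single-site bridge $\{\mathbf{y}\}$, combine with two other sub-rectangles to span $R$. The new root vertex is given out-degree two in case (i) and three in case (ii), with children $R_1, R_2$ (and possibly $\{\mathbf{y}\}$). Applying the inductive hypothesis to each non-singleton child produces sub-hierarchies that attach to the root, and the disjointness of the $X_u$ for siblings follows from the disjointness of the $X^*$-subsets used in the split. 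To prove the dichotomy I would inspect the final merging step in the $\mathcal{R}$-process on $X \cap R$ and trace which sub-components fused; to circumvent the non-monotonicity of $\mathcal{R}$, I would carry out the tracking in the monotone companion process $\mathcal{B}$ applied to $X^+$, where a standard Aizenman--Lebowitz type decomposition applies, and then translate back to $X^*$ using the structural rules defining $X^+$.

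The main obstacle will be verifying that a $\mathcal{B}$-decomposition of $R$ by $X^+$ genuinely produces sub-rectangles that are $\mathcal{R}$-internally spanned by the corresponding disjoint subsets of $X$: a sub-rectangle that is $\mathcal{B}$-spanned by $X^+$ need not be $\mathcal{R}$-spanned by $X$ alone, because boundary recovery events could disrupt the infection in the smaller rectangle even when they do not disrupt it in $R$. Handling this will require either choosing sub-rectangles with a small buffer from their shared boundary, or a strengthened inductive hypothesis that records the role of bridge sites in $X^+ \setminus X$ adjacent to each sub-rectangle; this bookkeeping, together with verification of the fine-hierarchy conditions on out-degree, seed assignment, and sibling disjointness, will constitute the bulk of the technical work.
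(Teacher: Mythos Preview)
Your plan has the right ingredients but is more tangled than necessary, and the obstacle you flag at the end is largely self-inflicted. The paper takes a different and simpler route: rather than top-down induction on $\phi(R)$, it first observes (as you do) that sites in $X \setminus X^*$ recover without ever contributing to another infection, so $R$ is internally spanned by $X$ if and only if it is internally spanned by $X^*$. It then builds the hierarchy \emph{bottom-up}. The seeds are the individual sites of $X^*$, and one repeatedly picks two parentless rectangles $R_u, R_v$ at distance at most $2$ and adds a parent whose rectangle is the smallest one containing $R_u \cup R_v$, with $X$-set the disjoint union of the children's. When one of $R_u, R_v$ is a single site $\{\mathbf{x}\}$, membership of $\mathbf{x}$ in $X^*$ forces it to belong to a double or triplet in $X$, and the merge instead joins $\{\mathbf{x}\}$ with the one or two other parentless rectangles containing the remaining sites of that structure; this is precisely where out-degree three arises. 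Because $X^*$ spans $R$, the merging terminates at a single root with $R_r = R$. At no point does one need to assert that an intermediate rectangle is $\mathcal{R}$-internally spanned by a subset of $X$.

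Your obstacle comes from carrying ``$R_i$ is $\mathcal{R}$-internally spanned by $X \cap R_i$'' as the inductive hypothesis, which indeed need not survive an Aizenman--Lebowitz split. But since $\mathcal{R}(Y) \subseteq \mathcal{B}(Y)$ for every $Y$ and $\mathcal{B}$ is monotone, the fact that $X^*$ $\mathcal{R}$-spans $R$ already implies that $X^*$ $\mathcal{B}$-spans $R$; from that point on one may work entirely in $\mathcal{B}$ on subsets of $X^*$, where the standard monotone decomposition goes through with no buffers or strengthened hypotheses. Your separate bookkeeping of bridge sites in $X^+ \setminus X$ is also unnecessary here: the proposition concerns $X^* = X \cap X^+$, which by definition contains none of those sites.
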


\begin{proof}
Note that by the definition of $X^+$, every site in the set $X^*$ is either part of a double or a triplet of sites in $X$ and the only sites in $X \setminus X^*$ are those that do not contribute to the final infection of any other sites before they recover.  Thus, $R$ is internally spanned by $X$ if{f} $R$ is internally spanned by $X^*$.

 The hierarchy $\mathcal{H}$ can be constructed recursively.  Let $R_1^{0}, R_2^{0}, \ldots, R_k^{0}$ be the individual sites in $X^*$ and let these correspond to the seeds of the hierarchy $\mathcal{H}$.  Given a partially constructed hierarchy $\mathcal{H}$, if there exist two vertices $u$ and $v$ with no parent so that $d(R_u, R_v) \leq 2$, add a new vertex to $G_{\mathcal{H}}$ by the following rules:

\textbf{Case 1:} If neither $R_u$ nor $R_v$ is a single site add a new vertex $w$ as the parent of $u$ and $v$ with $R_w$ the smallest rectangle that contains $R_u \cup R_v$ and set $X_w = X_u \cup X_v$.\\

\textbf{Case 2:} If $R_u = \mathbf{x}$ is a single site, then by the choice of $X^*$, the site $\mathbf{x}$ is part of either a double or a triplet.  The sites that form either the double or triplet containing $\mathbf{x}$ might already be a part of another rectangle, but in either case, there is either another rectangle $R_u'$ or two rectangles $R_u'$ and $R_u''$ with no parents that contain the sites associated with the double or triplet containing $\mathbf{x}$.  Add a new vertex $w$ as in the previous case and join either the rectangle and the site or the two rectangles and the site.

This process continues until there are no more sites or rectangles that have yet to be joined.  Since $R$ is internally spanned by $X^*$, this process will stop only when the last remaining vertex with no parent is a root that corresponds to the rectangle $R$.  The resulting directed graph and collection of rectangles have the desired properties, by induction.
\end{proof}

\begin{definition}
Given an initial infection $X$ of $R$, the hierarchy $\mathcal{H}$ is said to \emph{occur} (with respect to $X$) if{f} 
\begin{itemize}
	\item for every seed $u$, if the short side of $R_u$ is the horizontal side then in every $4$ adjacent columns in the the rectangle $R_u$, there are at least $2$ initially infected sites within distance $2$ (similarly for sets of $4$ adjacent rows if the short side of $R_u$ is vertical),
	\item  for every normal vertex $u$ with $u \to v \in E(G_{\mathcal{H}})$, the event $D(R_v, R_u)$ holds,  
\end{itemize}
and these events occur disjointly.

For any rectangle $R_u$ let $J(R_u)$ be the event, as above, that in every $4$ adjacent columns, there are at least $2$ initially infected sites within distance $2$ if the short side of $R_u$ is horizontal and similarly for set of $4$ adjacent rows if the short side of $R_u$ is vertical.
\end{definition}

The condition that these events occur disjointly is included so that by the van den Berg-Kesten inequality (inequality \eqref{E:vdBK-ineq}),
\[
\prob(\mathcal{H} \text{ occurs}) \leq \prod_{w \text{ seed}}\prob(J(R_w))\prod_{\underset{u \to v}{u \text{ normal}}} \prob(D(R_u, R_v)).
\]

Note that, for a rectangle $R$, the event that some hierarchy occurs is not equivalent to the event that the rectangle $R$ is internally spanned.  Rectangles corresponding to seeds might have two initially infected sites within distance two in every set of $4$ adjacent columns without being internally spanned.  However, as long as the rectangles corresponding to seeds are not too large, the difference will be small.  The definition is made in this way because, by Proposition \ref{P:finetree}, if $R$ is internally spanned by $X$, then there is a hierarchy $\mathcal{H}$ for $R$ that occurs.  The number of these hierarchies might be too large compared to the probability that a particular hierarchy occurs to give reasonable estimates on the probability that $R$ is internally spanned.  For this reason,   it is useful to consider the following types of hierarchies where the difference in dimensions between parent and child rectangles are not arbitrarily small.

\begin{definition}
Given $Z>T >0$ and $p>0$, the hierarchy $\mathcal{H}$ is said to be \emph{good} for $Z,T$, and $p$ if the rectangles $\{R_u\}_{u \in V}$ satisfy the following additional conditions on their dimensions:
\begin{itemize}
	\item if $v$ is a seed, then $\text{short}(R_v) < 2Z/p^2$,
	\item if $v$ is not a seed, then $\text{short}(R_v) \geq 2Z/p^2$,
	\item if $u$ is normal with child $v$, then $\phi(R_u) - \phi(R_v) \leq T/p^2$
	\item if $u$ is normal with $u \to v$ and $v$ is also normal, then $\phi(R_u) - \phi(R_v) \geq \frac{T}{2p^2}$, and
	\item if $u$ is a splitter and $v$ is a child of $u$, then $\phi(R_u) - \phi(R_v) \geq \frac{T}{2p^2}$.
\end{itemize}
\end{definition}

Next, it is shown that there exist hierarchies that are both good and occur for  rectangles that are internally spanned.

\begin{proposition}
Let $Z>T>0$, $p>0$ and let $R$ be a rectangle and let $X \subseteq R$.  If $R$ is internally spanned by $X$, then there exists a hierarchy $\mathcal{H}$ that is good for $Z$, $T$ and $p$ and that occurs.
\end{proposition}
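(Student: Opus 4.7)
The plan is to build $\mathcal{H}$ from the fine hierarchy $\mathcal{H}_0$ provided by Proposition \ref{P:finetree} by a top-down recursive pruning, in the spirit of Holroyd \cite{aH03}. Treat the root of $\mathcal{H}_0$ as the root of $\mathcal{H}$. Given a vertex $v$ of $\mathcal{H}$, identified with a vertex $v^{\star}$ of $\mathcal{H}_0$: if $\text{short}(R_{v^{\star}}) < 2Z/p^2$ declare $v$ a seed; otherwise descend in $\mathcal{H}_0$ from $v^{\star}$, at each splitter following the child of largest semi-perimeter, and stop at the first descendant $w^{\star}$ for which $\phi(R_{v^{\star}}) - \phi(R_{w^{\star}}) \geq T/(2p^2)$. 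If $w^{\star}$ is a splitter in $\mathcal{H}_0$ whose sibling drops are all $<T/p^2$, take the children of $w^{\star}$ in $\mathcal{H}_0$ as the children of $v$ in $\mathcal{H}$; otherwise make $v$ normal with unique child $w^{\star}$. Then recurse from the newly created children.

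The delicate point is ensuring that every normal edge has $\phi$-drop at most $T/p^2$, since fine-hierarchy parents can in principle merge two already-large rectangles and so jump in $\phi$ by much more. Here I would invoke an Aizenman--Lebowitz-style lemma adapted to $\mathcal{R}$: if $R$ is internally spanned by $X^{\star}=X\cap X^{+}$ and $T/(2p^2) \leq k \leq \phi(R)$, then $R$ contains an internally spanned sub-rectangle $R'$ with $k \leq \phi(R') \leq 2k$. Such a lemma holds by the same induction on the growth of the span as in the monotone setting, because each elementary merger along $\mathcal{H}_0$ at most doubles the semi-perimeter of the bounding rectangle, so $\phi$ hits every scale up to a factor of $2$; the only novelty is that the elementary pieces at the bottom of the merging process are no longer single sites but doubles and triplets of $X^{\star}$, which affects only the base case. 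Whenever the construction above produces an edge $v \to w$ with $\phi(R_v)-\phi(R_w) > T/p^2$, I insert an extra normal vertex of $\mathcal{H}$ carrying the intermediate rectangle supplied by this lemma, with the interpolation parameter $k$ chosen so that both of the new edges satisfy the required good-hierarchy bounds.

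It remains to verify that the resulting $\mathcal{H}$ is good and occurs. The dimension conditions follow directly from the stopping rules: seeds have short side $<2Z/p^2$, non-seeds have short side $\geq 2Z/p^2$, both normal-to-normal and splitter-to-child drops are $\geq T/(2p^2)$ by the stopping rule, and every normal drop is $\leq T/p^2$ by the interpolation step. For occurrence, a seed rectangle $R_v$ is internally spanned by $X^{\star}\cap R_v$, which consists entirely of doubles and triplets; since the short side is below $2Z/p^2$ the spanning must progress along the long axis, and a short argument (a double gap of four consecutive columns without two infected sites within distance $2$ blocks the $\mathcal{R}$-infection from propagating past it) shows that $J(R_v)$ holds. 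For a normal edge $v \to w$ the event $D(R_w, R_v)$ is immediate from internal spanning of $R_v$ together with $R_w \subseteq R_v$. Finally, disjointness of all these events is inherited from the disjointness in $\mathcal{H}_0$ guaranteed by Proposition \ref{P:finetree}, since the events attached to different vertices of $\mathcal{H}$ are witnessed by disjoint subsets of the $X^{\star}$-sites used at the corresponding levels of $\mathcal{H}_0$. The main obstacle is the Aizenman--Lebowitz-type lemma used in the second paragraph; once it is in hand the rest of the construction is essentially bookkeeping.
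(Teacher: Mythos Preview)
Your overall strategy (top-down recursion on the fine hierarchy of Proposition~\ref{P:finetree}, always following the child of largest semi-perimeter) is exactly what the paper does, but you misidentify the ``delicate point'' and introduce an unnecessary and problematic interpolation step.

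The paper never needs an Aizenman--Lebowitz lemma here. Descending from $R=R_0$ along largest children $R_1,R_2,\ldots$ and stopping at the first $m$ with $\phi(R)-\phi(R_m)\geq T/(2p^2)$, there are just three cases. If the drop lands in $[T/(2p^2),T/p^2]$, make $r$ normal with child $R_m$ and recurse. If the drop overshoots $T/p^2$, the key observation is that by minimality of $m$ one has $\phi(R)-\phi(R_{m-1})<T/(2p^2)$, so the entire overshoot is concentrated in the single fine-hierarchy edge $R_{m-1}\to R_m$. But every non-leaf of the fine hierarchy has out-degree $\geq 2$, so $R_{m-1}$ is already a splitter there; the paper simply records it as a splitter in $\mathcal{H}$ as well (with $r$ normal above it when $m\geq 2$, or with $r$ itself the splitter when $m=1$). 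Splitters carry no upper bound on the drop to their children, only the lower bound $T/(2p^2)$, which holds for $R_m$ and automatically for its sibling $R_m'$ since $\phi(R_m')\leq\phi(R_m)$. So the ``large normal drop'' you worry about never arises: any large drop is absorbed by a splitter, not a normal edge.

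Your interpolation also has a concrete gap. The Aizenman--Lebowitz-type lemma you invoke produces an internally spanned $R'\subseteq R_v$ with $\phi(R')$ in a dyadic window, but gives no guarantee that $R_w\subseteq R'$. Hence you cannot ``insert'' $R'$ between $R_v$ and $R_w$ without violating the nesting condition for hierarchies; you would have to discard $R_w$ entirely and recurse from $R'$ with a fresh fine hierarchy, at which point the disjointness inherited from Proposition~\ref{P:finetree} is no longer available in the form you claim. Moreover, the factor-of-two window $[k,2k]$ is generally much wider than the target window $[\phi(R_v)-T/p^2,\phi(R_v)-T/(2p^2)]$, so even the scale control is not what you need. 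The paper's three-case split sidesteps all of this.

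Finally, your branching rule (``if $w^{\star}$ is a splitter in $\mathcal{H}_0$ whose sibling drops are all $<T/p^2$, take the children of $w^{\star}$\ldots'') is garbled: in $\mathcal{H}_0$ every non-seed is a splitter, and taking the children of $w^{\star}$ as children of $v$ does not correspond to any of the paper's cases. Replace your construction by the clean trichotomy above and the proof goes through without the auxiliary lemma.
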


\begin{proof}
The proof proceeds by induction on $R$.  If $\text{short}(R) < 2Z/p^2$, then take $G_{\mathcal{H}}$ to be a single isolated vertex $r$ and $R_r = R$.  If $R$ is internally spanned, then $\mathcal{H} = (G_{\mathcal{H}}, \{R_r\})$ is a good hierarchy that occurs.

Assume now that $\text{short}(R) \geq 2Z/p^2$, so then $\phi(R) \geq 4Z/p^2$.  Construct a sequence $R \supseteq R_1 \supseteq \ldots$ from Proposition \ref{P:finetree} going down the tree from the root, always talking $R_i$ to be the largest rectangle.  Let $m\geq 1$ be the smallest such that $\phi(R)-\phi(R_m) \geq \frac{T}{2p^2}$ and consider the following three cases.

\noindent \textbf{Case 1}:  If $\frac{T}{2p^2} \leq \phi(R)-\phi(R_m) \leq \frac{T}{p^2}$, then let $\mathcal{H}' = (G', \{R_u\}_{u \in V'})$ be a good hierarchy and denote the root by $r'$, corresponding to the rectangle $R_m$.  Let $r$ be a new vertex and define a new hierarchy rooted at $r$ with $R_r = R$ as follows.  Set $G = (V'\cup\{r\}, E(G') \cup \{r \to r'\})$ and then $\mathcal{H} = (G, \{R_u\}_{u \in V(G)})$ is the desired hierarchy.

\noindent \textbf{Case 2}: If $\phi(R)-\phi(R_m) > T/p^2$ and $m=1$, let $R_1'$ be the other rectangle from the tree in Proposition \ref{P:finetree}.  Note that by construction, $\phi(R_1') \leq \phi(R_1) \leq \phi(R) - \frac{T}{2p^2}$.  Let $\mathcal{H}_1$ and $\mathcal{H}_2$ be good hierarchies that occur for $R_1$ and $R_1'$, respectively.  Construct a good hierarchy for $R$ by adding a new vertex $r$ as the root, with edges joining it to the roots of the trees for $\mathcal{H}_1$ and $\mathcal{H}_1$.

\noindent \textbf{Case 3}: If $\phi(R)-\phi(R_m) > T/p^2$ and $m \geq 2$, let $R_m'$ be the other rectangle contained in $R_{m-1}$ from the tree in Proposition \ref{P:finetree}.  Let $\mathcal{H}_1$ and $\mathcal{H}_2$ be good hierarchies that occur for $R_{m}$ and $R_{m}'$ respectively.  For $i=1,2$, denote the root of $\mathcal{H}_i$ by $r_i$.  Let $r$ and $u$ be two new vertices and set $R_r = R$ and $R_u = R_{m-1}$.  Define a new hierarchy $\mathcal{H}$ with $G_{\mathcal{H}} = G_{\mathcal{H}_1} \cup G_{\mathcal{H}_2} \cup \{r \to u, u\to r_1, u \to r_2\}$, rooted at $r$.   Since $\phi(R) - \phi(R_{m-1}) < \frac{T}{2p^2}$ and $\phi(R) - \phi(R_m) \geq T/p^2$ then
\[
\phi(R_{m-1}) - \phi(R_m') \geq \phi(R_{m-1}) - \phi(R_m) \geq \frac{T}{p^2} - \frac{T}{2p^2} = \frac{T}{2p^2}.
\]
Hence, $\mathcal{H}$ is a good hierarchy for $T, Z$ and $p$.
\end{proof}

Good hierarchies are useful because there are not too many of them for rectangles of certain dimensions.  Fix $B \geq 1$, $p>0$ and let $R$ be a rectangle with $\text{short}(R) \leq \text{long}(R) \leq B/p^2$.  Let $Z,T>0$ and let $\mathcal{H}$ be a hierarchy for $R$ that is good for $Z, T$ and $p$.  By the definition of good hierarchies, for every directed path of length two in $G_{\mathcal{H}}$, $u \to v \to w$, the rectangles $R_u$ and $R_w$ satisfy $\phi(R_u) -\phi(R_w) \geq \frac{T}{2p^2}$.  Thus, the height of the tree $G_{\mathcal{H}}$ is at most
\[
2 \frac{2B/p^2}{T/(2p^2)}+1 = \frac{8B}{T} + 1.
\]
Since the out-degree of each vertex is at most $3$, there are at most $3^{8B/T+2}$ vertices in $G_{\mathcal{H}}$.  Set $M = M(B,T) = 3^{8B/T+2}$.

There are at most $M^{M-1}$ different rooted trees among all those belonging to a good hierarchy for $R$.  Consider now the number of different collections of rectangles corresponding to hierarchies.  In $R$, the number of different rectangles is 
\[
\binom{\text{long}(R)+1}{2}\binom{\text{short}(R)+1}{2} \leq \frac{(B/p^2+1)^4}{4} \leq \left(\frac{B}{p^2}\right)^4.
\]
Thus, for any rooted tree $G$ on at most $M$ vertices, there are at most $(B/p^2)^{4M}$ different collections $\{R_u\}_{u \in V(G)}$ such that for each $u \in V(G)$, $R_u$ is a rectangle contained in $R$.  Therefore, in total, there are at most
\begin{equation}\label{E:hierarchy_num}
M^{M-1}\left(\frac{B}{p^2} \right)^{4M} = M^{M-1}B^{4M}p^{-8M}
\end{equation}
different good hierarchies for the rectangle $R$.  While this number might be very large, it turns out to be small enough compared to the probability that a given hierarchy occurs to give a reasonable upper bound on the probability that the rectangle $R$ is internally spanned.

The following definitions and lemmas can be found in the paper by Holroyd \cite{aH03}.  Although, in that article, the function $g$ is different, the proofs use only the properties that the function $g$ is continuously differentiable, positive, decreasing and convex.  The function $g$, given by equation \eqref{E:betag}, has these properties, by definition and by Fact \ref{Fact:gconvex}.  To emphasize that the functions to come depend on $g$, we shall use $W_g$ and note that for all of these, the $g$ in question is that given in equation \eqref{E:betag}.
\begin{definition}
Let $\mathbf{a} = (a_1, a_2)$ and $\mathbf{b} = (b_1, b_2)$ with for $i=1,2$,  $0 \leq a_i \leq b_i$. Define
\[
W_g(\mathbf{a}, \mathbf{b}) = \inf\left\{\int_{\gamma} g(y)\ dx + g(x) dy\mid \gamma: \mathbf{a} \to \mathbf{b} \text{ piecewise linear path}\right\}.
\]
\end{definition}

The function $W$ and its properties are used to bound the term $\exp(tg(mq^2)+sg(nq^2))$ arising in Lemma \ref{L:D(R,R')}. The following, Lemmas \ref{L:Wtriangleineq}, \ref{L:oneline}, \ref{L:Wint}, \ref{L:Wpod}, are from Holroyd \cite{aH03} (Propositions 12, 13, 14, and 15).

\begin{lemma}\label{L:Wtriangleineq}
Let $\mathbf{a}, \mathbf{b}, \mathbf{c} \in (\mathbb{R}^+)^2$ with $\mathbf{a} \leq \mathbf{b} \leq \mathbf{c}$.  Then 
\[
W_g(\mathbf{a}, \mathbf{b}) + W_g(\mathbf{b}, \mathbf{c}) \geq W_g(\mathbf{a}, \mathbf{c}).
\]
\end{lemma}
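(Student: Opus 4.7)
\medskip

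\noindent\textbf{Proof plan for Lemma \ref{L:Wtriangleineq}.}
The plan is to prove this triangle inequality directly from the definition of $W_g$ as an infimum of a line integral, by concatenating near-optimal paths from $\mathbf{a}$ to $\mathbf{b}$ and from $\mathbf{b}$ to $\mathbf{c}$. No special property of $g$ is needed; only that line integrals are additive under concatenation and that the concatenation of two piecewise linear paths is piecewise linear.

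First, I would dispose of the degenerate case. If either $W_g(\mathbf{a}, \mathbf{b}) = \infty$ or $W_g(\mathbf{b}, \mathbf{c}) = \infty$, the inequality is immediate, so I may assume both quantities are finite. Fix $\varepsilon > 0$. By the definition of infimum, choose piecewise linear paths $\gamma_1 : \mathbf{a} \to \mathbf{b}$ and $\gamma_2 : \mathbf{b} \to \mathbf{c}$ such that
\[
\int_{\gamma_1} \bigl(g(y)\, dx + g(x)\, dy\bigr) \leq W_g(\mathbf{a}, \mathbf{b}) + \tfrac{\varepsilon}{2}, \qquad \int_{\gamma_2} \bigl(g(y)\, dx + g(x)\, dy\bigr) \leq W_g(\mathbf{b}, \mathbf{c}) + \tfrac{\varepsilon}{2}.
\]

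Next, I would form the concatenation $\gamma = \gamma_1 \ast \gamma_2$, which is itself a piecewise linear path from $\mathbf{a}$ to $\mathbf{c}$ (the endpoint of $\gamma_1$ and starting point of $\gamma_2$ both equal $\mathbf{b}$, so the concatenation is continuous; and piecewise linearity is preserved). Because line integrals are additive under concatenation,
\[
\int_{\gamma} \bigl(g(y)\, dx + g(x)\, dy\bigr) = \int_{\gamma_1} \bigl(g(y)\, dx + g(x)\, dy\bigr) + \int_{\gamma_2} \bigl(g(y)\, dx + g(x)\, dy\bigr).
\]
Since $\gamma$ is admissible in the infimum defining $W_g(\mathbf{a}, \mathbf{c})$, the previous display gives
\[
W_g(\mathbf{a}, \mathbf{c}) \leq W_g(\mathbf{a}, \mathbf{b}) + W_g(\mathbf{b}, \mathbf{c}) + \varepsilon.
\]
Letting $\varepsilon \to 0$ yields the claim.

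There is essentially no obstacle: the only mild point to verify is that $g$ is integrable along the chosen paths, which is ensured by the assumption that $W_g(\mathbf{a}, \mathbf{b})$ and $W_g(\mathbf{b}, \mathbf{c})$ are finite (so that near-optimal paths with finite cost actually exist). The hypothesis $\mathbf{a} \leq \mathbf{b} \leq \mathbf{c}$ is not used in the argument itself, but it is what makes the inequality interesting in the present context, where $W_g$ will be applied to ordered pairs of rectangle-dimension vectors.
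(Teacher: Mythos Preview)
Your argument is correct and is exactly the standard concatenation proof. The paper does not give its own proof of this lemma but cites Holroyd \cite{aH03} (Proposition~12), where the same $\varepsilon$-approximation-and-concatenation argument is used, so your approach coincides with the intended one.
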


\begin{lemma}\label{L:oneline}
If $\mathbf{a} \leq \mathbf{b}$, then $W_g(\mathbf{a}, \mathbf{b}) \leq (b_1-a_1)g(a_2) + (b_2-a_2)g(a_1)$.
\end{lemma}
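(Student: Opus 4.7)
Since $W_g(\mathbf{a},\mathbf{b})$ is defined as an infimum over piecewise linear paths from $\mathbf{a}$ to $\mathbf{b}$, the plan is simply to exhibit a convenient path and evaluate the line integral along it; the claimed inequality is then obtained by applying the monotonicity of $g$ stated in Fact \ref{Fact:gconvex}.

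Concretely, I would take the L-shaped path $\gamma$ consisting of the horizontal segment from $(a_1,a_2)$ to $(b_1,a_2)$ followed by the vertical segment from $(b_1,a_2)$ to $(b_1,b_2)$. On the first segment, $y\equiv a_2$ so $dy=0$ and $x$ varies from $a_1$ to $b_1$, contributing
\[
\int g(y)\,dx = g(a_2)(b_1-a_1).
\]
On the second segment, $x\equiv b_1$ so $dx=0$ and $y$ varies from $a_2$ to $b_2$, contributing $g(b_1)(b_2-a_2)$. Summing gives
\[
\int_{\gamma} g(y)\,dx + g(x)\,dy = (b_1-a_1)g(a_2) + (b_2-a_2)g(b_1).
\]

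Finally, since $g$ is decreasing on $(0,\infty)$ and $a_1\leq b_1$, we have $g(b_1)\leq g(a_1)$, so
\[
W_g(\mathbf{a},\mathbf{b}) \leq (b_1-a_1)g(a_2) + (b_2-a_2)g(b_1) \leq (b_1-a_1)g(a_2) + (b_2-a_2)g(a_1),
\]
as desired. There is no real obstacle here; the only points to check are that the L-shaped path is admissible (it is piecewise linear with nonnegative increments in each coordinate, since $\mathbf{a}\leq\mathbf{b}$) and that $g$ is decreasing on the relevant domain, both of which are immediate.
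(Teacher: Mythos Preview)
Your proof is correct. The paper does not actually prove this lemma itself; it simply cites Holroyd \cite{aH03} (where it appears as Proposition~13), and the argument there is precisely the one you give: take the L-shaped path through $(b_1,a_2)$, compute the integral directly, and use that $g$ is decreasing to replace $g(b_1)$ by $g(a_1)$.
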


\begin{lemma}\label{L:Wint}
If $\mathbf{a} = (a_1, a_2)$ with $a_1+a_2 = A$ and $\mathbf{b} = (B,B)$ with $A \leq B$, then
\[
W_g(\mathbf{a}, \mathbf{b}) \geq 2 \int_A^B g(x)\ dx.
\]
\end{lemma}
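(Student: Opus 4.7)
The plan is to reduce the variational problem to an explicit computation along a simple extremal path, exploiting the convexity and monotonicity of $g$. First, observe that the infimum defining $W_g(\mathbf{a},\mathbf{b})$ is attained on monotone paths, i.e.\ those with $dx, dy \geq 0$: since $g \geq 0$, any backtracking excursion only adds non-negative mass to $\int_\gamma g(y)\,dx + g(x)\,dy$, so excising such excursions does not increase the integral. Among monotone paths, the Euler--Lagrange equation for the integrand $g(y)\,x' + g(x)\,y'$ forces $x'(g'(x)-g'(y)) = 0$ and $y'(g'(x)-g'(y)) = 0$ along smooth extremal pieces, so each piece must be horizontal, vertical, or lie along the diagonal $\{x = y\}$.

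The crucial step is a corner-cutting exchange. Among such piecewise monotone paths from $\mathbf{a} = (a_1,a_2)$ (WLOG $a_1 \leq a_2$) to $(B,B)$, the minimizer is the ``L-shape then diagonal'' path
\[
(a_1,a_2) \longrightarrow (a_2,a_2) \longrightarrow (B,B).
\]
Indeed, if an extremal path begins instead with a diagonal piece of length $t$ followed by a horizontal piece of length $a_2 - a_1$, swapping to horizontal-then-diagonal changes the integral by
\[
\int_0^t \bigl[g(a_1 + s) - g(a_2 + s)\bigr]\,ds \;-\; (a_2-a_1)\bigl[g(a_2) - g(a_2+t)\bigr],
\]
which is non-negative by the convexity of $g$: pointwise, convexity yields $g(a_1+s) - g(a_2+s) \geq -g'(a_2+s)(a_2-a_1)$, and integrating over $s \in [0,t]$ gives exactly $(a_2-a_1)[g(a_2) - g(a_2+t)]$. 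Iterating this exchange pushes all horizontal motion to the start, leaving the L-shape.

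Evaluating on the L-shape gives
\[
\int_\gamma g(y)\,dx + g(x)\,dy \;=\; g(a_2)(a_2 - a_1) + 2\int_{a_2}^B g(u)\,du.
\]
Since $g \geq 0$ and $a_2 = A - a_1 \leq A$, the interval $[a_2,B]$ contains $[A,B]$, and hence $2\int_{a_2}^B g \geq 2\int_A^B g$, which combined with the non-negative boundary term yields the claimed inequality $W_g(\mathbf{a},\mathbf{b}) \geq 2\int_A^B g(u)\,du$.

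The main technical obstacle is promoting the single-swap exchange argument to arbitrary extremal configurations with many segments; this is handled by induction on the number of non-diagonal pieces, each inductive step being justified by the convexity inequality above. The details follow Holroyd's treatment of analogous integrals in~\cite{aH03}.
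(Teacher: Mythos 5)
The paper does not prove this lemma: it states explicitly that Lemmas~\ref{L:Wtriangleineq}--\ref{L:Wpod} are quoted verbatim from Holroyd~\cite{aH03} (Propositions 12--15), so there is no in-paper argument to compare against.

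Your sketch has the right idea, and the two concrete computations in it are correct: the exchange inequality does hold by integrating the tangent-line bound $g(a_1+s)-g(a_2+s)\geq -g'(a_2+s)(a_2-a_1)$ over $s\in[0,t]$, and the L-shape value $g(a_2)(a_2-a_1)+2\int_{a_2}^{B}g$ is indeed at least $2\int_A^B g$ because $a_2\leq A$ and $g\geq 0$. But as written the logic has a gap. The statement needs a \emph{lower} bound on an infimum, yet your argument only establishes that the L-shape path dominates one specific competitor (an initial diagonal swapped with the following horizontal segment). To conclude $W_g(\mathbf{a},\mathbf{b})\geq 2\int_A^B g$ you must show that \emph{every} admissible path has integral at least the L-shape value; a single exchange, plus an appeal to Euler--Lagrange (which assumes the infimum is attained and the extremal is smooth enough), does not deliver that. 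The ``induction on non-diagonal pieces'' you wave at would also have to cope with vertical segments and with paths that cross the diagonal $\{x=y\}$, neither of which is addressed.

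A clean way to make your corner-cutting idea airtight --- and to avoid any discussion of attainment or of the structure of optimal paths --- is to recast it as a potential-function bound. Set
\[
\Psi(x,y) \;=\; |x-y|\,g\bigl(\max(x,y)\bigr) \;+\; 2\int_{\max(x,y)}^{B} g(u)\,du .
\]
For a monotone direction $(\dot x,\dot y)\geq 0$ at a point with, say, $x<y$, one has $-\tfrac{d}{dt}\Psi = g(y)\dot x + \bigl(g(y)-(y-x)g'(y)\bigr)\dot y$, and the convexity inequality you already invoked, $g(x)\geq g(y)+g'(y)(x-y)$, gives $g(y)\dot x + g(x)\dot y \geq -\tfrac{d}{dt}\Psi$ pointwise (and likewise in the region $x>y$, with equality on the diagonal). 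Integrating along any monotone piecewise-linear path from $\mathbf{a}$ to $(B,B)$ yields
\[
\int_\gamma g(y)\,dx + g(x)\,dy \;\geq\; \Psi(\mathbf{a}) - \Psi(B,B) \;=\; \Psi(\mathbf{a}) \;\geq\; 2\int_A^B g(u)\,du,
\]
which is the claim. This is the same convexity mechanism you identified, but it proves the lower bound directly for every path rather than first having to identify and verify a global minimizer.
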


\begin{lemma}\label{L:Wpod}
For every $z, Z$ with $0 < z \leq Z$ and $\mathbf{a}, \mathbf{b}, \mathbf{c}, \mathbf{d}, \mathbf{r} \in (\mathbb{R}^+)^2$ with $\mathbf{a} \leq \mathbf{b}$, $\mathbf{c} \leq \mathbf{d}$, $\mathbf{r} \geq \mathbf{b}, \mathbf{d}, (2Z, 2Z)$ and $\mathbf{r} \leq \mathbf{b} + \mathbf{d} + (a,a)$, there exists $\mathbf{s} \in (\mathbb{R}^+)^2$ with $\mathbf{s} \leq \mathbf{r}$ and $\mathbf{s} \leq \mathbf{a} + \mathbf{c}$ such that
\[
W_g(\mathbf{a}, \mathbf{b}) + W_g(\mathbf{c}, \mathbf{d}) \geq W_g(\mathbf{s}, \mathbf{r}) - 2zg(Z).
\]
\end{lemma}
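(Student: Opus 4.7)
The strategy is to construct an explicit near-optimal monotone path from $\mathbf{s}$ to $\mathbf{r}$ by taking the pointwise sum of near-optimal monotone paths for $W_g(\mathbf{a},\mathbf{b})$ and $W_g(\mathbf{c},\mathbf{d})$ and correcting by a short segment at the endpoint. Fix $\varepsilon>0$ and take $\varepsilon$-optimal piecewise linear paths $\gamma_1:\mathbf{a}\to\mathbf{b}$ and $\gamma_2:\mathbf{c}\to\mathbf{d}$, each parametrised by $t\in[0,1]$ with both coordinate functions non-decreasing (a standard reduction, since non-monotonicities can be ironed out without increasing cost as each of $g(y)\,dx$ and $g(x)\,dy$ is non-negative along forward motion). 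Form the sum path $\sigma(t) := \gamma_1(t)+\gamma_2(t)$, a monotone path from $\mathbf{a}+\mathbf{c}$ to $\mathbf{b}+\mathbf{d}$.

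The first key step is the pointwise differential inequality
\[
g(\sigma_y)\,d\sigma_x + g(\sigma_x)\,d\sigma_y \;\leq\; \sum_{i=1,2}\bigl(g(\gamma_{i,y})\,d\gamma_{i,x} + g(\gamma_{i,x})\,d\gamma_{i,y}\bigr),
\]
which holds because $g$ is decreasing and all four differentials $d\gamma_{i,x},d\gamma_{i,y}$ are non-negative (so, e.g., $g(\gamma_{1,y}+\gamma_{2,y})\leq \min\{g(\gamma_{1,y}),g(\gamma_{2,y})\}$). Integrating over $[0,1]$ gives the base ``sum'' estimate
\[
W_g(\mathbf{a}+\mathbf{c},\,\mathbf{b}+\mathbf{d}) \;\leq\; W_g(\mathbf{a},\mathbf{b}) + W_g(\mathbf{c},\mathbf{d}) + 2\varepsilon.
\]

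Next, set $\mathbf{s} := \bigl(\min(a_1+c_1,r_1),\min(a_2+c_2,r_2)\bigr)$, so trivially $\mathbf{s}\leq\mathbf{a}+\mathbf{c}$ and $\mathbf{s}\leq\mathbf{r}$. Build a monotone path from $\mathbf{s}$ to $\mathbf{r}$ by entering $\sigma$ at its earliest point dominating $\mathbf{s}$ (or starting directly at $\mathbf{a}+\mathbf{c}$ when $\mathbf{s}=\mathbf{a}+\mathbf{c}$), following $\sigma$ as long as it remains coordinate-wise dominated by $\mathbf{r}$, and then correcting by a coordinate-axis-aligned straight-line segment to $\mathbf{r}$. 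The hypothesis $\mathbf{r}\leq\mathbf{b}+\mathbf{d}+(z,z)$ forces any such correction to have horizontal and vertical lengths each at most $z$, while $\mathbf{r}\geq(2Z,2Z)$ together with $z\leq Z$ yields $b_i+d_i\geq 2Z-z\geq Z$; hence the correction segment lies entirely in the region $\{(x,y):x\geq Z,\,y\geq Z\}$. Lemma~\ref{L:oneline} then bounds its cost by $z\,g(Z)+z\,g(Z)=2z\,g(Z)$. Combining with the base estimate and letting $\varepsilon\to 0$ yields the claimed inequality.

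The principal technical obstacle is the endpoint case analysis: independently in each coordinate, $\mathbf{r}$ may be larger or smaller than $\mathbf{b}+\mathbf{d}$, and $\mathbf{a}+\mathbf{c}$ may or may not be dominated by $\mathbf{r}$, producing four geometric configurations for the correcting segment (pure extension, pure truncation, or one of each). For each configuration one must verify that the spliced path stays monotone, that $\mathbf{s}$ still satisfies both inequalities, and that the correction length in each coordinate is genuinely at most $z$. The decreasing monotonicity of $g$ is what allows the correction cost bound $2zg(Z)$ to be applied uniformly across all cases.
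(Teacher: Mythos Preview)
Your core idea---forming the sum path $\sigma=\gamma_1+\gamma_2$ and using the pointwise inequality $g(\sigma_y)\,d\sigma_x+g(\sigma_x)\,d\sigma_y\le\sum_i\bigl(g(\gamma_{i,y})\,d\gamma_{i,x}+g(\gamma_{i,x})\,d\gamma_{i,y}\bigr)$ from the monotonicity of $g$---is exactly the mechanism in Holroyd's proof (to which the paper defers), and your base estimate $W_g(\mathbf{a}+\mathbf{c},\mathbf{b}+\mathbf{d})\le W_g(\mathbf{a},\mathbf{b})+W_g(\mathbf{c},\mathbf{d})$ is correct.

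There is, however, a genuine gap in the endpoint handling. Your assertion that ``the hypothesis $\mathbf{r}\le\mathbf{b}+\mathbf{d}+(z,z)$ forces any such correction to have horizontal and vertical lengths each at most $z$'' is false in the truncation case you yourself flag. If $\sigma$ first leaves the box $[0,r_1]\times[0,r_2]$ through the side $\{x=r_1\}$, say at $(r_1,\sigma_2(t^*))$, then your correcting segment runs from $(r_1,\sigma_2(t^*))$ to $(r_1,r_2)$ and has vertical length $r_2-\sigma_2(t^*)$. The hypothesis only gives $r_2\le b_2+d_2+z$, while $\sigma_2(t^*)$ can be as small as $a_2+c_2$; there is no reason this difference is at most $z$, nor that $\sigma_2(t^*)\ge Z$, so neither the length bound nor the cost bound $2zg(Z)$ is justified. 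The same problem arises at the start when $(\mathbf{a}+\mathbf{c})_i>r_i$: your path ``enters $\sigma$ at $\mathbf{a}+\mathbf{c}$'', which already lies outside $[0,r_i]$ in coordinate $i$, so no monotone splice to $\mathbf{r}$ is possible along $\sigma$.

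The missing ingredient is the hypothesis $\mathbf{r}\ge\mathbf{b},\mathbf{d}$, which you never use. The fix in the exit case above is not to stop at $t^*$ but to continue with the projected path $(r_1,\sigma_2(t))$ up to $(r_1,b_2+d_2)$ and only then correct (now with vertical length $r_2-(b_2+d_2)\le z$, and in the region $\{x\ge Z\}$). The extra continuation cost is
\[
g(r_1)\bigl((b_2-\gamma_{1,2}(t^*))+(d_2-\gamma_{2,2}(t^*))\bigr),
\]
and since $r_1\ge b_1,d_1$ this is at most $g(b_1)(b_2-\gamma_{1,2}(t^*))+g(d_1)(d_2-\gamma_{2,2}(t^*))$, which is dominated by the tails of $\gamma_1,\gamma_2$ on $[t^*,1]$ that you discarded from the sum-path bound. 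A parallel argument (again using $\mathbf{r}\ge\mathbf{b},\mathbf{d}$) handles the case $(\mathbf{a}+\mathbf{c})_i>r_i$. With this repair the argument goes through.
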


\begin{definition}
For rectangles $R \subseteq R'$, set
\[
U(R,R') = W_g(q^2\dim(R), q^2\dim(R')).
\]
\end{definition}

This definition is useful since if $R$ is a rectangle of dimension $(m,n)$ and $R'$ is a rectangle of dimension $(m+s, n+t)$ with $R \subseteq R'$, then by Lemma \ref{L:oneline}, 
\[
\frac{U(R,R')}{q^2} \leq sg(mq^2)+tg(nq^2).
\]

The following lemma, adapted from a corresponding result in \cite{aH03}, shows that every hierarchy is associated with a rectangle called a `pod' that can be used to bound the sum of the values $U(R_v, R_u)$ over all normal vertices in the hierarchy.

\begin{lemma}\label{L:pods}
Fix $Z,T, q$ with $3q^2 < Z$, let $\mathcal{H}$ be a good hierarchy for the rectangle $R$ with root $r$ and let $N_s(\mathcal{H})$ be the number of vertices in $G_{\mathcal{H}}$ that are splitters.  There exists a rectangle $S = S(\mathcal{H})$ with $S \subseteq R$ and 
\[
\dim(S) \leq \sum_{w \text{ seed}} \dim(R_w),
\]
and
\[
\sum_{\underset{u \text{ normal}}{u \to v}} U(R_v, R_u) \geq U(S,R) - 6N_s(\mathcal{H})q^2 g(Z).
\]
\end{lemma}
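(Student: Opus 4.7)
The plan is to prove the lemma by induction from the leaves of $G_{\mathcal{H}}$ upward, constructing for each vertex $u \in V(G_{\mathcal{H}})$ a rectangle $S_u \subseteq R_u$ satisfying the following two local versions of the conclusion: first, $\dim(S_u) \leq \sum_{w} \dim(R_w)$ where the sum is over the seeds in the subtree rooted at $u$; and second, $\sum U(R_{v'}, R_{u'}) \geq U(S_u, R_u) - 6 N_s(u) q^2 g(Z)$, where the sum ranges over edges $u' \to v'$ with $u'$ normal in the subtree rooted at $u$, and $N_s(u)$ counts the splitters in that subtree. Setting $S = S_r$ at the root then yields the lemma. The base case, with $u$ a seed, is handled by taking $S_u = R_u$, so that $U(S_u, R_u) = 0$, the sum on the left is empty, and $N_s(u) = 0$.

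For a normal vertex $u$ with unique child $v$, define $S_u = S_v$. The dimension bound is inherited since $u$ and $v$ have the same seed descendants. For the $U$-inequality, the edge $u \to v$ contributes $U(R_v, R_u)$; since $S_v \subseteq R_v \subseteq R_u$ gives an ordering of dimension vectors, Lemma \ref{L:Wtriangleineq} provides $U(S_v, R_v) + U(R_v, R_u) \geq U(S_v, R_u) = U(S_u, R_u)$, which when combined with the inductive estimate at $v$ delivers the required bound at $u$ (with $N_s(u) = N_s(v)$).

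For a splitter $u$ with children $v_1, v_2$, apply Lemma \ref{L:Wpod} with $z = 3 q^2$ (admissible since $3q^2 < Z$) to the data $\mathbf{a} = q^2 \dim(S_{v_1})$, $\mathbf{b} = q^2 \dim(R_{v_1})$, $\mathbf{c} = q^2 \dim(S_{v_2})$, $\mathbf{d} = q^2 \dim(R_{v_2})$, and $\mathbf{r} = q^2 \dim(R_u)$. The hypotheses hold: $\mathbf{a} \leq \mathbf{b}$ and $\mathbf{c} \leq \mathbf{d}$ follow from the inductive dimension bound together with $S_{v_i} \subseteq R_{v_i}$; $\mathbf{r} \geq \mathbf{b}, \mathbf{d}$ follows from $R_{v_i} \subseteq R_u$; $\mathbf{r} \geq (2Z, 2Z)$ because $u$ is a non-seed in a good hierarchy, so $\operatorname{short}(R_u) \geq 2Z/p^2 \geq 2Z/q^2$; and the closeness condition $\mathbf{r} \leq \mathbf{b} + \mathbf{d} + (z,z)$ holds because $R_u$ is internally spanned by $R_{v_1} \cup R_{v_2}$, so its dimensions exceed $\dim(R_{v_1}) + \dim(R_{v_2})$ by at most a small additive constant comfortably absorbed by $(3,3)$. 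The conclusion of Lemma \ref{L:Wpod} supplies $\mathbf{s}$ with $\mathbf{s} \leq \mathbf{r}$ and $\mathbf{s} \leq \mathbf{a} + \mathbf{c}$; let $S_u$ be any rectangle of dimensions $\mathbf{s}/q^2$ placed inside $R_u$. Summing the inductive dimension bounds at $v_1, v_2$ gives the dimension condition at $u$, and combining the lemma's inequality with the inductive $U$-estimates at $v_1, v_2$ yields the $U$-bound at $u$, where the extra $-2zg(Z) = -6q^2 g(Z)$ is exactly the budget allocated to the new splitter.

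For a splitter with three children, one child (say $v_3$) corresponds to a single site, so $S_{v_3} = R_{v_3}$ contributes no $U$-cost. Apply Lemma \ref{L:Wpod} to $S_{v_1}, S_{v_2}$ exactly as above with $\mathbf{r} = q^2 \dim(R_u)$; the triplet geometry (all three sites lie within distance $2$ of each other in the original infection scheme) again forces $\dim(R_u) \leq \dim(R_{v_1}) + \dim(R_{v_2}) + O(1)$, so the closeness condition holds with $z = 3q^2$, and the dimension bound $\dim(S_u) \leq \dim(S_{v_1}) + \dim(S_{v_2}) \leq \dim(S_{v_1}) + \dim(S_{v_2}) + \dim(R_{v_3})$ matches the seed-sum, giving the same cost $6q^2 g(Z)$ per splitter. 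The main obstacle is verifying the closeness condition $\mathbf{r} \leq \mathbf{b} + \mathbf{d} + (z,z)$ uniformly across all splitter configurations and for both two- and three-child splitters, which requires extracting the precise geometric bound on how much a parent rectangle can exceed the bounding box of its children when the children span it; once this is established, the induction closes and taking $u = r$ yields the pod $S$ of the lemma.
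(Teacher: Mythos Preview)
Your proposal is correct and follows essentially the same approach as the paper: induction over the hierarchy, with the base case $S=R$ at seeds, the normal-vertex step handled via Lemma~\ref{L:Wtriangleineq}, and the splitter step handled by applying Lemma~\ref{L:Wpod} with $z=3q^2$ to the two non-trivial children (ignoring a third single-site child when present). The paper phrases the induction top-down on $|V(G_{\mathcal{H}})|$ rather than bottom-up, but the argument is the same; the geometric bound $\dim(R_u)\le \dim(R_{v_1})+\dim(R_{v_2})+(3,3)$ that you flag as the main obstacle is exactly what the paper asserts (with the $(3,3)$ accounting for the possible triplet site) to verify the closeness hypothesis of Lemma~\ref{L:Wpod}.
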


\begin{proof}
The proof proceeds by induction on the number of vertices in $G_{\mathcal{H}}$.  If $|V(G_{\mathcal{H}})| = 1$, then take $S=R$.

If $|V(G_{\mathcal{H}})|>1$, consider separately the cases where the root $r$ is a normal vertex or a splitter.  If $r$ is a normal vertex with child $u$, let $\mathcal{H}'$ be the sub-hierarchy with root $u$ and apply the induction hypothesis to $\mathcal{H}'$ to get a rectangle $S' \subseteq R_u$.  The hierarchy $\mathcal{H}'$ has the same number of splitters as the hierarchy $\mathcal{H}$ and the same seeds.  Thus
$\dim(S') \leq \sum_{w \in V(G_{\mathcal{H}'}) \text{ seed}} \dim(R_w)$ and
\begin{align*}
\sum_{v \to w} U(R_w, R_v)
	&\geq U(S, R_u)+ U(R_u, R)  - 6N_s(\mathcal{H}')q^2g(Z)\\
	&\geq U(S,R) -  6N_s(\mathcal{H})q^2g(Z).		&&\hspace{-20pt}\text{(by Lemma \ref{L:Wtriangleineq})}
\end{align*}

Suppose now that $r$ is a splitter and let $u$ and $v$ be two children of $r$ that correspond to the two largest rectangles among the children of $r$, disregarding a third site that corresponds to a single site. Let $\mathcal{H}_1$ and $\mathcal{H}_2$ be the two sub-hierarchies with roots $u$ and $v$ respectively.  Then, since $r$ is a splitter, $N_s(\mathcal{H}) = N_s(\mathcal{H}_1)+N_s(\mathcal{H}_2)+1$.  Also, $\dim(R_u)+\dim(R_v) \geq \dim(R) - (3,3)$, accounting for the case when there is a third vertex that corresponds to a site in a triplet.

Let $S_1 \subseteq R_u$ and $S_2 \subseteq R_v$ be given by the induction hypothesis and for $i=1,2$ set $\mathbf{s}_i = q^2\dim(S_i)$, $\mathbf{r}_1 = q^2\dim(R_u)$, $\mathbf{r}_2 = q^2\dim(R_v)$ and $\mathbf{r} = q^2\dim(R)$.  By Lemma \ref{L:Wpod}, there exists $\mathbf{s} \leq \mathbf{r}$ with $\mathbf{s} \leq \mathbf{s}_1+ \mathbf{s}_2$ such that
\[
W_g(\mathbf{s}_1, \mathbf{r}_1)+ W_g(\mathbf{s}_2, \mathbf{r}_2) \geq W_g(\mathbf{s}, \mathbf{r}) - 2(3q^2)g(Z).
\] 
Let $S$ be a rectangle in $R$ of dimension $\frac{1}{q^2}\mathbf{s}$.  Then 
\begin{align*}
\dim(S) 	&\leq \dim(S_1) + \dim(S_2)\\
		& \leq \sum_{w \text{ seed in } \mathcal{H}_1}\dim(R_w) + \sum_{w \text{ seed in } \mathcal{H}_2}\dim(R_w)\\
		& = \sum_{w \text{ seed in } \mathcal{H}}\dim(R_w).   
\end{align*}
Also, by the choice of $\mathbf{s}$, $U(S_1, R_1) + U(S_2, R_2) \geq U(S,R) - 6q^2g(Z)$ and by the choice of $S_1$ and $S_2$,
\begin{align*}
\sum_{\underset{x \to y}{x,y \in V(\mathcal{H})}} &U(R_y, R_x)\\
	& = \sum_{\underset{x \to y}{x,y \in V(\mathcal{H}_1)}} U(R_y, R_x) + \sum_{\underset{x \to y}{x,y \in V(\mathcal{H}_1)}} U(R_y, R_x)\\
	&\geq U(S_1, R_u) - 6q^2 N_s(\mathcal{H}_1) g(Z) + U(S_2, R_v) - 6q^2 N_s(\mathcal{H}_1) g(Z)\\
	&\geq U(S, R) - 6q^2g(Z)- 6q^2 N_s(\mathcal{H}_1) g(Z)-6q^2 N_s(\mathcal{H}_2) g(Z)\\
	&= U(S,R) - (N_s(\mathcal{H}_1)+ N_s(\mathcal{H}_2)+1)6q^2 g(Z)\\
	&=U(S,R) - 6q^2N_s(\mathcal{H})g(Z).
\end{align*}
By induction, the result holds for all good hierarchies, $\mathcal{H}$.
\end{proof}

Using the notion of pods, the following upper bound is given on the probability that squares of a particular size are internally spanned.  Recall from Section \ref{S:2tiles} that $\lambda_B$ is used to denote $\int_{1/B}^{B}g(x)\ dx \leq \lambda$.

\begin{theorem}\label{T:ubspan}
For every $\varepsilon >0$, there is a $B_0 = B_0(\varepsilon)>0$ such that for $B>B_0$ there exists $p_0 = p_0(\varepsilon, B)$ such that if $0< p< p_0$ then
\[
I(\lfloor B/p^2 \rfloor, p) \leq \exp\left(\frac{\varepsilon - 2(1-7/B)\lambda_B}{p^2} \right).
\]
\end{theorem}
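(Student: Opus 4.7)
The plan is to adapt the hierarchy-based argument of Holroyd \cite{aH03} to the recovery process, using the infrastructure built in the preceding lemmas. Given $\varepsilon > 0$, I would first fix the parameters in the order: $B$ large enough that $7/B$ is small compared to $\varepsilon/\lambda$; then $c = O(1/B)$ so that $6c \leq 6/B$; then $Z$ very small depending on $(B,\varepsilon)$; then $T = T(B,Z,c)$ as demanded by Lemma \ref{L:D(R,R')}; and finally $p_0$ small enough that all previous lemmas apply and $q^2 \approx p^2$ to within any desired precision. Set $R = [\lfloor B/p^2 \rfloor]^2$. If $R$ is internally spanned, the preceding proposition produces a good hierarchy $\mathcal{H}$ for $(Z,T,p)$ that occurs, and a union bound over good hierarchies together with the counting estimate \eqref{E:hierarchy_num} gives
\[
I(\lfloor B/p^2 \rfloor, p) \leq M^{M-1} B^{4M} p^{-8M} \max_{\mathcal{H}} \prob(\mathcal{H} \text{ occurs}),
\]
where $M = M(B,T) = 3^{8B/T+2}$. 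The prefactor is a fixed polynomial in $1/p$ and is absorbed into $\exp(\varepsilon/(3p^2))$ for $p$ small.

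For a fixed good hierarchy, the van den Berg--Kesten inequality \eqref{E:vdBK-ineq} (applied to disjointly occurring events) bounds
\[
\prob(\mathcal{H} \text{ occurs}) \leq \prod_{w\text{ seed}}\prob(J(R_w))\prod_{\substack{u\to v\\u \text{ normal}}}\prob(D(R_v, R_u)).
\]
Each normal-edge factor is controlled by Lemma \ref{L:D(R,R')}: if $\dim(R_v) = (m,n)$ and $\dim(R_u) = (m+s,n+t)$ with $s+t \leq T/p^2$, then
\[
\prob(D(R_v,R_u)) \leq 3(1+pQ_1)^{\frac{12}{11}c(s+t)+4}\exp\!\bigl(16g(Z) - (1-6c)U(R_v,R_u)/q^2\bigr).
\]
The seed probability $\prob(J(R_w))$ is bounded by considering disjoint $4$-row (or $4$-column) strips across the long side of $R_w$: each strip demands a pair of initially infected sites within distance $2$, an event whose probability is at most $1-\exp(-C\,\text{short}(R_w)\,q^2)$, yielding a product estimate that I expect to match the tail of the integral of $g$ near the origin. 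Accumulating the $\log 3$, $16g(Z)$, and $4\log(1+pQ_1)$ contributions over at most $M$ vertices adds only an $O(1)$ term to the exponent, while the geometric factor $(1+pQ_1)^{\frac{12c}{11}\sum(s+t)}$ is at most $\exp(O(cBQ_1/p)) = o(1/p^2)$; both are absorbed into $\exp(\varepsilon/(3p^2))$.

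The heart of the argument is Lemma \ref{L:pods}, which produces a pod rectangle $S \subseteq R$ with $\dim(S) \leq \sum_{w \text{ seed}}\dim(R_w)$ and
\[
\sum_{\substack{u \to v\\u \text{ normal}}} U(R_v, R_u) \geq U(S,R) - 6Mq^2 g(Z).
\]
Setting $\mathbf{s} = q^2 \dim(S)$ and $\mathbf{r} = q^2\dim(R)$, Lemma \ref{L:Wint} gives $U(S,R) \geq 2q^2\int_{\mathbf{s}_1+\mathbf{s}_2}^{q^2\lfloor B/p^2\rfloor} g(x)\,dx$. Because each seed has short side less than $2Z/p^2$, choosing $Z$ small enough (depending on $M = M(B,T)$) forces $\mathbf{s}_1 + \mathbf{s}_2 \leq 2MZ \leq 1/B$; and $q^2 \lfloor B/p^2\rfloor \geq B$ for $p$ small, so the integral is at least $\lambda_B$. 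Combining with the $(1-6c)$ prefactor gives a main exponent of at most $-2(1-6c)\lambda_B/p^2$; taking $c \leq 1/(6B)$ then yields $-2(1-7/B)\lambda_B/p^2$, the extra $1/B$ absorbing the small correction between $q^2\lfloor B/p^2\rfloor$ and $B$ and the seed contribution.

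The main obstacle is the bookkeeping: the parameters $(B, c, Z, T, p_0)$ must be chosen in this specific order so that every error --- the polynomial prefactor in $1/p$, the $(1+pQ_1)^{O(\cdot)}$ factor, the additive $16Mg(Z)$ loss, the correction between $q^2\lfloor B/p^2\rfloor$ and $B$, and the seed contribution --- is simultaneously at most $\varepsilon/(3p^2)$, while the multiplicative $1-6c$ loss reduces the coefficient of $\lambda_B$ by only $7/B$. A secondary subtlety is matching the seed event $J(R_w)$ to the $g$-integral machinery: either one shows its probability is so small that it can be absorbed outright, or one extends Lemma \ref{L:Wint} slightly so that the integrand contribution from the pod-endpoint to zero is captured by the seed factors.
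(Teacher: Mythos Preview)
Your overall architecture is right and matches the paper's, but there is a genuine gap in how you handle the seed/pod interaction.

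You claim that ``each seed has short side less than $2Z/p^2$, so choosing $Z$ small (depending on $M$) forces $\mathbf{s}_1+\mathbf{s}_2 \leq 2MZ \leq 1/B$.'' This fails on two counts. First, the good-hierarchy definition bounds only $\text{short}(R_w)$ for a seed $w$; the \emph{long} side of a seed may be as large as $B/p^2$, so $\sum_w \dim(R_w)$ --- and hence $\dim(S)$ --- need not be small at all. Second, even if one tried to push $Z$ down, there is a circularity: $T = T(B,Z,c)$ shrinks (doubly exponentially) as $Z\to 0$ via Lemma~\ref{L:D(R,R')}, so $M = 3^{8B/T+2}$ blows up far faster than $Z$ shrinks, and $MZ\to\infty$ rather than $0$. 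Your final paragraph flags the seed issue as a ``secondary subtlety,'' but it is in fact the crux, and neither of your two suggested resolutions is carried out.

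The paper's fix is a dichotomy on $\phi(S)$. The key observation is that $J(R_w)$ demands a pair of close infected sites in each block of four columns along the \emph{long} side of $R_w$, giving $\prob(J(R_w)) \leq (22Z)^{\lfloor \text{long}(R_w)/4\rfloor} \leq (22Z)^{\phi(R_w)/8-1}$; multiplying over seeds and using $\phi(S)\leq \sum_w \phi(R_w)$ yields a factor $(22Z)^{\phi(S)/8}$ (up to an $N_0(\mathcal{H})$ correction). Now split: if $q^2\phi(S)\leq 1/B$, Lemma~\ref{L:Wint} gives $U(S,R)\geq 2\lambda_B$ directly; if $q^2\phi(S)>1/B$, one chooses $Z$ so that $-\log(22Z)/(8B)\geq 2\lambda$, and the seed factor alone already furnishes the exponent $-2(1-6c)\lambda_B/q^2$. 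This choice of $Z$ depends only on $B$ and $\lambda$, not on $M$, so there is no circularity. Replace your pod-size argument with this two-case split and the rest of your outline goes through.
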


\begin{proof}
Fix $\varepsilon>0$,  $B>1$ and $p>0$.  Set $n = \lfloor B/p^2 \rfloor$,  $c=1/B$, fix $Z>0$ and let $T$ be given by Lemma \ref{L:D(R,R')}.  By the van den Berg-Kesten inequality (inequality \eqref{E:vdBK-ineq}), for any hierarchy $\mathcal{H}$ that is good for $Z$, $T$ and $p$ with respect to $[n]^2$, 
\[
\prob(\mathcal{H} \text{ occurs}) \leq \prod_{w \text{ seed}}\prob(J(R_w))\prod_{\underset{u \to v}{u \text{ normal}}} \prob(D(R_u, R_v)).
\]

Consider first the terms $\prob(D(R_v, R_u))$. Set $Q_1 = Q_1(B,Z,\lceil 11/c \rceil)$.  By Lemma \ref{L:D(R,R')}, for a normal vertex $u$ and $u \to v$ with $\dim(R_u) = (m+s, n+t)$ and $\dim(R_v) =(m,n)$,
\begin{align*}
\prob(D&(R_v, R_u))\\
	 &\leq 3(1+pQ_1)^{\frac{12}{11}c(s+t)+4} e^{16g(Z)-(1-6c)(sg(nq^2) + tg(mq^2))}\\
	&\leq 3(1+pQ_1)^{\frac{24}{11}cT/p^2 +4}\exp\left(16g(Z) - (1-6c)\frac{U(R_v, R_u)}{q^2}\right)\\
	&\leq \exp\left(\log 3 +\left(\frac{24cT}{11p^2} +4 \right)pQ_1 + 16g(Z) -(1-6c)\frac{U(R_v, R_u)}{q^2} \right)\\
	&\leq \exp\left(\frac{(\log 3 + (24cT/11+4)Q_1 + 16g(Z))}{p} - \frac{(1-6c)U(R_v, R_u) }{q^2} \right).
\end{align*}
Set $Q_2 = Q_2(B, Z) = \log 3 + (24cT/11+4)Q_1 + 16g(Z)$.  Then, $Q_2$ is a constant that depends only $B$ and $Z$, since $T$ and $c$ depend only on $B$ and $Z$ and
\begin{equation}\label{E:growingR}
\prob(D(R_v, R_u)) \leq \exp(Q_2/p)\exp\left(-(1-6c)\frac{U(R_u, R_v)}{q^2}\right).
\end{equation}
Let $N_1(\mathcal{H})$ be the number of normal vertices in $G_{\mathcal{H}}$ and let $N_0(\mathcal{H})$ be the number of seeds.  Recall that the number of vertices in the hierarchy $\mathcal{H}$ is at most $M = 3^{8B/T +2}$ and so $N_1(\mathcal{H})$ and $N_0(\mathcal{H})$ are both at most a constant that depends only on $B$ and $Z$, since $T$ depends on $B$ and $Z$.
%\begin{align*}
%\prob(&\mathcal{H} \text{ is satisfied})\\
%	&\leq (32Z)^{\sum_{w \text{ seed}} \phi(R_w)/8-3/4}\exp \left(N_1(\mathcal{H})Q_2/p - (1-6c)\sum_{\underset{u \to v}{u \text{ normal}}} \frac{U(R_u, R_v)}{q^2}\right).
%\end{align*}
Let $S$ be a pod rectangle for $\mathcal{H}$ given by Lemma \ref{L:pods}.  Then, $\dim(S) \leq \sum_{w \text{ seed}} \dim(R_w)$ and 
\[
\sum_{\underset{u \to v}{u \text{ normal}}} U(R_v, R_u) \geq U(S,R) - 6N_s(\mathcal{H}) q^2 g(Z).
\]
Combining this with inequality \eqref{E:growingR},
\begin{align*}
\prod_{\underset{u \to v}{u \text{ normal}}} &\prob(D(R_u, R_v))\\
	&\leq \prod_{\underset{u \to v}{u \text{ normal}}} \exp\left(\frac{Q_2}{p}  - \frac{(1-6c)U(R_u, R_v)}{q^2}\right)\\
	&=\exp\left(\frac{N_1(\mathcal{H})Q_2}{p} - \frac{(1-6c)}{q^2} \sum_{\underset{u \to v}{u \text{ normal}}} U(R_u, R_v)\right)\\
	&\leq \exp\left(\frac{N_1(\mathcal{H})Q_2}{p} - \frac{(1-6c)}{q^2}\left(U(S,R) - 6N_s(\mathcal{H})q^2g(Z) \right) \right)\\
	&\leq \exp\left(\frac{N_1(\mathcal{H})Q_2p + 6N_s(\mathcal{H})g(Z)q^2}{p^2} - (1-6c)\frac{U(S,R)}{q^2}\right).
\end{align*}
Let $p$ be small enough so that $N_1(\mathcal{H})Q_2p + 6N_s(\mathcal{H})g(Z)q^2 \leq \varepsilon/3$, then
\begin{equation}\label{E:growingsimplified}
\prod_{\underset{u \to v}{u \text{ normal}}} \prob(D(R_u, R_v)) \leq \exp\left(\frac{\varepsilon/3}{p^2} - (1-6c)\frac{U(S,R)}{q^2}\right).
\end{equation}

To estimate the probability of the events $J(R_w)$, suppose without loss of generality that the short side of $R_w$ is horizontal and consider $\lfloor \text{long}(R_w)/4 \rfloor$ disjoint sets of $4$ adjacent columns in $R_w$.  In one set of $4$ adjacent columns, each site has at most $11$ sites within distance $2$ and so there are at most $\frac{11 \cdot 4}{2}\ \text{short}(R_w) \leq 22 Z/p^2$ pairs of sites within distance $2$.  The probability that at least one of these pairs has both sites initially infected is at most $22Z$ and hence 
\[
\prob(J(R_w)) \leq (22Z)^{\lfloor \text{long}(R_w)/4\rfloor} \leq (22Z)^{\phi(R_w)/8 - 1}.
\]
Thus, using the fact that $\dim(S) \leq \sum_{w \text{ seed}} \dim(R_w)$,
\begin{align*}
\prod_{w \text{ seed}} \prob(J(R_w))
	&\leq (22Z)^{\sum_{w \text{ seed}} \phi(R_w)/8-3/4}\\
	&\leq (22Z)^{\phi(S)/8 - N_0(\mathcal{H})}\\
	&= \exp \left(\frac{\log(22Z) \phi(S)}{8} - \log(22Z) N_0(\mathcal{H})\right).
\end{align*}
Let $p$ be small enough so that $-\log(22Z) N_0(\mathcal{H}) \leq \frac{\varepsilon}{3p^2}$.  Then
\[
\prod_{w \text{ seed}} \prob(J(R_w)) \leq \exp\left(\frac{\log(22Z) \phi(S)}{8}+ \frac{\varepsilon/3}{p^2} \right)
\]
and so
\begin{align*}
\prob(\mathcal{H} \text{ occurs}) 
	&\leq \exp\left(\frac{\log(22Z) \phi(S)}{8}+ \frac{\varepsilon/3}{p^2} + \frac{\varepsilon/3}{p^2} - (1-6c)\frac{U(S,R)}{q^2} \right)\\
	&=\exp\left(\frac{2 \varepsilon/3}{p^2} + \frac{\log(22Z) \phi(S)}{8} -\frac{(1-6c)U(S,R)}{q^2} \right).
\end{align*}
%\begin{align*}
%\exp &\left(N_1(\mathcal{H})Q_2/p - (1-6c)\sum_{\underset{u \to v}{u \text{ normal}}} \frac{U(R_u, R_v)}{q^2}\right)\\
%	&\leq \exp\left(\frac{N_1(\mathcal{H})Q_2 + 6N_0(\mathcal{H})g(Z) - N_s(\mathcal{H})\log(22Z)}{p}  - \frac{(1-6c)U(S,R)}{q^2}\right).
%%\prob(&\mathcal{H} \text{ is satisfied})\\
%%	&\leq \exp \left(\log(22Z)\left( \frac{\phi(S)}{8} - N_s(\mathcal{H})\right)+  \frac{N_1(\mathcal{H})Q_2}{p}  - (1-6c) \frac{U(S,R)- 6N_s(\mathcal{H})q^2 g(Z)}{q^2} \right)\\
%%	&\leq \exp \left( \frac{\log(22Z) \phi(S)}{8} + \frac{N_1(\mathcal{H})Q_2 + 6N_s(\mathcal{H})g(Z) - N_s(\mathcal{H})\log(22Z)}{p}  - \frac{(1-6c)U(S,R)}{q^2}\right)\\
%\end{align*}

Consider two different cases, depending on the size of the semi-perimeter of the rectangle $S$.  

\noindent \textbf{Case 1}:  If $\phi(S) \leq \frac{1}{Bq^2}$, then applying Lemma \ref{L:Wint} with $q^2\phi(S) = A \leq 1/B$,
\[
U(S,R) = W_g(q^2\dim(S), q^2\dim(R)) \geq 2\int_{1/B}^B g(x)\ dx = 2\lambda_B
\]
and so 
\[
\exp \left(-\frac{(1-6c)}{q^2}U(S,R)\right) \leq \exp\left(-\frac{2(1-6c)}{q^2}\lambda_B\right).
\]
In this case,
\begin{align*}
\prob(\mathcal{H} \text{ occurs})
	&\leq \exp\left(\frac{2 \varepsilon/3}{p^2}  -\frac{(1-6c)U(S,R)}{q^2} \right)\\
	&\leq \exp\left(\frac{2\varepsilon/3}{p^2} -\frac{2(1-6c)\lambda_B}{q^2}\right).
\end{align*}

\noindent \textbf{Case 2}:  If, on the other hand, $\phi(S) > \frac{1}{Bq^2}$, then
\[
\frac{-\log(22Z)\phi(S)}{8} \geq \frac{-\log(22Z)}{8Bq^2}.
\]
Choose $Z>0$ to be small enough so that
\[
\frac{-\log(22Z)}{8B} \geq 2\lambda \geq 2(1-6c) \lambda_B.
\]
Then,
\begin{align*}
\prob(\mathcal{H} \text{ occurs})
	& \leq \exp\left(\frac{2 \varepsilon/3}{p^2} + \frac{\log(22Z) \phi(S)}{8} \right)\\
	& \leq \exp\left(\frac{2\varepsilon/3}{p^2} -\frac{2(1-6c)\lambda_B}{q^2}\right).
\end{align*}

%Set $Q_3 = Q_3(B,Z) = N_1(\mathcal{H})Q_2 + 6N_s(\mathcal{H})g(Z) - N_0(\mathcal{H})\log(22Z)$.  Then, $Q_3$ is a constant, depending only on $B$ and $Z$ and
%\[
%\prob(\mathcal{H} \text{ is satisfied}) \leq \exp \left(\frac{Q_3}{p} -\frac{2(1-6c)\lambda_B}{p^2} \right).
%\]

Finally, recall that for $M = 3^{8B/T+2}$, there are at most $M^{M-1}B^{4M}p^{-8M}$ different good hierarchies $\mathcal{H}$.  Let $p$ be small enough so that $M^{M-1}B^{4M}p^{-8M} \leq \exp\left(\frac{\varepsilon/3}{p^2}\right)$.  Then,
\begin{align*}
I(L, p) 
	&\leq \sum_{\mathcal{H}} \prob(\mathcal{H} \text{ occurs})\\
	&\leq M^{M-1}B^{4M}p^{-8M} \exp\left(\frac{2\varepsilon/3}{p^2} -\frac{2(1-6c)\lambda_B}{q^2}\right)\\
	&=\exp\left(\frac{\varepsilon/3}{p^2} + \frac{2\varepsilon/3}{p^2}- \frac{2(1-6c)\lambda_B}{q^2} \right)\\
	&=\exp\left(\frac{\varepsilon}{p^2} - \frac{2(1-6c)\lambda_B}{q^2} \right).
\end{align*}
Let $p$ be small enough so that $\frac{q^2}{p^2} \leq \frac{1-6c}{1-7c}$.  Then,
\[
I(L,p) \leq \exp\left(\frac{\varepsilon - 2(1-7c)\lambda_B}{p^2} \right).
\]
\end{proof}

In order to extend Theorem \ref{T:ubspan} to give an upper bound on the probability that an arbitrarily large rectangle percolates, the following lemma is used.  If a large rectangle $R$ is internally spanned, it might not be possible to guarantee that $R$ will contain internally spanned squares of a particular scale, but the following shows that it is at least possible to guarantee the existence of internally spanned rectangles of a particular scale.  Lemma \ref{L:smallerspan} is an immediate analogue to a result on usual bootstrap percolation given in \cite{AL88}.

\begin{lemma}\label{L:smallerspan}
Fix a rectangle $R$, $k \in \mathbb{N}$  with $\text{long}(R) \geq 2k$, and $X_0 \subseteq R$.   If $R$ is internally spanned by $X_0$, then there exists a rectangle $T \subseteq R$ with $\text{long}(T) \in [k, 2k]$ that is internally spanned by $X_0$.
\end{lemma}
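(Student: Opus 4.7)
The plan is to induct on $\text{long}(R)$, descending a fine hierarchy produced by Proposition \ref{P:finetree}. If $\text{long}(R) = 2k$, setting $T := R$ gives the conclusion immediately, so assume $\text{long}(R) > 2k$. I would invoke Proposition \ref{P:finetree} to obtain a hierarchy $\mathcal{H} = (G, \{R_u\}_{u \in V(G)})$ whose root rectangle is $R$, whose seeds are single sites of $X_0 \cap X_0^+$, and in which every non-seed vertex has out-degree at least two. The proposition's bottom-up construction merges rectangles as soon as they lie within $\ell_1$-distance $2$; this matches how doubles and triplets of sites persist and propagate under $\mathcal{R}$, so an induction up the tree shows that each intermediate rectangle $R_u$ is itself internally spanned by $X_0 \cap R_u$.

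The key geometric input is that whenever a parent rectangle $R_u$ is obtained by merging two (or three, in the triplet case, one of which is a single site) children $R_{v_1}, R_{v_2}, \ldots$, one has
\[
\text{long}(R_u) \le \text{long}(R_{v_1}) + \text{long}(R_{v_2}) + C
\]
for an absolute constant $C$ depending only on the merging gap (at most $2$) and the bounded diameter of any triplet. Starting at the root, I would descend the tree, at each step moving to the child $v$ with the largest $\text{long}(R_v)$. Whenever $\text{long}(R_u) \ge 2k+1$, the estimate yields $\max_i \text{long}(R_{v_i}) \ge \lceil (\text{long}(R_u)-C)/2 \rceil \ge k$ provided $k$ exceeds an absolute constant (the finitely many small values of $k$ are handled by direct inspection). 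Thus the descent preserves the invariant that the current rectangle $R_u$ is IS by $X_0 \cap R_u$ with $\text{long}(R_u) \ge k$, while $\text{long}(R_u)$ strictly decreases. As soon as the long side first falls to at most $2k$, I stop and take $T := R_u$.

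The hard part will be the careful bookkeeping of the additive constant $C$ -- especially through triplet merges, where three children are joined simultaneously -- together with verifying that the merging construction in Proposition \ref{P:finetree} really does produce sub-rectangles $R_u$ internally spanned by $X_0 \cap R_u$ in the non-monotone process $\mathcal{R}$, rather than merely ``spanned'' in the formal sense built into the hierarchy definition. Both points reduce to checking that a double or triplet configuration bridging two smaller spanned rectangles actually fills the connecting gap under $\mathcal{R}$, which is already the content of the case analysis used to build the fine tree in Proposition \ref{P:finetree}.
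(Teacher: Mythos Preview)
The paper does not actually prove this lemma; it states only that it ``is an immediate analogue to a result on usual bootstrap percolation given in \cite{AL88}''. Your approach via the fine hierarchy of Proposition~\ref{P:finetree} is exactly the standard Aizenman--Lebowitz argument the paper has in mind, so you are on the right track. One minor point: ascending from a seed and stopping at the first vertex with $\text{long}(R_u)\ge k$ makes the bookkeeping for the interval $[k,2k]$ cleaner than descending from the root.

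Your flagged ``hard part'' is a genuine subtlety, though, and your proposed resolution does not quite cover it. It is \emph{not} true that every intermediate $R_u$ in the fine tree is internally spanned in $\mathcal{R}$: a diagonal $2$-tile $\{\mathbf{x},\mathbf{x}+(1,1)\}$ does not internally span its $2\times2$ bounding square, since the $\mathcal{R}$-process on that square oscillates between the two diagonals forever. So the case analysis underlying Proposition~\ref{P:finetree} does not by itself give what you need. The correct fix uses two facts you have not invoked: the map $Y\mapsto\mathcal{R}(Y)$ is monotone in $Y$ (even though $t\mapsto\mathcal{R}^{(t)}(Y)$ is not monotone), and any fully infected rectangle with at least two sites is stable under $\mathcal{R}$. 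Together these give that if both children of a splitter are already internally spanned and stable, then in the parent's process they eventually fill and remain full, after which $\mathcal{R}$ coincides with $\mathcal{B}$ and the hull is spanned. The exceptional small configurations at the bottom of the tree (single sites, diagonal tiles) are absorbed into the first stable rectangle above them, which costs only a bounded additive constant and is harmless once $k$ exceeds an absolute constant --- exactly the regime you already single out.
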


While Theorem \ref{T:ubspan} gives an upper bound on the probability of percolation for any large enough rectangle and small enough probability of initial infection, it remains to show how this can be used to give a bound on the critical probability.

%Finally, as in the case for usual bootstrap percolation, Theorem \ref{T:ubspan} can be used to show the following bound on the critical probability.

\begin{theorem}\label{T:ublarge}
For every $\varepsilon >0$, there exists $n_0 = n(\varepsilon)$ such that for every $n \geq n_0$, if $p<0$ is such that $p \leq \sqrt{\frac{\lambda-\varepsilon}{\log n}}$, then
\[
I(n, p) \leq n^{-\frac{\varepsilon}{2(\lambda-\varepsilon)}}.
\]
\end{theorem}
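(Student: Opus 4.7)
The plan is to combine Theorem~\ref{T:ubspan} with an Aizenman--Lebowitz style reduction based on Lemma~\ref{L:smallerspan}. Given $\varepsilon>0$, I first choose $B=B(\varepsilon)$ large enough that $2(1-7/B)\lambda_B\geq 2\lambda-\varepsilon/2$, which is possible because $\lambda_B\nearrow\lambda$ as $B\to\infty$ and $(1-7/B)\to 1$. Applying Theorem~\ref{T:ubspan} with error parameter $\varepsilon/2$ then gives, for all sufficiently small $p$,
\[
I(\lfloor B/p^2\rfloor,p)\ \leq\ \exp\!\left(\frac{\varepsilon/2-2(1-7/B)\lambda_B}{p^2}\right)\ \leq\ \exp\!\left(\frac{-2\lambda+\varepsilon}{p^2}\right).
\]

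Next I use Lemma~\ref{L:smallerspan} to reduce spanning of $[n]^2$ to spanning of a critical droplet of size $\Theta(p^{-2})$. Setting $k=\lfloor B/(4p^2)\rfloor$ so that $2k\leq\lfloor B/p^2\rfloor$, if $[n]^2$ is internally spanned then there is an internally spanned rectangle $T\subseteq[n]^2$ with $\text{long}(T)\in[k,2k]$ (assuming $n\geq 2k$; otherwise the theorem reduces to Theorem~\ref{T:ubspan}). The number of such rectangles is at most $n^2(2k)^2\leq B^2 n^2/(4p^4)$. For each such $T$, the probability that $T$ is internally spanned is at most $\exp((-2\lambda+\varepsilon)/p^2)$: the hierarchy argument in Theorem~\ref{T:ubspan} extends verbatim to any rectangle $R$ with $\text{long}(R)\leq B/p^2$, since the counting bound \eqref{E:hierarchy_num}, the pod construction of Lemma~\ref{L:pods}, and the Wulff estimate of Case~1 via Lemma~\ref{L:Wint} (using Lemma~\ref{L:Wtriangleineq} to route off-diagonal targets through $(B,B)$, absorbing the constant loss into the allowed slack) never used squareness beyond the size constraint. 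A union bound yields
\[
I(n,p)\ \leq\ \frac{B^2 n^2}{4p^4}\,\exp\!\left(\frac{-2\lambda+\varepsilon}{p^2}\right).
\]

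Taking logarithms, using the hypothesis $p^2\leq(\lambda-\varepsilon)/\log n$ so that $1/p^2\geq\log n/(\lambda-\varepsilon)$ and $\log(1/p)=O(\log\log n)$,
\begin{align*}
\log I(n,p) &\leq 2\log n+O(\log\log n)+\frac{-2\lambda+\varepsilon}{p^2}\\
&\leq \log n\cdot\frac{2(\lambda-\varepsilon)+(-2\lambda+\varepsilon)}{\lambda-\varepsilon}+O(\log\log n)\\
&= -\frac{\varepsilon\log n}{\lambda-\varepsilon}+O(\log\log n).
\end{align*}
For $n$ large enough the $O(\log\log n)$ error is absorbed into $\varepsilon\log n/(2(\lambda-\varepsilon))$, giving $I(n,p)\leq n^{-\varepsilon/(2(\lambda-\varepsilon))}$, which is the desired conclusion.

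The main obstacle I anticipate is justifying the rectangle version of Theorem~\ref{T:ubspan}. For rectangles with $\text{short}(T)\geq 2Z/p^2$ the hierarchy machinery literally copies over, and the only change is replacing the square target $(B,B)$ in the Wulff estimate by a diagonal point reached through Lemma~\ref{L:Wtriangleineq}, at a cost that is a $B$-dependent constant and thus absorbed by enlarging $\varepsilon'$. The remaining ``thin'' case $\text{short}(T)<2Z/p^2$ requires a separate direct estimate: such a $T$ can be internally spanned only under near-dense initial infection along its long axis, which occurs with probability at most $\exp(-c\cdot\text{long}(T)\cdot p)$ for any fixed $c$ once $p$ is small enough, contributing negligibly to the union bound.
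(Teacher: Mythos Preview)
Your overall architecture---Aizenman--Lebowitz reduction via Lemma~\ref{L:smallerspan} followed by a union bound over rectangles $T$ with $\text{long}(T)\in[k,2k]$---matches the paper. The difference is in how $I(T,p)$ is bounded for a general (possibly non-square, possibly thin) rectangle $T$. You propose to re-run the hierarchy argument of Theorem~\ref{T:ubspan} directly for $T$, routing the Wulff target through $(B,B)$ via Lemma~\ref{L:Wtriangleineq} and treating thin rectangles separately. The paper instead sidesteps all of this with an embedding trick: it observes that if $T$ is internally spanned and, in addition, each of the relevant rows and columns of $[K]^2\setminus T$ contains a pair of adjacent initially infected sites, then $[K]^2$ is internally spanned. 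Since those extra events are independent of the spanning of $T$ and have joint probability at least $\exp(-4Be^{-B/4+1}/p^2)$, one obtains $I(T,p)\leq I(K,p)\exp(4Be^{-B/4+1}/p^2)$ and then invokes Theorem~\ref{T:ubspan} \emph{only} for the square $[K]^2$. This avoids both the off-diagonal Wulff estimate and any case analysis on $\text{short}(T)$.

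Your routing argument does in fact work: since $\text{long}(T)\geq k\approx B/(4p^2)$, one coordinate of $q^2\dim(T)$ is at least roughly $B/4$, and then by Lemma~\ref{L:oneline} one has $W_g(q^2\dim(T),(B,B))\leq B\bigl(g(B/4)+g(B)\bigr)$, which tends to $0$ as $B\to\infty$ and is therefore absorbable into the $\varepsilon$-slack. Your thin-case bound, however, is misstated. You wrote $\exp(-c\cdot\text{long}(T)\cdot p)$, which would give only $\exp(-c'/p)$ and is far too weak to beat the union-bound cost $n^2/p^4$ when compared against the target $\exp(-2\lambda/p^2)$. The correct bound, coming from the seed estimate in the proof of Theorem~\ref{T:ubspan}, is of the form $(CZ)^{c\,\text{long}(T)}=\exp\bigl(-c'\,\text{long}(T)\bigr)=\exp(-c''/p^2)$ for constants depending on $B$ and $Z$, which \emph{is} sufficient once $Z$ is chosen small. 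A second minor point: your step $\log(1/p)=O(\log\log n)$ assumes $p$ is not far below $\sqrt{(\lambda-\varepsilon)/\log n}$; this is harmless once you first reduce to the extremal $p=\sqrt{(\lambda-\varepsilon)/\log n}$ by monotonicity, as the paper does. With these corrections your argument is valid, though the paper's embedding device is shorter and cleaner.
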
 

\begin{proof}
Fix $\varepsilon>0$ and let $B = B(\varepsilon)$ and $p_0 = p_0(\varepsilon)$ be given by Theorem \ref{T:ubspan} and with $B$ large enough so that $\lambda-(1-7/B)\lambda_B< \varepsilon/12$.  Let $n_0 = n_0(B, \varepsilon)$ be large enough so that $\sqrt{\frac{\lambda-\varepsilon}{\log n_0}}<p_0$ and if $n \geq n_0$, then  $n \geq \frac{B\log n}{\lambda-\varepsilon}$.

Fix $n > n_0$ and $p>0$ with $p \leq \sqrt{\frac{\lambda-\varepsilon}{\log n}}$.  Note that if $p \leq p'$, then by coupling, $I(n,p) \leq I(n,p')$ and so it suffices to prove the result assuming that $p =\sqrt{\frac{\lambda-\varepsilon}{\log n}}$.  By the choice of $n_0$, $n > \frac{B \log n}{\lambda-\varepsilon} = B/p^2$.  Set $R = [n]^2$.

Set $K = \lfloor B/p^2 \rfloor$ and $k = \lfloor B/2p^2 \rfloor$ so that $2k \leq K < n$.  By Lemma \ref{L:smallerspan}, if $R$ is internally spanned, then there is an internally spanned rectangle $T \subseteq R$ with $\text{long}(T) \in [k, 2k]$.  Thus,
\[
I(n, p) \leq \sum_{\underset{\text{long}(T) \in [k,2k]}{T \subseteq R}} I(T,p).
\]
In $R$, there are at most $n^2(2k)^2 \leq n^2K^2$ such rectangles $T$.  By the choice of $K$, and for $n$ sufficiently large,
\[
K^2 \leq \frac{B^2}{p^4} = \frac{B^2(\log n)^2}{(\lambda-\varepsilon)^2} \leq n^{\frac{\varepsilon}{6(\lambda-\varepsilon)}}.
\]

It remains to determine an upper bound on the probability of such a rectangle being internally spanned.  Fix such a rectangle $T$ of dimension $(a,b)$ and suppose without loss of generality that $a \leq b$ and that $T = [1,a]\times [1,b]$.  Consider one particular way in which the rectangle $[1,K]^2$ can be internally spanned.  The rectangle $[K]^2$ is internally spanned if $T$ is internally spanned and  every column of the rectangle $[a+1,K]\times [K]$ contains two adjacent initially infected sites and every row of the rectangle $[a]\times[b+1,K]$ contains two adjacent initially infected sites.  Since these events are all independent,
\begin{align*}
I(K,p)	&\geq I(T,p)(1-(1-p^2)^{\lfloor K/2\rfloor})^{K-b}(1-(1-p^2)^{\lfloor b/2 \rfloor})^{K-a}\\
				&\geq I(T,p)(1-e^{-p^2(K-1)/2})^{K}(1-e^{-p^2(k-1)/2})^{K}\\
				&\geq I(T,p) (1-e^{-p^2(k-1)/2})^{2K}\\
				&\geq I(T,p) (1-e^{-(B/4-1)})^{2K}\\
				&\geq I(T,p) \exp(-4Ke^{-B/4+1})		&&\text{(for $B\geq 5$)}\\
				&\geq I(T,p) \exp\left(\frac{-4Be^{-B/4+1}}{p^2}\right).
\end{align*}
Hence for any $T \subseteq R$ with $\text{long}(T) \in [k,2k]$, by Theorem \ref{T:ubspan} applied to $[K]^2$,
\[
I(T,p) 
	 \leq \exp\left(\frac{4Be^{-B/4+1}}{p^2}+ \frac{\varepsilon - 2(1-7/B)\lambda_B}{p^2}\right).
\]
Let $B$ be large enough so that $4B e^{-B/4+1} \leq \varepsilon/6$.  Since $(1-7/B)\lambda_B > \lambda - \frac{\varepsilon}{12}$,
\begin{align*}
I(T,p) 
	&\leq \exp\left(\frac{\varepsilon/6 + \varepsilon - 2(\lambda - \varepsilon/(12))}{p^2}\right)\\
	&=\exp\left(\frac{4\varepsilon/3 - 2\lambda}{p^2}\right)\\
	&=\exp\left(\frac{(4\varepsilon/3 - 2\lambda)\log n}{\lambda-\varepsilon}\right)\\
	&=n^{\frac{-(2\lambda - 4\varepsilon/3)}{\lambda-\varepsilon}} = n^{-2-\frac{2\varepsilon/3}{\lambda-\varepsilon}}.
\end{align*}
Therefore, the probability that $[n]^2$ is internally spanned can be bounded above as
\begin{align*}
I(n, p)	&\leq n^2 K^2 n^{-(2+2\varepsilon/3)}\\
		&\leq n^2 n^{\frac{\varepsilon}{6(\lambda-\varepsilon)}} n^{-2-\frac{2\varepsilon/3}{\lambda-\varepsilon}}\\
		&= n^{\frac{-\varepsilon}{2(\lambda-\varepsilon)}}
\end{align*}
as claimed.
\end{proof}

In particular, by Theorem \ref{T:ublarge} for each $\varepsilon>0$ and sequence $\{p(n)\}_{n \in \mathbb{N}}$ with the property that for each $n \in \mathbb{N}$,
\[
p(n) \leq \sqrt{\frac{\lambda-\varepsilon}{\log n}}
\]
then $I(n,p(n)) = o(1)$.

This implies that for every $\varepsilon>0$, there is an $n_{\varepsilon}$ such that for all $n \geq n_{\varepsilon}$,
\[
p_c([n]^2, \mathcal{R}) \geq \sqrt{\frac{\lambda - \varepsilon}{\log n}}.
\]
Combining Theorems \ref{T:ublarge} and \ref{T:lb}, this shows that the critical probability for the update rule $\mathcal{R}$ satisfies
\[
p_c([n]^2, \mathcal{R}) = \sqrt{\frac{\lambda + o(1)}{\log n}}.
\]

A remaining open problem would be to determine further terms in the expansion of the critical probability $p_c([n]^2, \mathcal{R})$ and to investigate variations of this process in other dimensions or on graphs other than the integer lattices.

\end{document}